\documentclass[reqno]{amsart}

\usepackage{enumerate}
\usepackage{tabto}
\usepackage[mathscr]{euscript}
\usepackage{xcolor}
\usepackage{layout}
\usepackage{fancyhdr}
\usepackage{array}
\usepackage{amsfonts}
\usepackage{amsmath}
\usepackage{amssymb}
\usepackage{mathtools}
\usepackage{graphicx}
\usepackage{bm}
\usepackage{enumitem}
\usepackage{caption} 
\usepackage{color}
\usepackage{kotex}
\usepackage{csquotes}
\usepackage{bookmark}
\usepackage{float}
\usepackage{multirow}
\usepackage[square,numbers,sort&compress]{natbib}
\usepackage{hyperref}
\hypersetup{colorlinks=true,linkcolor=blue,citecolor=red}
\allowdisplaybreaks

\def\Xint#1{\mathchoice
{\XXint\displaystyle\textstyle{#1}}%
{\XXint\textstyle\scriptstyle{#1}}%
{\XXint\scriptstyle\scriptscriptstyle{#1}}%
{\XXint\scriptscriptstyle\scriptscriptstyle{#1}}%
\!\int}
\def\XXint#1#2#3{{\setbox0=\hbox{$#1{#2#3}{\int}$ }
\vcenter{\hbox{$#2#3$ }}\kern-.6\wd0}}

\def\dashint{\Xint-}

\newtheorem{theorem}{Theorem}[section]
\newtheorem{lemma}[theorem]{Lemma}

\newtheorem{remark}[theorem]{Remark}
\theoremstyle{definition}
\newtheorem{definition}[theorem]{Definition}

\numberwithin{equation}{section}

\newcommand{ \mr }{ \mathbb{R} }

\newcommand{\iints}[1]{{\int\hspace{-0.28cm}\int_{#1}}}
\newcommand{\iintss}{{\int\hspace{-0.28cm}\int}}
\newcommand{ \miints }{{\iintss\hspace{-0.56cm} -\hspace{-0.15cm}-}}
\newcommand{\miint}[1]{{\miints_{\hspace{-0.13cm}#1}}}

\begin{document}
\title[Interpolative refinement of gap bounds]{Interpolative Refinement of Gap Bound Conditions for Singular Parabolic Double Phase Problems}

\author{Bogi Kim}\address{Department of Mathematics, Kyungpook National University, Daegu, 41566, Republic of Korea} \email{rlaqhrl4@knu.ac.kr} \author{Jehan Oh}\address{Department of Mathematics, Kyungpook National University, Daegu, 41566, Republic of Korea} \email{jehan.oh@knu.ac.kr}

\subjclass{Primary 35K67; Secondary 35D30, 35K55}
\date{\today.}
\keywords{singular parabolic equations, double phase problems, gap bound conditions, interpolation, higher integrability}
\thanks{This work is supported by National Research Foundation of Korea (NRF) grant funded by the Korea government [Grant Nos. RS-2023-00217116, RS-2025-00555316, RS-2025-25415411, and RS-2025-25426375].}

\begin{abstract}
We consider inhomogeneous singular parabolic double phase equations of type
$$
u_t-\operatorname{div}(|Du|^{p-2}Du + a(x,t)|Du|^{q-2}Du)=-\operatorname{div} (|F|^{p-2}F + a(x,t)|F|^{q-2}F)
$$
in $\Omega_T\coloneq \Omega \times (0,T)\subset \mathbb{R}^n\times \mathbb{R}$, where $\frac{2n}{n+2}<p\leq 2$, $p<q$ and $0\leq a(\cdot)\in C^{\alpha,\frac{\alpha}{2}}(\Omega_T)$.
We establish gradient higher integrability results for weak solutions to the above problems under one of the following two assumptions:
$$
u\in L^\infty (\Omega_T) \quad\text{and}\quad q\leq p +\frac{\alpha(p(n+2)-2n)}{4},
$$
or
$$
u\in C(0,T;L^s(\Omega)),\quad s\geq 2 \quad\text{and}\quad q\leq p+\frac{\alpha \mu_s}{n+s},
$$
where $\mu_s\coloneq \frac{(p(n+2)-2n)s}{4}$. These results yield an interpolation refinement of gap bounds in the singular parabolic double phase setting.
\end{abstract}
\maketitle

\section{\bf Introduction}\label{section 1}
In this paper, we investigate the local gradient higher integrability of weak solutions to inhomogeneous singular parabolic double phase problems with the model equation
\begin{equation}\label{main model equation}
    \begin{aligned}
        &u_t-\operatorname{div} (|Du|^{p-2}Du + a(x,t)|Du|^{q-2}Du)\\
        &\qquad\qquad\qquad =-\operatorname{div} (|F|^{p-2}F + a(x,t)|F|^{q-2}F) \qquad \text{in } \Omega_T\coloneq \Omega\times (0,T),
    \end{aligned}
\end{equation}
where $n\geq 2$, $\frac{2n}{n+2}<p \leq 2$, $p<q$, $T>0$, $\Omega$ is a bounded open set in $\mr^n$ and $a(\cdot)\in C^{\alpha,\frac{\alpha}{2}}(\Omega_T)$ is non-negative. Here, $a(\cdot)\in C^{\alpha,\frac{\alpha}{2}}(\Omega_T)$ means that $a(\cdot)\in L^\infty (\Omega_T)$ and there exists a H\"{o}lder constant $[a]_\alpha\coloneq [a]_{\alpha,\frac{\alpha}{2};\Omega_T}>0$ such that
$$
|a(x_1,t_1)-a(x_2,t_2)|\leq [a]_\alpha \max\left\{|x_1-x_2|^\alpha,|t_1-t_2|^\frac{\alpha}{2}\right\}
$$
for all $x_1,\,x_2\in\Omega$ and $t_1,\,t_2\in(0,T)$. The elliptic version of \eqref{main model equation} is as follows:
$$
\begin{aligned}
    -\operatorname{div} (|Du|^{p-2}Du + a(x)|Du|^{q-2}Du)=-\operatorname{div}(|F|^{p-2}F + a(x)|F|^{q-2}F) \qquad \text{in } \Omega.
\end{aligned}
$$
Here, $1<p\leq q$ and $0\leq a(\cdot)\in C^{\alpha}(\Omega)$ for some $\alpha \in (0,1]$. This equation is the Euler-Lagrange equation of
$$
W^{1,1}(\Omega)\ni w \mapsto \int_\Omega \left[\frac{1}{p}|Dw|^p +\frac{1}{q}a(x)|Dw|^q-\left\langle |F|^{p-2}F+a(x)|F|^{q-2}F, Dw\right\rangle\right]\, dx,
$$
and is called the elliptic double phase problem. It was first introduced in \cite{Zhikov1986,Zhikov1993,Zhikov1995,Zhikov1997} as an example exhibiting the Lavrentiev phenomenon and as a model explaining homogenization in strongly anisotropic materials. Furthermore, various variants of the double phase problem are used in a wide range of applied science fields, including transonic flows \cite{Bahrouni2019}, quantum physics \cite{Benci2000}, steady-state reaction–diffusion systems \cite{cherfils2005stationary}, image denoising and processing \cite{Charkaoui2024,Chen2006,Harjulehto2013,Harjulehto2021,Kbiri2014,Fang2010}, and heat diffusion in materials with heterogeneous thermal properties \cite{Arora2023}, and so on. To study regularity properties of the elliptic double phase problem, we need a condition relating the closeness of $p$ and $q$ to the H\"{o}lder exponent $\alpha$ of the modulating coefficient $a(\cdot)$, see for instance \cite{Mingione2021}. Indeed, according to \cite{Baroni2018,Colombo2015,Esposito2004,Colombo2015a}, when the gap bound condition either
\begin{equation}\label{cond : gap bound condition of Elliptic double phase problem}
\frac{q}{p}\leq 1 + \frac{\alpha}{n}
\end{equation}
or
\begin{equation}\label{cond : gap bound condition of bounded solution of Elliptic double phase problem}
u\in L^\infty (\Omega) \quad \text{and}\quad q\leq p+\alpha
\end{equation}
is satisfied, a weak solution $u$ and its gradient $Du$ are H\"{o}lder continuous. Also, Baroni-Colombo-Mingione \cite{Baroni2018} established that the gradient of $u$ is H\"{o}lder continuous under the assumption
$$
u\in C^{\gamma}(\Omega)\quad \text{and}\quad q\leq p +\frac{\alpha}{1-\gamma}\quad\text{with }\gamma\in (0,1).
$$
Furthermore, Ok \cite{Ok2020} proved that if
$$
u\in L^\gamma_{\operatorname{loc}}(\Omega) \quad \text{and}\quad q\leq p +\frac{\gamma \alpha}{n+\gamma}
$$
for $p\in (1,n)$ and $\gamma>\frac{np}{n-p}$, then a local quasi-minimizer $u$ of elliptic double phase problems is locally H\"{o}lder continuous. From these results, one may expect that imposing stronger regularity assumptions on $u$ allows one to relax the gap bound condition while still obtaining the same regularity results for $u$. In particular, the results in \cite{Ok2020} lead to an interpolation of the gap bound conditions. On the other hand, under the condition either \eqref{cond : gap bound condition of Elliptic double phase problem} or \eqref{cond : gap bound condition of bounded solution of Elliptic double phase problem}, a variety of regularity results have been studied. For instance, Baroni-Colombo-Mingione \cite{Baroni2015} and Ok \cite{Ok2017} established Harnack's inequality and H\"{o}lder continuity for weak solutions. Also, Baasandorj-Byun-Oh \cite{Baasandorj2020}, Colombo-Mingione \cite{Colombo2016} and De Filippis-Mingione \cite{DeFilippis2019} obtained Calder\'{o}n-Zygmund type estimates. In addition, various regularity results for elliptic double phase problems can be found in \cite{Byun2021,Byun2021a,Byun2017,Byun2020,Haestoe2022,Haestoe2022a,Kim2024a,Kim2026,Kim2025a,DeFilippis2019,2023DeFilippis}, and so on.

To discuss the regularity of weak solutions to the parabolic double phase problems, a gap bound condition is also required. In fact, for the degenerate parabolic double phase problems, i.e., when $p\geq 2$, Kim-Kinnunen-Moring \cite{2023_Gradient_Higher_Integrability_for_Degenerate_Parabolic_Double-Phase_Systems} established that the (spatial) gradient of the solution satisfies higher integrability results under the gap bound condition
\begin{equation}\label{cond : the degenerate case assumption}
q\leq p +\frac{2\alpha}{n+2}.
\end{equation}
Also, under \eqref{cond : the degenerate case assumption}, Kim-Kinnunen-S\"{a}rki\"{o} \cite{Wontae2023a} studied energy estimates and the existence theory for weak solutions, see also \cite{Chlebicks2019,Singer2016}. On the other hand, for the singular parabolic double phase problems, i.e., when $\frac{2n}{n+2}<p\leq 2$, Kim \cite{Wontae2024} and Kim-S\"{a}rki\"{o} \cite{Wontae2023b} obtained gradient higher integrability results and Calder\'{o}n-Zygmund type estimates under the gap bound condition
\begin{equation}\label{cond : the sigular case assumption}
q\leq p +\frac{\mu_2\alpha}{n+2},
\end{equation}
where $\mu_2 = \frac{p(n+2)-2n}{2}$. Also, H\"{a}sto-Ok \cite{2025Hasto} established gradient higher integrability results not only for both degenerate and singular cases, but also for problems with generalized Orlicz growth. In addition, regularity results on parabolic double phase problems can be found in \cite{Buryachenko2022,Kim2024,Kim2025,Sen2025}.

Now, we introduce the main equations and theorems. The main equations under consideration are of the form 
\begin{equation}\label{eq : the main equation}
    u_t-\operatorname{div}\mathcal{A}(z,Du)=-\operatorname{div}\mathcal{B}(z,F)\qquad \text{in }\Omega_T.
\end{equation}
Here, $\mathcal{A}:\Omega_T \times \mr^{n}\rightarrow \mr^{n}$ is a Carath\'{e}odory vector field satisfying that there exist constants $0<\nu\leq L <\infty$ such that
\begin{equation}    \label{cond : double phase bounded condition of integrand}
    \mathcal{A}(z,\xi)\cdot \xi\geq \nu (|\xi|^p+a(z)|\xi|^q)\quad \text{and}\quad |\mathcal{A}(z,\xi)|\leq L(|\xi|^{p-1}+a(z)|\xi|^{q-1})
\end{equation}
for all $z\in\Omega_T$ and $\xi\in \mr^n$. We also assume that $\mathcal{B}:\Omega_T\times\mr^n\rightarrow\mr^n$ is a Carath\'{e}odory vector field satisfying
\begin{equation}\label{cond : growth condition of source term}
    |\mathcal{B}(z,\xi)|\leq L(|\xi|^{p-1}+a(z)|\xi|^{q-1})
\end{equation}
for all $z\in\Omega_T$ and $\xi\in \mr^n$. For simplicity, we denote $H(z,\varkappa)\coloneq  \varkappa^p +a(z)\varkappa^q$ for $\varkappa\geq 0$ and $z\in\Omega_T$. The definition of a weak solution to \eqref{eq : the main equation} is as follows:
\begin{definition}
    A function $u:\Omega_T \rightarrow \mr$ with
    $$
    u\in C(0,T;L^2(\Omega))\cap L^{1}(0,T;W^{1,1}(\Omega))
    $$
    and
    $$
    \iints{\Omega_T} H(z,|Du|)\, dz <\infty
    $$
    is a weak solution to \eqref{eq : the main equation}, if 
    $$
    \iints{\Omega_T} (-u\cdot\varphi_t +\mathcal{A}(z,Du)\cdot D\varphi)\, dz=\iints{\Omega_T} \mathcal{B}(z,F)\cdot D\varphi\, dz
    $$
    holds for every $\varphi\in C_0^\infty (\Omega_T)$.
\end{definition}

We aim to prove gradient higher integrability results for bounded solutions to singular parabolic double phase problems, and to establish an interpolation result between \eqref{cond : the sigular case assumption} and the assumption on bounded solutions. Indeed, for the degenerate case, Kim-Oh \cite{kim2025boundedsolutionsinterpolativegap} established the interpolation of gap bound conditions. In this paper, we first assume that
\begin{equation}\label{cond : main assumption with infty}
    u\in L^\infty(\Omega_T)\quad\text{and}\quad q\leq p+\frac{\alpha (p(n+2)-2n)}{4}
\end{equation}
for some $\alpha \in (0,1]$. Unlike the degenerate case, this depends on the dimension. In fact, $\frac{p(n+2)-2n}{4}$ is the standard scaling deficit arising in singular parabolic $p$-Laplace problems, see for instance \cite[Section VIII]{1993_Degenerate_parabolic_equations_DiBenedetto}. According to \cite{kim2025boundedsolutionsinterpolativegap} and \cite{Wontae2025}, for degenerate parabolic double phase problems, under the condition
\begin{equation}\label{cond : degenerate bounded solution condition}
u\in L^\infty(\Omega_T)\quad\text{and}\quad q\leq p+\alpha,
\end{equation}
it was shown that weak solutions satisfy gradient higher integrability results and are locally H\"{o}lder continuous. We note that if $p=2$ in \eqref{cond : main assumption with infty}, then 
$$
q\leq p+\frac{\alpha (p(n+2)-2n)}{4} = p+\alpha.
$$
Therefore, \eqref{cond : main assumption with infty} and \eqref{cond : degenerate bounded solution condition} are connected in the sense that they coincide when $p=2$. When \eqref{cond : main assumption with infty} holds, we assume that the source term $F:\Omega_T\rightarrow\mr^n$ satisfies 
\begin{equation}\label{cond : source term with infty}
    H(\cdot,|F|) \in L^{\gamma_b}(\Omega_T), \quad \text{where } \gamma_b\coloneq \frac{n+2}{2}. 
\end{equation}
Now, to introduce our first main theorem, we write a collection of parameters as
$$
\begin{aligned}
    \operatorname{data}_b := &(n,p,q,\alpha,\nu,L,[a]_\alpha,\operatorname{diam}(\Omega), |\Omega_T|, \|u\|_{L^\infty(\Omega_T)}, \\
    &\quad \|H(z,|Du|)\|_{L^1(\Omega_T)}, \|H(z,|F|)\|_{L^{\gamma_b}(\Omega_T)}).
\end{aligned}
$$
\begin{theorem}\label{thm : main theorem for infty}
    Assume that \eqref{cond : main assumption with infty} and \eqref{cond : source term with infty} are satisfied, and let $u$ be a weak solution to \eqref{eq : the main equation}. Then there exist constants $\varepsilon_0=\varepsilon_0(\operatorname{data}_b)>0$ and $c=c(\operatorname{data}_b,$ $\|a\|_{L^\infty(\Omega_T)})>1$ such that
    $$
    \begin{aligned}
        \miint{Q_r(z_0)} H(z,|Du|)^{1+\varepsilon}\, dz&\leq c\left(\miint{Q_{2r}(z_0)} H(z,|Du|)\, dz\right)^{1+\frac{2q\varepsilon}{p(n+2)-2n}}\\
        &\qquad +c\left(\miint{Q_{2r}(z_0)} [H(z,|F|)+1]^{1+\varepsilon}\, dz\right)^{\frac{2q}{p(n+2)-2n}}
    \end{aligned}
    $$
    for every $Q_{2r}(z_0)\subset \Omega_T$ and $\varepsilon\in (0,\varepsilon_0)$.
\end{theorem}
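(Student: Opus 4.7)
The plan is to establish the reverse-Hölder inequality via an intrinsic-scaling Calderón--Zygmund stopping-time argument, following the general philosophy used for singular parabolic $p$-Laplace problems, but carefully coupled with the phase-separation technique characteristic of double phase functionals. The scaling deficit $\frac{p(n+2)-2n}{4}$, which appears explicitly in both the gap bound condition and the exponent $\frac{2q}{p(n+2)-2n}$ in the estimate, dictates the geometry of the intrinsic cylinders and must be traced through every step.

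First, I would derive a Caccioppoli-type energy inequality on general cylinders, then localize it to intrinsic cylinders of the form $Q_r^\lambda(z_0)=B_r(x_0)\times(t_0-\lambda^{2-p}r^2,\,t_0+\lambda^{2-p}r^2)$, with the coupling condition $\lambda^p\sim\miint{Q_r^\lambda}H(z,|Du|)\,dz$. Two separate regimes must be handled: a \emph{$p$-intrinsic regime} where $a(\cdot)$ is comparable to its infimum on the cylinder (so $H(z,|Du|)\approx|Du|^p$), and a \emph{$(p,q)$-intrinsic regime} where $a(z)$ is bounded below by a suitable threshold that interacts with $\lambda$. The Hölder continuity of $a(\cdot)$ ensures that in the $(p,q)$-regime the modulator is essentially frozen, reducing the problem to one with a constant coefficient in front of $|Du|^q$, up to an error controlled by $[a]_\alpha r^\alpha$.

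Second, the pivotal use of $u\in L^\infty(\Omega_T)$ enters when estimating the Caccioppoli right-hand side by a Sobolev--Poincaré type inequality adapted to the singular parabolic setting. In the standard approach, terms of the form $\miint(|u-\bar u|/\rho)^p$ must be converted back to gradient averages; boundedness of $u$ lets one trade a power of $|u-\bar u|$ against the pointwise bound $2\|u\|_{L^\infty}$, yielding a Sobolev--Poincaré inequality with reduced scaling exponent. Combined with the Hölder bound $a(z)\leq a(z_0)+[a]_\alpha r^\alpha$ in the $(p,q)$-regime, the extra $|Du|^{q-p}$ factor coming from the two-phase structure is controlled by a quantity of order $r^\alpha\lambda^{q-p}$; the gap bound $q\leq p+\frac{\alpha(p(n+2)-2n)}{4}$ is precisely what allows this factor to be reabsorbed into the intrinsic coupling $\lambda^{p}\sim\miint H(z,|Du|)$, closing the reverse Hölder chain.

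Third, I would apply a level-set stopping-time argument to the maximal function of $H(z,|Du|)$. For each large level $\mu$, a Vitali family of intrinsic cylinders is selected on which $\miint H(z,|Du|)\,dz=\mu$ but averages on concentric enlargements are smaller. The reverse Hölder estimate from the previous steps then gives, on each selected cylinder, control of $\miint H(z,|Du|)^{1+\varepsilon}$ by a power of $\mu$ times the averaged gradient energy plus a source contribution involving $H(z,|F|)^{1+\varepsilon}$. Integrating in $\mu$ via the superlevel-set identity produces the stated inequality, with the exponents $1+\frac{2q\varepsilon}{p(n+2)-2n}$ and $\frac{2q}{p(n+2)-2n}$ emerging directly from the intrinsic coupling $\lambda^p\sim\mu$ together with the conversion between $\lambda$ and $H$-averages.

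The main obstacle is the simultaneous management of three competing scalings: the singular parabolic $p$-scaling through $\lambda^{2-p}$, the double-phase scaling through the threshold separating the $p$- and $(p,q)$-intrinsic regimes, and the bounded-solution scaling that relaxes the Sobolev--Poincaré exponent. Verifying that the two regimes patch consistently, so that the reverse Hölder inequality holds with a single uniform exponent and that the gap bound is sharp for the allowed $\varepsilon$-range, is the delicate heart of the argument, and is where the specific dimensional form $\frac{p(n+2)-2n}{4}$, rather than the clean degenerate value $\frac{2}{n+2}$ appearing in \cite{kim2025boundedsolutionsinterpolativegap}, becomes unavoidable.
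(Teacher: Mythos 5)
Your overall architecture matches the paper's: intrinsic cylinders adapted to the singular scaling deficit, a stopping-time exhaustion with a Vitali covering, separate reverse H\"older inequalities in a $p$-intrinsic and a $(p,q)$-intrinsic regime, the use of $\|u\|_{L^\infty(\Omega_T)}$ to trade powers of $|u-u_{Q}|$ for powers of $\lambda$ and $\rho$, and the emergence of the exponents $1+\tfrac{2q\varepsilon}{p(n+2)-2n}$ and $\tfrac{2q}{p(n+2)-2n}$ from the coupling $\lambda_0^{(p(n+2)-2n)/2}\sim$ the averaged data. Up to this level of detail the strategy is the paper's.

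There is, however, one concrete missing idea, and it is precisely the step the paper singles out as its main new difficulty. Your two regimes are not exhaustive in the form you need them: freezing the coefficient in the $(p,q)$-intrinsic regime requires $\sup_{Q_{10\varrho_w}(w)}a(\cdot)\gtrsim[a]_\alpha\varrho_w^\alpha$, since otherwise the oscillation $[a]_\alpha\varrho_w^\alpha$ is not dominated by $a(w)$ and the comparability \eqref{cond : comparison in p,q-phase} fails. Hence a third configuration remains, namely $K\lambda_w^p\leq\sup a\cdot\lambda_w^q$ together with $\sup a\leq2[a]_\alpha(10\varrho_w)^\alpha$, in which neither regime's argument applies. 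The paper must and does exclude it (Lemma \ref{lem : no occurence with s=infty}, via the choice of $K$ in \eqref{def : definition of K and kappa}): the Caccioppoli inequality, the $L^\infty$ bound, the decay estimate of Lemma \ref{lem : decay estimate}, and, crucially, the source integrability $H(\cdot,|F|)\in L^{(n+2)/2}(\Omega_T)$ from \eqref{cond : source term with infty} combine to give $\varrho_w\leq c_b\lambda_w^{-(p(n+2)-2n)/4}$, which contradicts the $(p,q)$-phase inequality under the gap bound. Your sketch never invokes \eqref{cond : source term with infty} at all, which shows the argument cannot close as written: without it the term $\;\dashint\!\!\!\dashint H(z,|F|)\,dz$ on the right of the Caccioppoli inequality admits no bound of the form $c\varrho_w^{-4p/(p(n+2)-2n)}$, and the exclusion fails. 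A secondary imprecision: the $\rho^\alpha$-type error from the $q$-term is actually absorbed in the \emph{$p$-intrinsic} regime (where $a$ is merely small), and the quantity to absorb is $\rho^\alpha\lambda^{(2-p)(q-p)/2}\rho^{-(q-p)}$ rather than $\rho^\alpha\lambda^{q-p}$; only after inserting $\lambda\leq c\rho^{-(n+2)/\mu_2}$ and the identity \eqref{calculate mu_2} does the exponent become $\alpha-\tfrac{4(q-p)}{p(n+2)-2n}\geq0$. Absorbing $\rho^\alpha\lambda^{q-p}$ literally would force the strictly smaller gap $q-p\leq\tfrac{\alpha(p(n+2)-2n)}{2(n+2)}$.
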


Next, to establish an interpolation between \eqref{cond : the sigular case assumption} and \eqref{cond : main assumption with infty}, we assume that
\begin{equation}\label{cond : main assumption with s}
    u\in C(0,T; L^s(\Omega))\quad\text{and}\quad q\leq p+\frac{\alpha \mu_s}{n+s},\quad \text{where } \mu_s=\frac{(p(n+2)-2n)s}{4}
\end{equation}
for some $s\in[2,\infty)$ and $\alpha \in (0,1]$. 
We remark that our gap bound $q-p$ varies continuously from the baseline bound (at $s=2$) to the bounded-solution bound (as $s \to \infty$), yielding a genuine interpolation family.
For the degenerate case, Kim-Oh \cite{kim2025boundedsolutionsinterpolativegap} proved that under the assumption
\begin{equation}\label{cond : degenerate interpolation condition}
    u\in C(0,T;L^s(\Omega))\quad \text{and}\quad q\leq p+\frac{s\alpha}{n+s},
\end{equation}
weak solutions satisfy gradient higher integrability results, and used these results to describe an interpolation.
For the singular case, we note from $\frac{2n}{n+2}<p\leq 2$ that $\mu_s \leq s$, and so $q\leq p+1\leq 3$. Also, $\mu_s\searrow 0$ as $p\searrow \frac{2n}{n+2}$, and $\mu_s=s$ when $p=2$. Moreover, when $p=2$, \eqref{cond : main assumption with s} and \eqref{cond : degenerate interpolation condition} coincide. Lastly, we have $\frac{\alpha \mu_s}{n+s}=\frac{\alpha \mu_2}{n+2}$ for $s=2$, and $\frac{\alpha\mu_s}{n+s}\nearrow \frac{\alpha(p(n+2)-2n)}{4}$ as $s\rightarrow \infty$. Hence, the condition \eqref{cond : main assumption with s} serves as an interpolative condition that links \eqref{cond : the degenerate case assumption}, \eqref{cond : the sigular case assumption}, \eqref{cond : main assumption with infty}, \eqref{cond : degenerate bounded solution condition} and \eqref{cond : degenerate interpolation condition}. Furthermore, this justifies that the bounds in each of these conditions are natural. On the other hand, we impose a stronger assumption on $u$, in contrast to the standard requirement $u \in C(0,T;L^2(\Omega))$, the latter being the function space naturally arising from the presence of the time-derivative term $u_t$ in the definition of a weak solution. Unlike \cite{Ok2020}, the assumption here pertains to the time variable rather than the spatial one, and this is exactly what distinguishes the parabolic case from the elliptic case. Furthermore, under the assumption \eqref{cond : main assumption with s}, we assume that the source term $F:\Omega_T\rightarrow\mr^n$ satisfies
\begin{equation}\label{cond : source term with s}
    H(\cdot,|F|)\in L^{\gamma_s}(\Omega_T),\quad \text{where } \gamma_s\coloneq \frac{s(n+2)}{2(n+s)}.
\end{equation} 
We note that when $s=2$, we have $\gamma_2=1$, and hence $H(\cdot,|F|)$ is in $L^1(\Omega_T)$ as the assumption in \cite{Wontae2024}. We also remark that $\gamma_s\nearrow \gamma_b$ as $s\rightarrow \infty$. 

Now, to state our second main theorem, we write a collection of parameters as
$$
\begin{aligned}
    \operatorname{data}_s := &(n,p,q,s,\alpha,\nu,L,[a]_\alpha,\operatorname{diam}(\Omega), |\Omega_T|, \|u\|_{C(0,T;L^s(\Omega))}, \\
    &\quad \|H(z,|Du|)\|_{L^1(\Omega_T)}, \|H(z,|F|)\|_{L^{\gamma_s}(\Omega_T)}).
\end{aligned}
$$
\begin{theorem}\label{thm : main theorem for s<infty}
    Assume that \eqref{cond : main assumption with s} and \eqref{cond : source term with s} are satisfied, and let $u$ be a weak solution to \eqref{eq : the main equation}. Then there exist constants $\varepsilon_0=\varepsilon_0(\operatorname{data}_s)>0$ and $c=c(\operatorname{data}_s,$ $\|a\|_{L^\infty(\Omega_T)})>1$ such that
    $$
    \begin{aligned}
        \miint{Q_r(z_0)} H(z,|Du|)^{1+\varepsilon}\, dz&\leq c\left(\miint{Q_{2r}(z_0)} H(z,|Du|)\, dz\right)^{1+\frac{2q\varepsilon}{p(n+2)-2n}}\\
        &\qquad +c\left(\miint{Q_{2r}(z_0)} [H(z,|F|)+1]^{1+\varepsilon}\, dz\right)^{\frac{2q}{p(n+2)-2n}}
    \end{aligned}
    $$
    for every $Q_{2r}(z_0)\subset \Omega_T$ and $\varepsilon\in (0,\varepsilon_0)$.
\end{theorem}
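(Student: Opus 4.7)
The plan is to combine the intrinsic-scaling approach for singular parabolic $p$-growth developed in \cite{Wontae2024} with the $L^s$-interpolation technique introduced for the degenerate setting in \cite{kim2025boundedsolutionsinterpolativegap}. I fix a cylinder $Q_{2r}(z_0)\subset\Omega_T$ and work with intrinsic cylinders of the form $Q_\rho^\lambda(z_0) = B_\rho(x_0)\times (t_0-\lambda^{2-p}\rho^2, t_0+\lambda^{2-p}\rho^2)$, the natural cylinders for singular parabolic $p$-growth; the stopping-time level $\lambda$ will be calibrated so that the intrinsic average of $H(z,|Du|)$ behaves like $\lambda^p$ on $p$-intrinsic cylinders or like $a(z_0)\lambda^q$ on $q$-intrinsic cylinders. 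This choice immediately dictates the scaling deficit $\frac{2}{p(n+2)-2n}$ appearing in the final estimate.

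First I would establish a Caccioppoli-type inequality on $Q_\rho^\lambda$ by testing a Steklov mollified equation with $\eta^p(u-(u)_{B_\rho})$ for a cut-off $\eta$, obtaining control of
$$
\sup_t\int_{B_\rho}|u-(u)_{B_\rho}|^2\eta^2\,dx + \iints{Q_\rho^\lambda} H(z,|Du|)\eta^p\,dz
$$
by terms involving $H(z,|u-(u)_{B_\rho}|/\rho)$ and the source. The decisive point is to control the $q$-phase part $a(z)|u-(u)_{B_\rho}|^q/\rho^q$ without invoking $\|u\|_{L^\infty}$. Here I would use the parabolic Gagliardo-Nirenberg inequality
$$
\iints{Q_\rho^\lambda}|u-(u)_{B_\rho}|^{p\frac{n+s}{n}}\,dz \leq c\left(\sup_t\int_{B_\rho}|u-(u)_{B_\rho}|^s\,dx\right)^{p/n}\iints{Q_\rho^\lambda}|Du|^p\,dz,
$$
which is precisely the mechanism converting the weaker $C(0,T;L^s(\Omega))$ bound on $u$ into usable interpolative integrability. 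Combining this with the H\"older modulus $[a]_\alpha(2\rho)^\alpha$ of $a(\cdot)$ and the intrinsic relation between $\rho$, $\lambda$, and the average of $H(z,|Du|)$, the exponent arithmetic forces exactly the threshold $q-p\leq \frac{\alpha\mu_s}{n+s}$ with $\mu_s=\frac{(p(n+2)-2n)s}{4}$ to be the sharp bound that allows absorption of the $q$-phase contribution, explaining why $\mu_s$ carries both the singular deficit $p(n+2)-2n$ and the factor $s$.

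Next, on both $p$- and $q$-intrinsic cylinders (distinguished according to which term of $H$ dominates the intrinsic average), the Caccioppoli inequality combined with parabolic Sobolev-Poincar\'e on $Q_\rho^\lambda$ produces a reverse H\"older inequality
$$
\miint{Q_\rho^\lambda}H(z,|Du|)\,dz \leq c\left(\miint{Q_{2\rho}^\lambda}H(z,|Du|)^\theta\,dz\right)^{1/\theta} + c\miint{Q_{2\rho}^\lambda}[H(z,|F|)+1]\,dz
$$
for some $\theta\in(0,1)$. A Vitali-type covering argument on superlevel sets of $H(z,|Du|)$ adapted to the intrinsic geometry, followed by a Gehring-type self-improving lemma, then upgrades this to the $(1+\varepsilon)$-higher integrability with the exponent $1+\frac{2q\varepsilon}{p(n+2)-2n}$ stated in the theorem. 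The source-term assumption $H(\cdot,|F|)\in L^{\gamma_s}$ with $\gamma_s=\frac{s(n+2)}{2(n+s)}$ is exactly what the stopping-time argument requires under the interpolative $L^s$ control, and it consistently matches $\gamma_2=1$ (recovering \cite{Wontae2024}) and tends to $\gamma_b$ as $s\to\infty$ (recovering Theorem \ref{thm : main theorem for infty}).

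The main obstacle will be the interplay, already in the Caccioppoli step, between the singular time scaling $\lambda^{2-p}$ and the Gagliardo-Nirenberg interpolation. In the $L^\infty$ setting the factor $\|u\|_{L^\infty}^{q-p}$ decouples cleanly from the gradient, but under only $C(0,T;L^s(\Omega))$ the $L^s$ mass is tied to the gradient through the interpolation, so the intrinsic deficit directly enters the effective exponent. Verifying that all phase-switching terms remain absorbable on both $p$- and $q$-intrinsic cylinders, and that the threshold $q-p\leq \frac{\alpha\mu_s}{n+s}$ is truly the critical one, is the technically delicate part of the argument.
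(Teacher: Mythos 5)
Your outline reproduces the architecture of the paper's proof --- intrinsic cylinders adapted to the singular scaling, a Caccioppoli inequality, parabolic Gagliardo--Nirenberg interpolation converting the $C(0,T;L^s(\Omega))$ bound into usable integrability of the $q$-phase term, reverse H\"older inequalities on intrinsic cylinders, and a Vitali covering plus level-set argument --- so in broad strokes the approach is the same. There is, however, a concrete gap in your phase analysis. You split into ``$p$- and $q$-intrinsic cylinders according to which term of $H$ dominates,'' but the reverse H\"older inequality on a $(p,q)$-intrinsic cylinder requires the pointwise comparability $a(z)\sim a(z_0)$ there, which only follows when $\sup_{Q_{10\varrho_w}(w)}a(\cdot)\gtrsim \varrho_w^\alpha$ (see \eqref{cond : p,q-phase condition}). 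The stopping-time argument therefore produces a \emph{third} case, namely $K\lambda_w^p\leq \sup_{Q_{10\varrho_w}(w)}a(\cdot)\lambda_w^q$ together with $\sup_{Q_{10\varrho_w}(w)}a(\cdot)\leq 2[a]_\alpha(10\varrho_w)^\alpha$, and the entire point of the hypotheses \eqref{cond : main assumption with s} and \eqref{cond : source term with s} is to rule this case out: one must prove the decay $\varrho_w\lesssim \lambda_w^{-\mu_s/(n+s)}$ (Lemma \ref{lem : no occurence with s<infty}), which combined with the gap bound gives $K\lambda_w^p\leq 20[a]_\alpha\varrho_w^\alpha\lambda_w^q\lesssim \lambda_w^{q-\alpha\mu_s/(n+s)}\leq \lambda_w^p$, a contradiction for suitable $K$. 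This is precisely where the source-term condition $H(\cdot,|F|)\in L^{\gamma_s}(\Omega_T)$ is consumed (it is used nowhere else), and it is the step the authors single out as not following from the $s=2$ argument of the baseline case. Your proposal never isolates this case, so as written the $(p,q)$-phase reverse H\"older inequality is not justified.

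A secondary, more technical omission: the Gagliardo--Nirenberg step is not a single clean inequality. In the exclusion lemma one must treat $q\leq s$ and $q>s$ separately (the latter requiring DiBenedetto's multiplicative embedding), and in the estimate of the $q$-phase term of the Caccioppoli inequality one must split $2\leq s\leq 4$ from $s>4$ (introducing an auxiliary exponent $\tilde p\in(s-1,s)$) so that the interpolation exponents are admissible; moreover $\sup_t\dashint_{B^\lambda_\rho}|u|^s\,dx$ is controlled only through the measure $|B_\rho^\lambda|$ combined with the decay estimate of Lemma \ref{lem : decay estimate}, which is how the singular deficit actually enters the exponent arithmetic. These points are checkable but nontrivial, and your sketch does not indicate how the admissibility constraints are met.
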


\begin{remark}
    If we consider the above estimate for every $Q_{2r}(z_0)\subset \Omega_T$ with $0<r\leq 1$, then $\operatorname{diam}(\Omega)$ is not required among the parameters in $\operatorname{data}_b$ and $\operatorname{data}_s$. Moreover, if $\|H(z,|F|)\|_{L^1(\Omega_T)}$ is included in $\operatorname{data}_b$ and $\operatorname{data}_s$, then $|\Omega_T|$ in $\operatorname{data}_b$ and $\operatorname{data}_s$ can be removed, see the proof of Lemma \ref{lem : decay estimate}.
\end{remark}
\begin{remark}
    The gradient higher integrability results obtained in this work can be extended to parabolic double phase systems under analogous structural assumptions. However, in order to keep the presentation concise, we confine our analysis to the scalar equation case.
\end{remark}

\begin{remark}
    Kim-Oh \cite{kim2025boundedsolutionsinterpolativegap} considered the homogeneous degenerate parabolic double phase problems with the model equation
    $$
    u_t-\operatorname{div}(|Du|^{p-2}Du+a(x,t)|Du|^{q-2}Du)=0\quad \text{in }\Omega_T,
    $$
    where $2\leq p<q$. As in this paper, one can include a source term by imposing appropriate assumptions. When \eqref{cond : degenerate bounded solution condition} is satisfied, we need the assumption that the source term $F:\Omega_T\rightarrow \mr^n$ satisfies
    $$
    H(\cdot,|F|)\in L^{\tilde{\gamma}_b}(\Omega_T),\quad\text{where } \tilde{\gamma}_b=\frac{n+p}{p},
    $$
    see also \cite{Chlebicka2025}, whereas, when \eqref{cond : degenerate interpolation condition} holds, we need
    $$
    H(\cdot,|F|)\in L^{\tilde{\gamma}_s}(\Omega_T),\quad\text{where } \tilde{\gamma}_s=\frac{(n+p)s+n(p-2)}{p(n+s)}.
    $$
    It is easy to see that these conditions are connected with each other and also with \eqref{cond : source term with infty} and \eqref{cond : source term with s}. One can obtain gradient higher integrability results by following the same arguments as in this paper.
\end{remark}
\begin{remark}
    A noteworthy point is that, in the singular case, the gap bound conditions depend on $p$, whereas the assumptions on the source term do not. In contrast, in the degenerate case, the conditions imposed on the source term depend on $p$, but the gap bound conditions do not.
\end{remark}

Differing from \cite{Wontae2024}, we distinguish the $p$-intrinsic and $(p,q)$-intrinsic cases by  imposing
\begin{equation}\label{p phase and p,q phase}
K\lambda^p\geq \sup_{Q_{10\rho}(z)} a(\cdot) \lambda^q \quad \text{and}\quad K\lambda^p\leq \sup_{Q_{10\rho}(z)} a(\cdot)\lambda^q,
\end{equation}
respectively, where $K>1$ and $\rho$ denotes the radius in the $p$-intrinsic cylinder arising in the stopping-time argument in Section \ref{section 3}. These conditions simplify the proof of the lemmas in Section \ref{section 4}. Furthermore, when \eqref{p phase and p,q phase}$_2$ is satisfied, to obtain the  comparison condition for $a(\cdot)$, we need
$$
\sup_{Q_{10\rho}(z)} a(\cdot) \gtrsim \rho^\alpha.
$$
Hence, we want to show that
$$
K\lambda^p\leq \sup_{Q_{10\rho}(z)} a(\cdot)\lambda^q \quad \text{and}\quad \sup_{Q_{10\rho}(z)} a(\cdot) \lesssim  \rho^\alpha
$$
cannot hold simultaneously. Under the assumptions \eqref{cond : main assumption with infty} and \eqref{cond : source term with infty}, or \eqref{cond : main assumption with s} and \eqref{cond : source term with s}, the argument used in \cite{Wontae2024} can no longer be employed to prove this. To address this issue, we use Lemma \ref{lem : decay estimate} (see also \cite[Lemma 3.1]{Wontae2024}) to prove Lemmas \ref{lem : no occurence with s=infty} and \ref{lem : no occurence with s<infty}. In particular, the $F$-term is controlled by using \eqref{cond : source term with infty} or \eqref{cond : source term with s}. These conditions are used only in Lemmas \ref{lem : no occurence with s=infty} and \ref{lem : no occurence with s<infty}. In Section \ref{section 3}, we employ a stopping time argument to derive the properties of $p$- and $(p,q)$-intrinsic cylinders defined in Section \ref{section 2}. In Section \ref{section 4}, we prove the reverse H\"{o}lder inequalities for each intrinsic cylinder. In particular, for the $p$-intrinsic cylinder, we first establish the case $s=\infty$. For the case $s<\infty$, in order to prove Lemma \ref{lem : estimate of the first term in Lemma 2.1 in p-intrinsic cylinder with s}, we divide the argument into the two subcases $2\leq s \leq 4$ and $4<s<\infty$. Lastly, using the Vitali covering lemma (see Subsection \ref{subsection 5.1}) and Fubini's theorem, we prove Theorems \ref{thm : main theorem for infty} and \ref{thm : main theorem for s<infty} in Subsection \ref{subsection 5.2}.

\section{\bf Preliminaries}\label{section 2}
For a fixed point $z_0 \in \Omega_T$, we denote
\begin{equation}\label{def : definition of H with a fixed center z_0}
    H_{z_0}(\varkappa)\coloneq \varkappa^p+a(z_0)\varkappa^q \qquad \text{for } \varkappa\geq 0.
\end{equation}
We write parabolic cylinders as
$$
Q_{R,\ell}(z_0)=B_R(x_0)\times (t_0-\ell,t_0+\ell),\quad R,\,\ell>0,
$$
and
$$
Q_\rho (z_0)=B_\rho (x_0)\times I_\rho (t_0),
$$
where
$$
B_\rho (x_0)=\{x\in \mr^n : |x-x_0|<\rho\}
$$
and
$$
I_\rho(t_0)=(t_0-\rho^2,t_0+\rho^2).
$$
We set a $p$-intrinsic cylinder
\begin{equation}\label{eq : definition of p-intrinsic cylinder}
Q_\rho^\lambda(z_0)\coloneq  B_\rho^\lambda(x_0)\times I_{\rho}(t_0),\quad\text{where } B_\rho^\lambda (x_0)= B_{\lambda^{\frac{p-2}{2}}\rho}(x_0)
\end{equation}
and a $(p,q)$-intrinsic cylinder
\begin{equation}\label{eq : definition of p,q-intrinsic cylinder}
G_\rho^\lambda(z_0)\coloneq  B_\rho^\lambda(x_0)\times J_\rho^\lambda(t_0),\quad\text{where } J_\rho^\lambda(t_0)=\left(t_0-\frac{\lambda^p}{H_{z_0}(\lambda)}\rho^2,t_0+\frac{\lambda^p}{H_{z_0}(\lambda)}\rho^2\right).
\end{equation}
Since $\frac{\lambda^p}{H_{z_0}(\lambda)}\rho^2=\frac{\lambda^2}{H_{z_0}(\lambda)}(\lambda^{\frac{p-2}{2}}\rho)^2$, we see that $G_\rho^\lambda(z_0)$ is the standard intrinsic cylinder for a $(p,q)$-Laplace problem. For $c>0$, we denote
$$
cQ_\rho^\lambda (z_0)=Q_{c\rho}^\lambda (z_0)\quad \text{and}\quad cG_\rho^\lambda(z_0)=G_{c\rho}^\lambda (z_0).
$$
The integral average of $f\in L^1(\Omega_T)$ over a measurable set $E\subset \Omega_T$ with $0<|E|<\infty$ is denoted by
$$
f_E=\frac{1}{|E|}\iints{E} f\, dz=\miint{E} f\, dz.
$$
Also, the spatial integral average of $f\in C(0,T;L^1(\Omega))$ over an $n$-dimensional ball $B\subset \Omega$ is denoted by
$$
f_{B}(t)=\dashint_{B} f(x,t) \, dx.
$$
For convenience, we write 
\begin{equation*}
    \operatorname{data}=\begin{cases}
        \operatorname{data}_b &\text{if \eqref{cond : main assumption with infty} holds,}\\
        \operatorname{data}_s &\text{if \eqref{cond : main assumption with s} holds}.
    \end{cases}
\end{equation*}
Next, we denote the super-level sets as 
\begin{equation}\label{def : definition of Psi}
    \Psi(\Lambda)\coloneq \{z\in\Omega_T:H(z,|Du(z)|)>\Lambda\}
\end{equation}
and
\begin{equation}\label{def : definition of Phi}
    \Phi(\Lambda)\coloneq \{z\in\Omega_T:H(z,|F(z)|)>\Lambda\}.
\end{equation}

The following two lemmas are derived from the definition of weak solution to \eqref{eq : the main equation}. However, a priori condition $u\in L^1(0,T;W^{1,1}(\Omega))$ with
$$
\iints{\Omega_T} H(z,|Du|)\, dz <\infty
$$
does not allow $u$ to be used as a test function in the definition of a weak solution. However, through a Lipschitz truncation method, $u$ can be used as a test function, as in the degenerate case \cite{Wontae2023a}. The proof of the following lemmas can be found in \cite{Wontae2023a} and \cite{2023_Gradient_Higher_Integrability_for_Degenerate_Parabolic_Double-Phase_Systems}. Here, the estimate for the source term is obtained by first using \eqref{cond : growth condition of source term} and then proceeding with the proof in the same manner.
\begin{lemma}[\cite{Wontae2024}, Lemma 2.3]\label{lem : Caccioppoli inequality}
    Let $u$ be a weak solution to \eqref{eq : the main equation}. Then there exists a positive constant $c=c(n,p,q,\nu,L)$ such that
    $$
    \begin{aligned}
        &\sup_{t\in (t_0-\tau,t_0+\tau)}\dashint_{B_r(x_0)} \frac{|u-u_{Q_{r,\tau}(z_0)}|^2}{\tau}\, dx+\miint{Q_{r,\tau}(z_0)} H(z,|Du|)\, dz\\
        &\quad \leq c \miint{Q_{R, \ell}\left(z_0\right)}\left(\frac{\left|u-u_{Q_{R, \ell}\left(z_0\right)}\right|^p}{(R-r)^p}+a(z) \frac{\left|u-u_{Q_{R, \ell}\left(z_0\right)}\right|^q}{(R-r)^q}\right) dz \\
        &\qquad+c \miint{Q_{R, \ell}\left(z_0\right)} \frac{\left|u-u_{Q_{R, \ell}\left(z_0\right)}\right|^2}{\ell-\tau} \, dz + c\miint{Q_{R,\ell}(z_0)} H(z,|F|)\, dz
    \end{aligned}
    $$
    for every $Q_{R, \ell}\left(z_0\right) \subset \Omega_T$ with $R, \ell>0,\, r \in[R / 2, R)$ and $\tau \in [\ell / 2^2, \ell )$.
\end{lemma}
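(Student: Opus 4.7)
The plan is to test the weak formulation of \eqref{eq : the main equation} with
$$
\varphi(x,t) = (u(x,t) - u_{Q_{R,\ell}(z_0)})\,\eta^{q}(x)\,\tilde{\zeta}(t)\,\zeta_\sigma(t),
$$
where $\eta \in C_0^\infty(B_R(x_0))$ is a spatial cutoff with $\eta \equiv 1$ on $B_r(x_0)$ and $|D\eta|\le c(R-r)^{-1}$, $\tilde{\zeta} \in C_0^\infty(t_0-\ell, t_0+\ell)$ is a temporal cutoff with $\tilde{\zeta}\equiv 1$ on $(t_0-\tau, t_0+\tau)$ and $|\tilde{\zeta}'|\le c(\ell-\tau)^{-1}$, and $\zeta_\sigma$ is a smooth approximation of $\chi_{(-\infty, t_1)}$ for an arbitrary $t_1\in(t_0-\tau, t_0+\tau)$. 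Since the a priori bound $H(z,|Du|)\in L^1(\Omega_T)$ does not make $u$ admissible as a test function, I would first mollify $u$ in time via Steklov averages and apply the Lipschitz truncation scheme developed in \cite{Wontae2023a} and \cite{2023_Gradient_Higher_Integrability_for_Degenerate_Parabolic_Double-Phase_Systems}; the estimate is established for the truncation, and one then passes to the limit.

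For the parabolic term, the product rule yields $\tfrac12\partial_t(|u - u_{Q_{R,\ell}(z_0)}|^2)\eta^{q}\tilde{\zeta}\zeta_\sigma$; integrating by parts in $t$ and sending $\sigma\to 0$ localizes the derivative of $\zeta_\sigma$ at $t_1$, and subsequently taking the supremum over $t_1$ produces the supremum term on the left-hand side of the statement, while the contribution coming from $\tilde{\zeta}'$ yields the remainder $\iints{Q_{R,\ell}(z_0)} |u-u_{Q_{R,\ell}(z_0)}|^2/(\ell-\tau)\,dz$ on the right (the swap between the mean $u_{Q_{R,\ell}}$ in the integrand and the mean $u_{Q_{r,\tau}}$ in the supremum is standard, as the two differ by a constant controlled by the other right-hand side terms). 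For the spatial diffusion, the identity $D\varphi = Du\,\eta^{q}\tilde{\zeta}\zeta_\sigma + q(u-u_{Q_{R,\ell}(z_0)})\eta^{q-1}D\eta\,\tilde{\zeta}\zeta_\sigma$ combined with the coercivity bound in \eqref{cond : double phase bounded condition of integrand} produces the good term $\nu H(z,|Du|)\eta^{q}$, and the remaining cross term together with the source term controlled by \eqref{cond : growth condition of source term} is handled by Young's inequality applied separately in the $p$-part (with exponents $p, p'$) and in the $q$-part (with exponents $q, q'$), after which the resulting small $|Du|^p$ and $a(z)|Du|^q$ factors are absorbed into the leading coercive term. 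Dividing by $|Q_{r,\tau}(z_0)|$ yields the claimed inequality.

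The principal technical obstacle is the rigorous implementation of the Lipschitz truncation: one has to verify that the truncation error vanishes in both the $L^p$ norm and the $a(z)$-weighted $L^q$ norm simultaneously, so that the limiting Caccioppoli estimate retains both phases, which is delicate in the presence of the modulating coefficient $a(\cdot)$. This is exactly the content of the analogous argument for the degenerate case in \cite{Wontae2023a} and \cite{2023_Gradient_Higher_Integrability_for_Degenerate_Parabolic_Double-Phase_Systems}, and I would adapt it verbatim to the singular range $\tfrac{2n}{n+2}<p\le 2$; the only new ingredient is the $F$-term, which enters additively after Young's inequality and produces the last integral on the right-hand side.
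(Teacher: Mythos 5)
Your proposal is correct and follows essentially the same route the paper relies on: the paper does not reprove this lemma but cites it from the references, noting exactly the ingredients you describe — admissibility of $u-u_{Q_{R,\ell}(z_0)}$ as a test function via Steklov averaging and the Lipschitz truncation of \cite{Wontae2023a,2023_Gradient_Higher_Integrability_for_Degenerate_Parabolic_Double-Phase_Systems}, coercivity and growth from \eqref{cond : double phase bounded condition of integrand}, and the source term entering through \eqref{cond : growth condition of source term} followed by Young's inequality. The cutoff power $\eta^q$, the split of Young's inequality into the $p$- and $q$-parts, and the mean-swap remark are all consistent with the cited argument.
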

\begin{lemma}[\cite{Wontae2024}, Lemma 2.4] \label{lem : semi-Parabolic Poincare inequality}
    Let $u$ be a weak solution to \eqref{eq : the main equation}. Then there exists a positive constant $c=c(n, m, L)$ such that
    $$
    \begin{aligned}
        &\miint{Q_{R, \ell}(z_0)} \frac{|u-u_{\mathcal{Q}_{R, \ell}(z_0)}|^{\theta m}}{R^{\theta m}} \,dz \leq c \miint{Q_{R, \ell}(z_0)}|D u|^{\theta m} \,dz \\
        &\qquad+c\left(\frac{\ell}{R^2} \iints{Q_{R, \ell}\left(z_0\right)} \left[|D u|^{p-1}+a(z)|D u|^{q-1}+|F|^{p-1}+a(z)|F|^{q-1}\right] d z\right)^{\theta m}
    \end{aligned}
    $$
    for every $Q_{R, \ell}(z_0) \subset \Omega_T$ with $R, \ell>0, m \in(1, q]$ and $\theta \in(1 / m, 1]$.
\end{lemma}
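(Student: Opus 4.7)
The plan is to separate spatial and temporal contributions to the oscillation. Writing $u_{B_R(x_0)}(t) = \dashint_{B_R(x_0)} u(\cdot, t)\,dx$ for the spatial mean on a time slice, the triangle inequality gives
$$
|u(x,t) - u_{Q_{R,\ell}(z_0)}|^{\theta m} \leq c\,|u(x,t) - u_{B_R(x_0)}(t)|^{\theta m} + c\,|u_{B_R(x_0)}(t) - u_{Q_{R,\ell}(z_0)}|^{\theta m},
$$
so the task reduces to bounding each piece on the right.

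For the first piece, since $\theta m > 1$, the spatial Sobolev--Poincar\'e inequality on each time-slice ball yields, for a.e.\ $t$,
$$
\dashint_{B_R(x_0)} \frac{|u(x,t) - u_{B_R(x_0)}(t)|^{\theta m}}{R^{\theta m}}\,dx \leq c \dashint_{B_R(x_0)} |Du(x,t)|^{\theta m}\,dx,
$$
which after integration over $(t_0 - \ell, t_0 + \ell)$ supplies the first term of the claim. For the second piece, the plan is to bound the oscillation of $t \mapsto u_{B_R(x_0)}(t)$ using the equation. I would fix a spatial cutoff $\eta \in C_0^\infty(B_R(x_0))$ with $0 \leq \eta \leq 1$, $\eta \equiv 1$ on $B_{R/2}(x_0)$ and $|D\eta| \leq c/R$, and test \eqref{eq : the main equation} against $\varphi(x,s)=\eta(x)\psi(s)$, where $\psi$ approximates $\chi_{[\tau_1,\tau_2]}$ for $\tau_1,\tau_2 \in (t_0 - \ell, t_0 + \ell)$. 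Letting $\psi \to \chi_{[\tau_1,\tau_2]}$ and applying the growth bounds \eqref{cond : double phase bounded condition of integrand} and \eqref{cond : growth condition of source term} on $\mathcal{A}$ and $\mathcal{B}$ respectively, one obtains
$$
\left|\int_{B_R(x_0)} (u(\cdot,\tau_2)-u(\cdot,\tau_1))\,\eta\,dx\right| \leq \frac{c}{R}\iints{Q_{R,\ell}(z_0)} \bigl[|Du|^{p-1}+a(z)|Du|^{q-1}+|F|^{p-1}+a(z)|F|^{q-1}\bigr]\,dz.
$$
A further spatial Poincar\'e step replaces the $\eta$-weighted mean by the genuine mean $u_{B_R(x_0)}$, at the cost of an additional $R\dashint_{B_R}|Du|\,dx$ term that is absorbed into the first piece. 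Averaging $\tau_1$ over $(t_0 - \ell, t_0 + \ell)$ then bounds $|u_{B_R(x_0)}(t) - u_{Q_{R,\ell}(z_0)}|$ by the same right-hand side (up to the $\ell$ factor). Dividing by $R$ and raising to the $\theta m$-th power, after collecting $|B_R|$ and $\ell$ into $\ell/R^2$, produces the second term of the claim.

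The main technical point is the use of \eqref{eq : the main equation} in weak form, which in principle demands higher time regularity than the a priori class $u \in C(0,T;L^2(\Omega)) \cap L^1(0,T;W^{1,1}(\Omega))$ provides. However, because the test function $\eta\psi$ here does not involve $u$, a standard Steklov averaging or time mollification makes the computation rigorous, and the full Lipschitz truncation machinery needed for Lemma \ref{lem : Caccioppoli inequality} is not required at this stage. The only remaining care is choosing $\eta$ so that the correction between the $\eta$-weighted mean and $u_{B_R(x_0)}$ does not spoil the gradient growth structure on the right-hand side, which is a routine construction.
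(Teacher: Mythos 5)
Your proposal is correct and is exactly the standard argument behind this lemma, which the paper does not prove itself but defers to the cited references \cite{Wontae2023a,2023_Gradient_Higher_Integrability_for_Degenerate_Parabolic_Double-Phase_Systems}: slice-wise Sobolev--Poincar\'e for the spatial oscillation, plus the gluing estimate obtained by testing with $\eta(x)\psi(t)$ (rigorous via Steklov averaging, no Lipschitz truncation needed since $u$ does not enter the test function) for the temporal oscillation of the weighted means. Note only that your bookkeeping produces $\frac{\ell}{R^2}$ times the \emph{average} integral over $Q_{R,\ell}(z_0)$, which is indeed how the lemma is applied later (e.g.\ in Lemma \ref{lem : p-intrinsic parabolic Poincare inequality of p-term in p-intrinsic cylinder with bounded solution}, where $\ell/R^2=\lambda^{2-p}$ multiplies a mean integral), so the non-averaged $\iintss$ in the displayed statement should be read as a mean integral.
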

\section{\bf Stopping time argument}\label{section 3}
We put
\begin{equation}\label{def : lambda_0 and Lambda_0}
\begin{aligned}
    \lambda_0^\frac{p(n+2)-2n}{2}&\coloneq \miint{Q_{2r}(z_0)} \left[H(z,|Du|)+H(z,|F|)+1\right] dz,\\ 
    &\Lambda_0\coloneq \lambda_0^p+\sup_{z\in Q_{2r}(z_0)} a(z)\lambda_0^q,
\end{aligned}
\end{equation}
where $Q_{2r}(z_0)=B_{2r}(x_0)\times (t_0-(2r)^2,t_0+(2r)^2)$. Moreover, let
\begin{equation}\label{def : definition of K and kappa}
    K\coloneq \begin{cases}
        1+80c_b[a]_\alpha &\text{if \eqref{cond : main assumption with infty} holds,}\\
        1+80c_s[a]_\alpha &\text{if \eqref{cond : main assumption with s} holds,}
    \end{cases}\quad \text{and}\quad \kappa\coloneq 20K,
\end{equation}
where $c_b$ and $c_s$ will be defined in Lemmas \ref{lem : no occurence with s=infty} and \ref{lem : no occurence with s<infty}, respectively.
For $\Psi(\Lambda)$ as in \eqref{def : definition of Psi}, $\Phi(\Lambda)$ as in \eqref{def : definition of Phi} and $\varrho\in [r,2r]$, we write
$$
\Psi(\Lambda,\varrho)\coloneq  \Psi(\Lambda)\cap Q_\varrho(z_0)=\{z\in Q_\varrho(z_0):H(z,|Du(z)|)>\Lambda\}
$$
and
$$
\Phi(\Lambda,\varrho)\coloneq  \Phi(\Lambda)\cap Q_\varrho(z_0)=\{z\in Q_\varrho(z_0):H(z,|F(z)|)>\Lambda\}.
$$

Next, we apply a stopping time argument. Let $r\leq r_1<r_2\leq 2r$ and
$$
\Lambda>\left(\frac{4\kappa r}{r_2-r_1}\right)^{\frac{2q(n+2)}{p(n+2)-2n}}\Lambda_0,
$$
where $\kappa$ is defined in \eqref{def : definition of K and kappa}. For any $w\in \Psi(\Lambda,r_1)$, we choose $\lambda_w>0$ such that
\begin{equation}\label{cond : Lambda=H(lambda)}
\Lambda=\lambda_w^p+a(w)\lambda_w^q=H_w(\lambda_w),
\end{equation}
where $H_{w}$ denotes the function defined in \eqref{def : definition of H with a fixed center z_0} with $z_0$ replaced by $w$. According to \cite[Subsection 4.1]{Wontae2024}, we obtain that there exists $\varrho_w \in (0,(r_2-r_1)/2\kappa)$ such that
\begin{equation}\label{cond : integral of H in =varrho}
\miint{Q_{\varrho_w}^{\lambda_w}(w)} [H(z,|Du|)+H(z,|F|)]\, dz =\lambda_w^p
\end{equation}
and
\begin{equation}\label{cond : integral of H in >varrho}
\miint{Q_{\varrho}^{\lambda_w}(w)} [H(z,|Du|)+H(z,|F|)]\, dz <\lambda_w^p
\end{equation}
for any $\varrho\in(\varrho_w,r_2-r_1)$. 

For $K>1$ as in \eqref{def : definition of K and kappa}, we consider the following three cases: 
\begin{enumerate}
    \item\label{case : p-phase} $\displaystyle K\lambda_w^p\geq \sup_{Q_{10\varrho_w}(w)}a(\cdot)\lambda_w^q$,
    \item\label{case : p,q-phase} $\displaystyle K\lambda_w^p\leq \sup_{Q_{10\varrho_w}(w)}a(\cdot)\lambda_w^q\quad$ and $\quad\displaystyle \sup_{Q_{10\varrho_w}(w)}a(\cdot)\geq 4[a]_\alpha (10\varrho_w)^\alpha$,
    \item\label{case : no occurence} $\displaystyle K\lambda_w^p\leq \sup_{Q_{10\varrho_w}(w)}a(\cdot)\lambda_w^q\quad$ and $\quad\displaystyle \sup_{Q_{10\varrho_w}(w)}a(\cdot)\leq 4[a]_\alpha (10\varrho_w)^\alpha$.
\end{enumerate}
\textbf{Case \eqref{case : p-phase}}: By using \eqref{cond : integral of H in =varrho} and \eqref{cond : integral of H in >varrho} and replacing the center point $w$, radius $\varrho_w$ and $\lambda_w$ with $z_0$, $\rho$ and $\lambda$, respectively, we obtain
\begin{equation}\label{cond : p-phase condition}
    \left\{\begin{aligned}
        &K\lambda^p\geq \sup_{Q_{10\rho}(z_0)} a(\cdot)\lambda^q,\\
        &\miint{Q_\sigma^\lambda (z_0)} [H(z,|Du|)+H(z,|F|)]\, dz <\lambda^p \quad \text{for any }\sigma\in(\rho,2\kappa\rho],\\
        &\miint{Q_\rho^\lambda(z_0)} [H(z,|Du|)+H(z,|F|)]\, dz = \lambda^p.
    \end{aligned}\right.
\end{equation}

The following lemma provides an estimate for the relationship between $\rho$ and $\lambda$, which will be used later.

\begin{lemma}\label{lem : decay estimate}
    If \eqref{cond : p-phase condition}$_3$ holds and $H(\cdot,|F|)\in L^\gamma(\Omega_T)$ for some $\gamma\geq 1$, there exists $c> 1$ depending on $n,p,\gamma,|\Omega_T|$, $\|H(z,|Du|)\|_{L^1(\Omega_T)}$ and $\|H(z,|F|)\|_{L^\gamma(\Omega_T)}$ such that
    \begin{equation}\label{eq : decay estimate of rho and lambda}
    \lambda \leq c \rho^{-\frac{n+2}{\mu_2}}.
    \end{equation}
\end{lemma}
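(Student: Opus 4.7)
\smallskip

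The plan is to extract the estimate directly from the equality in \eqref{cond : p-phase condition}$_3$ by bounding the numerator by a global constant and computing the measure of the $p$-intrinsic cylinder. First, I would rewrite \eqref{cond : p-phase condition}$_3$ in the un-normalized form
$$
\lambda^p\,|Q_\rho^\lambda(z_0)|=\iints{Q_\rho^\lambda(z_0)}\bigl[H(z,|Du|)+H(z,|F|)\bigr]\,dz\leq \|H(\cdot,|Du|)\|_{L^1(\Omega_T)}+\|H(\cdot,|F|)\|_{L^1(\Omega_T)}.
$$
Since $H(\cdot,|F|)\in L^\gamma(\Omega_T)$ and $|\Omega_T|<\infty$, H\"older's inequality gives
$$
\|H(\cdot,|F|)\|_{L^1(\Omega_T)}\leq |\Omega_T|^{1-\frac{1}{\gamma}}\,\|H(\cdot,|F|)\|_{L^\gamma(\Omega_T)},
$$
so the right-hand side above is controlled by a constant depending only on $|\Omega_T|$, $\|H(\cdot,|Du|)\|_{L^1(\Omega_T)}$ and $\|H(\cdot,|F|)\|_{L^\gamma(\Omega_T)}$ (and, through $\gamma$, on the exponent itself).

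Next I would compute the measure of the $p$-intrinsic cylinder from \eqref{eq : definition of p-intrinsic cylinder}:
$$
|Q_\rho^\lambda(z_0)|=|B_{\lambda^{(p-2)/2}\rho}(x_0)|\cdot |I_\rho(t_0)|=2\omega_n\,\lambda^{\frac{n(p-2)}{2}}\rho^{n+2},
$$
where $\omega_n$ is the volume of the unit ball. Substituting this into the displayed inequality yields
$$
\lambda^{p+\frac{n(p-2)}{2}}\,\rho^{n+2}\leq c,
$$
with $c$ depending on $n,p,\gamma,|\Omega_T|,\|H(\cdot,|Du|)\|_{L^1(\Omega_T)},\|H(\cdot,|F|)\|_{L^\gamma(\Omega_T)}$.

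Finally, the key algebraic identity is $p+\frac{n(p-2)}{2}=\frac{p(n+2)-2n}{2}=\mu_2$, which is precisely the scaling exponent of the singular parabolic $p$-Laplacian; this is where the hypothesis $p>\tfrac{2n}{n+2}$ (ensuring $\mu_2>0$) is essential, and is the one point to flag carefully. Taking the $\mu_2$-th root gives $\lambda\leq c\,\rho^{-(n+2)/\mu_2}$, as desired. There is no genuine obstacle here — the only thing worth monitoring is the positivity of $\mu_2$ so that raising both sides to the reciprocal power preserves the inequality direction, and the bookkeeping showing that the constant $c$ depends only on the listed parameters.
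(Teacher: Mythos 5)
Your proposal is correct and follows essentially the same route as the paper: unwrap the average in \eqref{cond : p-phase condition}$_3$, bound the integral by the global $L^1$ norms, compute $|Q_\rho^\lambda(z_0)|=2|B_1|\lambda^{\frac{n(p-2)}{2}}\rho^{n+2}$, and use the identity $p+\tfrac{n(p-2)}{2}=\mu_2>0$ to solve for $\lambda$. Your explicit H\"older step converting $\|H(\cdot,|F|)\|_{L^1(\Omega_T)}$ into $|\Omega_T|^{1-1/\gamma}\|H(\cdot,|F|)\|_{L^\gamma(\Omega_T)}$ is left implicit in the paper but is exactly where the stated dependence of $c$ on $\gamma$ and $|\Omega_T|$ comes from.
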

\begin{proof}
    By \eqref{cond : p-phase condition}$_3$, we have
    $$
    \begin{aligned}
        \lambda^p &=\miint{Q_\rho^\lambda(z_0)} [H(z,|Du|)+H(z,|F|)]\, dz\\ 
        & = \frac{\lambda^{\frac{(2-p)n}{2}}}{2\rho^{n+2}|B_1|}\iints{Q_\rho^\lambda (z_0)} [H(z,|Du|)+H(z,|F|)]\, dz\\
        & \leq \frac{\|H(z,|Du|)\|_{L^1(\Omega_T)}+\|H(z,|F|)\|_{L^1(\Omega_T)}}{2|B_1|}\cdot\frac{\lambda^{\frac{(2-p)n}{2}}}{\rho^{n+2}}.
    \end{aligned}
    $$ 
    Thus, we obtain
    $$
    \rho\leq \left(\frac{\|H(z,|Du|)\|_{L^1(\Omega_T)}+\|H(z,|F|)\|_{L^1(\Omega_T)}}{2|B_1|}\right)^{\frac{1}{n+2}}\lambda^{-\frac{\mu_2}{n+2}},
    $$
    and hence
    $$
    \lambda \leq c \rho^{-\frac{n+2}{\mu_2}}
    $$
    for some $c=c(n,p,\gamma,|\Omega_T|,\|H(z,|Du|)\|_{L^1(\Omega_T)},\|H(z,|F|)\|_{L^\gamma(\Omega_T)})> 1.$
\end{proof}
The following identity is frequently used in this paper:
\begin{equation}\label{calculate mu_2}
(n+2)(2-p)+2\mu_2=4.
\end{equation}

\textbf{Case \eqref{case : p,q-phase}}: We obtain from \eqref{case : p,q-phase}$_2$ that
$$
4[a]_\alpha (10\varrho_w)^\alpha\leq \sup_{Q_{10\varrho_w}(w)} a(\cdot)\leq \inf_{Q_{10\varrho_w}(w)}a(\cdot)+2[a]_\alpha (10\varrho_w)^\alpha,
$$
and hence
$$
\sup_{Q_{10\varrho_w}(w)}a(\cdot)\leq \inf_{Q_{10\varrho_w} (w)} 2a(\cdot) +[a]_\alpha (10\varrho_w)^\alpha\leq 2\inf_{Q_{10\varrho_w} (w)} a(\cdot).
$$
Therefore, we get
\begin{equation}    \label{cond : comparison in p,q-phase}
    \frac{a(w)}{2}\leq a(\tilde{w})\leq 2a(w) \quad \text{for every } \tilde{w}\in Q_{10\varrho_w} (w).
\end{equation}
Also, by \cite[Subsection 4.1]{Wontae2024}, there exists $\varsigma_w\in (0,\varrho_w]$ such that
\begin{equation}\label{cond : integral of H in =varsigma in p,q-phase}
    \miint{G^{\lambda_w}_{\varsigma_w}(w)} [H(z,|Du|)+H(z,|F|)]\, dz=H_w(\lambda_w)
\end{equation}
and
\begin{equation}\label{cond : integral of H in >varsigma in p,q-phase}
    \miint{G^{\lambda_w}_{\sigma}(w)} [H(z,|Du|)+H(z,|F|)]\, dz<H_w(\lambda_w)
\end{equation}
for any $\sigma\in(\varsigma_w,r_2-r_1)$. Hence, if we replace the center point $w$, radius $\varsigma_w$ and $\lambda_w$ in \eqref{cond : comparison in p,q-phase}-\eqref{cond : integral of H in >varsigma in p,q-phase} with $z_0$, $\rho$ and $\lambda$, respectively, we obtain
\begin{equation}\label{cond : p,q-phase condition}
    \left\{\begin{aligned}
        &K\lambda^p\leq \sup_{Q_{10\rho}(z_0)} a(\cdot)\lambda^q,\quad \frac{a(z_0)}{2}\leq a(z)\leq 2a(z_0)\quad \text{for every }z\in G_{4\rho}^\lambda(z_0),\\
        &\miint{G_\sigma^\lambda (z_0)} [H(z,|Du|)+H(z,|F|)]\, dz <H_{z_0}(\lambda) \quad \text{for any }\sigma\in(\rho,2\kappa\rho],\\
        &\miint{G_\rho^\lambda(z_0)} [H(z,|Du|)+H(z,|F|)]\, dz = H_{z_0}(\lambda).
    \end{aligned}\right.
\end{equation}
\textbf{Case \eqref{case : no occurence}}: We shall rigorously exclude the possibility of this case by proving the estimates
\begin{equation}\label{cond : the impossibility of Case (3)}
\begin{cases}
    \lambda_w \lesssim \varrho_w^{-\frac{4}{p(n+2)-2n}} \qquad&\text{if } \eqref{cond : main assumption with infty} \text{ holds},\\
    \lambda_w \lesssim \varrho_w^{-\frac{n+s}{\mu_s}} &\text{if } \eqref{cond : main assumption with s} \text{ holds}.
\end{cases}
\end{equation}
\begin{lemma}\label{lem : no occurence with s=infty}
    Let $u$ be a weak solution to \eqref{eq : the main equation}, and suppose that
    \begin{equation}\label{cond : no occurence}
    \sup_{Q_{10\varrho_w}(w)}a(\cdot)\leq 4[a]_\alpha (10\varrho_w)^\alpha.
    \end{equation}
    If \eqref{cond : main assumption with infty} and \eqref{cond : source term with infty} hold, then there exists a constant $c_b=c_b(\operatorname{data}_b)>1$ such that
    $$
    \varrho_w\leq c_b\lambda_w^{-\frac{p(n+2)-2n}{4}}.
    $$
\end{lemma}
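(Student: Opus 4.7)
Write $\rho := \varrho_w$ and $\lambda := \lambda_w$. The strategy is to estimate the quantity $\lambda^p = \miint_{Q_\rho^\lambda(w)}[H(z,|Du|) + H(z,|F|)]\,dz$ from \eqref{cond : integral of H in =varrho} from above in terms of $\rho$ and $\lambda$, and then to solve for $\rho$. The main tools are the Caccioppoli inequality (Lemma \ref{lem : Caccioppoli inequality}) paired with the $L^\infty$-bound on $u$, H\"{o}lder's inequality applied to the source term via \eqref{cond : source term with infty}, and the no-occurrence bound \eqref{cond : no occurence} on $a$. One may reduce to $\lambda \gtrsim 1$ (the complementary regime giving the conclusion trivially from a diameter bound $\rho \lesssim \operatorname{diam}(\Omega)$, with the threshold controlled by Lemma \ref{lem : decay estimate}), and then the intrinsic scaling $\lambda^{(p-2)/2} \leq 1$ ensures $Q_{2\rho}^\lambda(w) \subset Q_{10\rho}(w)$ so that \eqref{cond : no occurence} applies on it.

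\textbf{Main steps.} The first step is to apply Caccioppoli with inner cylinder $Q_\rho^\lambda(w)$ and outer cylinder $Q_{2\rho}^\lambda(w)$, pointwise bounding $|u - u_{Q_{R,\ell}(w)}| \leq 2\|u\|_{L^\infty(\Omega_T)}$ and $a(z) \leq 2[a]_\alpha(10\rho)^\alpha$ on the outer cylinder; the resulting scales $R - r \sim \lambda^{(p-2)/2}\rho$ and $\ell - \tau \sim \rho^2$ produce three explicit polynomial terms plus a source integral. The second step is to apply H\"{o}lder's inequality with exponent $\gamma_b = (n+2)/2$ to the source integral, using $|Q_{2\rho}^\lambda(w)| \sim \lambda^{n(p-2)/2}\rho^{n+2}$ to convert \eqref{cond : source term with infty} into an explicit power of $\lambda$ and $\rho$. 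Combining with the integral identity for $\lambda^p$ gives
$$
\lambda^p \leq c\left[\lambda^{p(2-p)/2}\rho^{-p} + \lambda^{q(2-p)/2}\rho^{\alpha - q} + \rho^{-2} + \lambda^{n(2-p)/(n+2)}\rho^{-2}\right],
$$
with $c = c(\operatorname{data}_b)$. The third step is a four-case analysis: at least one of the four terms must absorb $\lambda^p/4$. Each case rearranges to $\rho \leq c\lambda^{-e_i}$, and it suffices to verify $e_i \geq \mu_2/2 = (p(n+2)-2n)/4$. For the first and third terms this gives $e = p/2 \geq \mu_2/2$, which follows from $p - \mu_2 = n(2-p)/2 \geq 0$; for the fourth term $e_4 = (p(n+1) - n)/(n+2) \geq \mu_2/2$ reduces to the elementary $n^2(2-p) \geq 0$.

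\textbf{Main obstacle.} The delicate case is the second term $\lambda^{q(2-p)/2}\rho^{\alpha - q}$, which is the only one that requires the gap bound: after rearrangement the condition $e_2 \geq \mu_2/2$ reduces, upon invoking $q \leq p + \alpha\mu_2/2$, to the sharp identity $(\mu_2 - p)(p + \alpha\mu_2/2) \leq 0$, which again follows from $\mu_2 \leq p$. Tracking this algebra carefully to confirm that the gap bound $q \leq p + \alpha(p(n+2)-2n)/4$ is \emph{exactly} the amount required, and is triggered at precisely the right step, is the main care point; a strictly weaker gap would push $e_2$ below $\mu_2/2$, leaving Case (3) nontrivially possible. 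The second subtlety is the low-$\lambda$ regime, where one must separately invoke Lemma \ref{lem : decay estimate} (with $\gamma = \gamma_b$) to guarantee that $\rho$ cannot be too large relative to $\lambda$, so that the intrinsic cylinder $Q_{2\rho}^\lambda(w)$ indeed fits inside $Q_{10\rho}(w)$ throughout the argument.
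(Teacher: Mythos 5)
Your proposal is correct and follows essentially the same route as the paper: both start from the Caccioppoli inequality on $Q_{2\varrho_w}^{\lambda_w}(w)\subset Q_{10\varrho_w}(w)$, bound $u$ by its $L^\infty$-norm, control $a$ via \eqref{cond : no occurence}, treat the source term by H\"{o}lder's inequality with exponent $\gamma_b$, and let the gap bound enter exactly at the $a(z)|u|^q$-term, with your exponent checks (e.g.\ $p\geq\mu_2$, $n^2(2-p)\geq 0$, and the factorization $(\mu_2-p)(p+\alpha\mu_2/2)\leq 0$) equivalent to the paper's. The only difference is bookkeeping at the final step: the paper substitutes the decay estimate $\lambda\lesssim\rho^{-(n+2)/\mu_2}$ of Lemma \ref{lem : decay estimate} into each of the four terms to express everything as a power of $\rho$, whereas you run a dominant-term case analysis and solve each case for $\rho$ directly, invoking Lemma \ref{lem : decay estimate} only for the low-$\lambda$ reduction.
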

\begin{proof}
    By Lemma \ref{lem : Caccioppoli inequality} and \eqref{cond : integral of H in =varrho}, we get
    \begin{align}
    \lambda_w^p&=\miint{Q_{\varrho_w}^{\lambda_w}(w)}H(z,|Du|)\,dz \nonumber\\
    &\leq c\miint{Q_{2\varrho_w}^{\lambda_w}(w)}\left(\frac{\Big|u-u_{Q_{2\varrho_w}^{\lambda_w}(w)}\Big|^p}{\left(2\lambda_w^{\frac{p-2}{2}}\varrho_w\right)^p}+a(z)\frac{\Big|u-u_{Q_{2\varrho_w}^{\lambda_w}(w)}\Big|^q}{\left(2\lambda_w^{\frac{p-2}{2}}\varrho_w\right)^q}\right) dz \nonumber\\ 
    &\qquad +c\miint{Q_{2\varrho_w}^{\lambda_w}(w)}\frac{\Big|u-u_{Q_{2\varrho_w}^{\lambda_w}(w)}\Big|^2}{(2\varrho_w)^2}\,dz +c\miint{Q_{2\varrho_w}^{\lambda_w}(w)}H(z,|F|)\,dz\nonumber\\ \label{eq : Caccioppoli ineq in no occurence with infty}
    &=\mathrm{I}_1+\mathrm{I}_2+\mathrm{I}_3+\mathrm{I}_4
    \end{align}
    for some $c=c(n,p,q,\nu,L)>1$. We note from the triangle inequality and Jensen's inequality that
    \begin{equation}\label{eq : estimation of u-mean of u as u}
        \miint{Q_{2\varrho_w}^{\lambda_w}(w)}\Big|u-u_{Q_{2\varrho_w}^{\lambda_w}(w)}\Big|^\gamma\,dz\leq c(\gamma)\miint{Q_{2\varrho_w}^{\lambda_w}(w)}|u|^\gamma \, dz
    \end{equation}
    holds for any $\gamma\in [1,\infty)$.

    \textbf{Estimate of $\mathrm{I}_1$.} By \eqref{eq : estimation of u-mean of u as u}, we get
    $$
    \mathrm{I}_1\leq c\miint{Q_{2\varrho_w}^{\lambda_w}(w)}\frac{|u|^p}{\left(2\lambda_w^{\frac{p-2}{2}}\varrho_w\right)^p} \, dz\leq c\lambda_w^{\frac{(2-p)p}{2}}\varrho_w^{-p}
    $$
    for some $c=c(n,p,q,\nu,L,\|u\|_{L^\infty(\Omega_T)})>1$. Then it follows from \eqref{eq : decay estimate of rho and lambda} and \eqref{calculate mu_2} that
    $$
    \mathrm{I_1}\leq c\varrho_w^{-p\left(\frac{(n+2)(2-p)}{2\mu_2}+1\right)}=c\varrho_w^{-\frac{4p}{p(n+2)-2n}}
    $$
    for some $c>1$ depending on $n,p,q,\nu,L,|\Omega_T|,\|u\|_{L^\infty(\Omega_T)},\|H(z,|Du|)\|_{L^1(\Omega_T)}$ and $\|H(z,|F|)\|_{L^{\gamma_b}(\Omega_T)}$.

    \textbf{Estimate of $\mathrm{I}_2$.} By \eqref{cond : no occurence}, \eqref{eq : estimation of u-mean of u as u} and \eqref{cond : main assumption with infty}$_2$, we get
    $$
    \mathrm{I}_2\leq c\varrho_w^\alpha\miint{Q_{2\varrho_w}^{\lambda_w}(w)}\frac{|u|^q}{\left(2\lambda_w^{\frac{p-2}{2}}\varrho_w\right)^q} \, dz\leq c\lambda_w^{\frac{(2-p)q}{2}} \varrho_w^{\alpha-q}.
    $$
    for some $c=c(n,p,q,\alpha,\nu,L,[a]_\alpha,\|u\|_{L^\infty(\Omega_T)})>1$. Then it follows from \eqref{eq : decay estimate of rho and lambda} and \eqref{calculate mu_2} that
    $$
    \mathrm{I}_2\leq c\varrho_w^{-q\left(\frac{(n+2)(2-p)}{2\mu_2}+1\right)+\alpha}=c\varrho_w^{-\frac{4q}{p(n+2)-2n}+\alpha},
    $$
    where $c>1$ depends on $n,p,q,\alpha,\nu,L,|\Omega_T|,[a]_\alpha,\|u\|_{L^\infty(\Omega_T)},\|H(z,|Du|)\|_{L^1(\Omega_T)}$ and $\|H(z,|F|)\|_{L^{\gamma_b}(\Omega_T)}$. Since \eqref{cond : main assumption with infty} implies 
    $$
    -\frac{4p}{p(n+2)-2n}\leq -\frac{4q}{p(n+2)-2n}+\alpha<0,
    $$
    we have
    $$
    \mathrm{I}_2\leq c\varrho_w^{-\frac{4p}{p(n+2)-2n}}
    $$
    for some $c=c(\operatorname{data}_b)$.

    \textbf{Estimate of $\mathrm{I}_3$.} By \eqref{eq : estimation of u-mean of u as u} and Young's inequality, we get
    $$
    \mathrm{I}_3\leq c\miint{Q_{2\varrho_w}^{\lambda_w}(w)}\frac{|u|^2}{(2\varrho_w)^2} \, dz\leq c\varrho_w^{-2}
    $$
    for some $c=c(n,p,q,\nu,L,\|u\|_{L^\infty(\Omega_T)})>1$. Since $2p(n+2)-4n\leq 4 \leq 4p$ implies $2\leq \frac{4p}{p(n+2)-2n}$, we have 
    $$
    \mathrm{I}_3\leq c\varrho_w^{-\frac{4p}{p(n+2)-2n}}.
    $$

    \textbf{Estimate of $\mathrm{I}_4$.} We obtain from H\"{o}lder's inequality, \eqref{cond : source term with infty} and \eqref{eq : decay estimate of rho and lambda} that
    $$
    \begin{aligned}
        \mathrm{I}_4 \leq c\left(\miint{Q_{2\varrho_w}^{\lambda_w}(w)} [H(z,|F|)]^{\gamma_b}\,dz\right)^\frac{1}{\gamma_b} &\leq c \|H(z,|F|)\|_{L^{\gamma_b}(\Omega_T)}\lambda_w^{\frac{(2-p)n}{2\gamma_b}}\varrho_w^{-\frac{n+2}{\gamma_b}}\\
        &\leq c\varrho_w^{-\left(\frac{(2-p)n+2\mu_2}{\mu_2}\right)}= c \varrho_w^{-\frac{4p}{p(n+2)-2n}},
    \end{aligned}
    $$
    where $c=c(n,p,q,\nu,L,|\Omega_T|,\|H(z,|Du|)\|_{L^1(\Omega_T)},\|H(z,|F|)\|_{L^{\gamma_b}(\Omega_T)})>1$.

    Combining the above results with \eqref{eq : Caccioppoli ineq in no occurence with infty}, we conclude that
    $$
    \lambda_w^p\leq c\varrho_w^{-\frac{4p}{p(n+2)-2n}}
    $$
    for some $c=c(\operatorname{data}_b)>1$.
\end{proof}
Next, we prove \eqref{cond : the impossibility of Case (3)}$_2$ using the Gagliardo-Nirenberg multiplicative embedding inequality.
\begin{lemma}\label{lem : no occurence with s<infty}
Let $u$ be a weak solution to \eqref{eq : the main equation}, and suppose that \eqref{cond : no occurence} is satisfied.
If \eqref{cond : main assumption with s} and \eqref{cond : source term with s} hold for some $2\leq s <\infty$, then there exists a constant $c_s=c_s(\operatorname{data}_s)>1$ such that
$$
\varrho_w\leq c_s\lambda_w^{-\frac{\mu_s}{n+s}}.
$$
\end{lemma}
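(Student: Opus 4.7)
The proof plan mirrors that of Lemma~\ref{lem : no occurence with s=infty}: apply the Caccioppoli inequality (Lemma~\ref{lem : Caccioppoli inequality}) on the pair $Q_{\varrho_w}^{\lambda_w}(w) \subset Q_{2\varrho_w}^{\lambda_w}(w)$, use \eqref{cond : integral of H in =varrho} on the left to rewrite it as $\lambda_w^p$, and then bound the right-hand side by the four terms $\mathrm{I}_1+\mathrm{I}_2+\mathrm{I}_3+\mathrm{I}_4$ appearing in \eqref{eq : Caccioppoli ineq in no occurence with infty}. The source-term piece $\mathrm{I}_4$ can be handled exactly as in Lemma~\ref{lem : no occurence with s=infty}, but with the exponent $\gamma_s$ from \eqref{cond : source term with s} in place of $\gamma_b$; H\"older's inequality, Lemma~\ref{lem : decay estimate}, and the identity \eqref{calculate mu_2} together with $\mu_s = \mu_2 s/2$ combine to give $\mathrm{I}_4 \lesssim \varrho_w^{-p(n+s)/\mu_s}$, which after taking $p$-th roots matches the desired bound $\lambda_w \leq c\, \varrho_w^{-(n+s)/\mu_s}$.

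The essential new step is that the three $u$-dependent terms $\mathrm{I}_1, \mathrm{I}_2, \mathrm{I}_3$ can no longer be controlled pointwise by $\|u\|_{L^\infty}$. Instead, the plan is to apply the Gagliardo--Nirenberg multiplicative embedding inequality to $v = u - u_{Q_{2\varrho_w}^{\lambda_w}(w)}$, interpolating $\miint_{Q_{2\varrho_w}^{\lambda_w}(w)} |v|^r\, dz$ for $r \in \{p, q, 2\}$ between the spatial $L^p$ gradient norm on $Q_{2\varrho_w}^{\lambda_w}(w)$ (which is bounded by $\lambda_w^p$ through the stopping-time identity \eqref{cond : integral of H in =varrho}) and the temporal $C(0,T;L^s)$ norm of $u$ supplied by \eqref{cond : main assumption with s}. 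A routine but delicate bookkeeping, reducing the resulting powers of $\lambda_w$ to powers of $\varrho_w$ via Lemma~\ref{lem : decay estimate} and using \eqref{calculate mu_2} together with $\mu_s = \mu_2 s/2$, then shows that both $\mathrm{I}_1$ and $\mathrm{I}_3$ are controlled by $\varrho_w^{-p(n+s)/\mu_s}$.

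The most delicate term is $\mathrm{I}_2$, which contains $a(z)|u - u_Q|^q$ with possibly $q > s$. Here the Gagliardo--Nirenberg and scaling steps produce a $\varrho_w$-exponent of $-q(n+s)/\mu_s$, so a compensating factor of $\varrho_w^\alpha$ is required to reach $-p(n+s)/\mu_s$. This compensation is delivered by the no-occurrence hypothesis \eqref{cond : no occurence} through $\sup a(\cdot) \leq 2[a]_\alpha(10\varrho_w)^\alpha$, and by the gap bound $q \leq p+\alpha\mu_s/(n+s)$ from \eqref{cond : main assumption with s}, which is precisely what ensures $\alpha \geq (q-p)(n+s)/\mu_s$ and therefore $\mathrm{I}_2 \lesssim \varrho_w^{\alpha - q(n+s)/\mu_s} \lesssim \varrho_w^{-p(n+s)/\mu_s}$, after absorbing a factor $\operatorname{diam}(\Omega)^{(q-p)(n+s)/\mu_s}$ into the constant. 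The main technical obstacle is the careful choice of the Gagliardo--Nirenberg interpolation exponents so that they are admissible uniformly for all relevant $r \in \{p, q, 2\}$ across $s \in [2,\infty)$, and the scaling $\mu_s = \mu_2 s/2$ is exactly what keeps the entire exponent bookkeeping consistent and allows the gap bound in \eqref{cond : main assumption with s} to close the argument.
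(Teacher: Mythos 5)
Your proposal follows essentially the same route as the paper: Caccioppoli on $Q_{\varrho_w}^{\lambda_w}(w)\subset Q_{2\varrho_w}^{\lambda_w}(w)$, the four terms $\mathrm{I}_1,\dots,\mathrm{I}_4$, control of the $u$-terms through the $C(0,T;L^s(\Omega))$ norm and the Gagliardo--Nirenberg interpolation, the decay estimate of Lemma \ref{lem : decay estimate} with $\mu_s=\tfrac{s\mu_2}{2}$ and \eqref{calculate mu_2}, and the factor $\varrho_w^\alpha$ from \eqref{cond : no occurence} combined with the gap bound to close $\mathrm{I}_2$; the treatment of $\mathrm{I}_4$ via $\gamma_s$ is identical. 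One imprecision: Gagliardo--Nirenberg cannot be made admissible ``uniformly for all $r\in\{p,q,2\}$'' as you claim --- for $r=p,2$ (and for $r=q$ when $q\le s$) the interpolation parameter $\bigl(\tfrac1s-\tfrac1r\bigr)\bigl(\tfrac1n+\tfrac1s-\tfrac1p\bigr)^{-1}$ is non-positive, and the paper simply uses H\"older's inequality there, reserving Gagliardo--Nirenberg for the genuinely supercritical case $q>s$ of $\mathrm{I}_2$, where one must also first split $u-u_{Q}$ into the spatial oscillation $u-u_{B}(t)$ (to which the embedding applies slicewise) and the purely temporal piece $u_{B}(t)-u_{Q}$; neither point is fatal, but your sketch should acknowledge the case distinction rather than a single uniform choice of exponents.
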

\begin{proof}
    As in Lemma \ref{lem : no occurence with s=infty}, we infer from Lemma \ref{lem : Caccioppoli inequality} and \eqref{cond : integral of H in =varrho} that
    \begin{align}
        \lambda_w^p&=\miint{Q_{\varrho_w}^{\lambda_w}(w)}H(z,|Du|)\,dz \nonumber\\
        &\leq c\miint{Q_{2\varrho_w}^{\lambda_w}(w)}\left(\frac{\Big|u-u_{Q_{2\varrho_w}^{\lambda_w}(w)}\Big|^p}{\left(2\lambda_w^{\frac{p-2}{2}}\varrho_w\right)^p}+a(z)\frac{\Big|u-u_{Q_{2\varrho_w}^{\lambda_w}(w)}\Big|^q}{\left(2\lambda_w^{\frac{p-2}{2}}\varrho_w\right)^q}\right) dz \nonumber\\ 
        &\qquad +c\miint{Q_{2\varrho_w}^{\lambda_w}(w)}\frac{\Big|u-u_{Q_{2\varrho_w}^{\lambda_w}(w)}\Big|^2}{(2\varrho_w)^2}\,dz+c\miint{Q_{2\varrho_w}^{\lambda_w}(w)}H(z,|F|)\,dz \nonumber\\ \label{eq : Caccioppoli ineq in no occurence}
        &=\mathrm{I}_1+\mathrm{I}_2+\mathrm{I}_3+\mathrm{I}_4
    \end{align}
    for some $c=c(n,p,q,\nu,L)>1$.    
    
    \textbf{Estimate of $\mathrm{I}_3$.} Since $2\leq s$, we see from \eqref{eq : estimation of u-mean of u as u} and H\"{o}lder's inequality that
    \begin{align}
        \mathrm{I}_3&\leq c\miint{Q_{2\varrho_w}^{\lambda_w}(w)}\frac{|u|^2}{(2\varrho_w)^2} \, dz \nonumber\\
        &\leq c\left(\miint{Q_{2\varrho_w}^{\lambda_w}(w)}\frac{|u|^s}{(2\varrho_w)^s} \, dz\right)^{\frac{2}{s}} \nonumber\\
        &\leq c\frac{1}{(2\varrho_w)^2}\left(\frac{1}{|B^{\lambda_w}_{2\varrho_w}|}\sup_{t\in [0,T]}\int_{\Omega}|u|^s\,dx\right)^\frac{2}{s} \nonumber\\
        &\leq c\lambda_w^{\frac{(2-p)n}{s}}\varrho_w^{-\frac{2(n+s)}{s}} \nonumber\\ 
        &\leq c\lambda_w^{\frac{(2-p)n}{2}}\varrho_w^{-\frac{2(n+s)}{s}} \nonumber
    \end{align}
    for some $c=c(n,p,q,s,\nu,L,\|u\|_{C(0,T;L^s(\Omega))})>1$. Note that 
    $$
    \begin{aligned}
        \frac{2n}{n+2}<p \quad &\implies \quad 2n < pn + 2p\\
        &\implies \quad -2p<pn-2n\\
        &\implies \quad 0=2p-2p<2p-2n+pn=2p-(2-p)n\\
        &\implies \quad \frac{2p}{(2-p)n}>1.
    \end{aligned}
    $$
    Thus, applying Young's inequality with the exponents $\frac{2p}{(2-p)n}$ and $\frac{2p}{2p-2n+pn}$, we have
    \begin{equation}\label{eq : estimate of I_3 in lem_no occurence}
    \mathrm{I}_3\leq \frac{1}{2}\lambda_w^{p}+c\varrho_w^{-\frac{p(n+s)}{\mu_s}}
    \end{equation}
    for some $c=c(n,p,q,s,\nu,L,\|u\|_{C(0,T;L^s(\Omega))})>1$.

    \textbf{Estimate of $\mathrm{I}_1$.} Since $p\leq 2 \leq s$ and $2\mu_s=s\mu_2$, we deduce from \eqref{eq : decay estimate of rho and lambda}, \eqref{calculate mu_2}, \eqref{eq : estimation of u-mean of u as u} and H\"{o}lder's inequality that 
    \begin{align}
        \mathrm{I}_1&\leq c\miint{Q_{2\varrho_w}^{\lambda_w}(w)}\frac{|u|^p}{\left(2\lambda_w^{\frac{p-2}{2}}\varrho_w\right)^p} \, dz \nonumber\\
        &\leq c\lambda_w^{\frac{(2-p)p}{2}}\left(\miint{Q_{2\varrho_w}^{\lambda_w}(w)}\frac{|u|^s}{(2\varrho_w)^s} \, dz\right)^{\frac{p}{s}} \nonumber\\ 
        &\leq c\lambda_w^{\frac{(2-p)p}{2}} \varrho_w^{-p}\left(\frac{1}{|B^{\lambda_w}_{2\varrho_w}|}\sup_{t\in [0,T]}\int_{\Omega}|u|^s\,dx\right)^\frac{p}{s} \nonumber \\ 
        &\leq c \lambda_w^{\frac{p(2-p)(n+s)}{2s}}\varrho_w^{-\frac{p(n+s)}{s}}\nonumber\\ 
        &\leq c \varrho_w^{-\frac{p(n+s)((2-p)(n+2)+2\mu_2)}{2s\mu_2}}\nonumber\\ \label{eq : estimate of I_1 in lem_no occurence}
        &= c \varrho_w^{-\frac{p(n+s)}{\mu_s}}
    \end{align}
    for some $c>1$ depending on $n,p,q,s,\nu,L,|\Omega_T|,\|u\|_{C(0,T;L^s(\Omega))},\|H(z,|Du|)\|_{L^1(\Omega_T)}$ and $\|H(z,|F|)\|_{L^{\gamma_s}(\Omega_T)}$.

    \textbf{Estimate of $\mathrm{I}_2$.} By \eqref{cond : no occurence}, we have
    $$
    \mathrm{I}_2\leq c\varrho_w^\alpha \miint{Q_{2\varrho_w}^{\lambda_w}(w)}\frac{\Big|u-u_{Q_{2\varrho_w}^{\lambda_w}(w)}\Big|^q}{\left(2\lambda_w^{\frac{p-2}{2}}\varrho_w\right)^q}\,dz
    $$
    for some $c=c(n,p,q,\alpha,\nu,L,[a]_\alpha)>1$. We divide the cases according to $q$ and $s$.

    If $q\leq s$, since \eqref{cond : main assumption with s}$_2$ implies that $-\frac{p(n+s)}{\mu_s}\leq \alpha -\frac{q(n+s)}{\mu_s}<0$, it follows from \eqref{eq : decay estimate of rho and lambda}, \eqref{calculate mu_2}, \eqref{eq : estimation of u-mean of u as u} and H\"{o}lder's inequality that
    \begin{align}
        \mathrm{I}_2&\leq c\varrho_w^\alpha \left(\miint{Q_{2\varrho_w}^{\lambda_w}(w)}\frac{\Big|u-u_{Q_{2\varrho_w}^{\lambda_w}(w)}\Big|^s}{\left(2\lambda_w^{\frac{p-2}{2}}\varrho_w\right)^s}\,dz\right)^{\frac{q}{s}}\nonumber\\
        &\leq c\lambda_w^{\frac{(2-p)(n+s)q}{2s}}\varrho_w^{\alpha-\frac{q(n+s)}{s}} \nonumber\\
        &\leq c\varrho_w^{\alpha-\frac{q(n+s)((2-p)(n+2)+2\mu_2)}{2s\mu_2}} \nonumber\\
        &\leq c\varrho_w^{\alpha-\frac{q(n+s)}{\mu_s}} \nonumber\\ \label{eq : estimate of I_2 in lem_no occurence with q<=s}
        &\leq c\varrho_w^{-\frac{p(n+s)}{\mu_s}}
    \end{align}
    for some $c=c(\operatorname{data}_s)>1$.

    Finally, assume that $q>s$. Then we obtain
    \begin{align*}
        \mathrm{I}_2&\leq c\varrho_w^\alpha\miint{Q_{2\varrho_w}^{\lambda_w}(w)}\frac{\Big|u-u_{B^{\lambda_w}_{2\varrho_w}(x_0)}(t)\Big|^q}{\left(2\lambda_w^{\frac{p-2}{2}}\varrho_w\right)^q}\,dx dt\\
        &\qquad+c\varrho_w^\alpha\dashint_{I_{2\varrho_w}(t_0)}\frac{\Big|u_{B^{\lambda_w}_{2\varrho_w}(x_0)}(t)-u_{Q_{2\varrho_w}^{\lambda_w}(w)}\Big|^q}{\left(2\lambda_w^{\frac{p-2}{2}}\varrho_w\right)^q}\, dt \\ 
        &=\mathrm{J}_1+\mathrm{J}_2,
    \end{align*}
    where $w=(x_0, t_0)$. By the Gagliardo-Nirenberg multiplicative embedding inequality in \cite[Theorem 2.1 and Remark 2.1 in Section I]{1993_Degenerate_parabolic_equations_DiBenedetto}, we get
    $$
    \begin{aligned}
        \mathrm{J}_1&= c\varrho_w^\alpha\dashint_{I_{2\varrho_w}(t_0)}\left(\dashint_{B^{\lambda_w}_{2\varrho_w}(x_0)}\frac{\Big|u-u_{B_{2\varrho_w}(x_0)}(t)\Big|^q}{\left(2\lambda_w^{\frac{p-2}{2}}\varrho_w\right)^q}\,dx\right)\,dt\\
        &\leq c\varrho_w^\alpha\dashint_{I_{2\varrho_w}(t_0)}\left(\dashint_{B^{\lambda_w}_{2\varrho_w}(x_0)} |Du|^p\, dx\right)^{\frac{q\theta_1}{p}}\\
        &\qquad\qquad \times \left(\dashint_{B^{\lambda_w}_{2\varrho_w}(x_0)}\frac{\Big|u-u_{B^{\lambda_w}_{2\varrho_w}(x_0)}(t)\Big|^s}{\left(2\lambda_w^{\frac{p-2}{2}}\varrho_w\right)^s}\,dx\right)^{\frac{q(1-\theta_1)}{s}}\,dt,
    \end{aligned}
    $$
    where $\theta_1=\left(\frac{1}{s}-\frac{1}{q}\right)\left(\frac{1}{n}+\frac{1}{s}-\frac{1}{p}\right)^{-1}$ and $c=c(n,p,q,s,\nu,L)>1$. Since $s<q\leq 4$, $\frac{2n}{n+2}<p\leq 2\leq n$, we observe that $\theta_1$ is in $[0,1]$. Now, by \eqref{eq : decay estimate of rho and lambda}, \eqref{calculate mu_2}, \eqref{cond : p-phase condition}$_3$ and H\"{o}lder's inequality, we have
    \begin{equation*}
    \begin{aligned}
        \mathrm{J}_1&\leq c\lambda_w^{\frac{q(n+s)(2-p)(1-\theta_1)}{2s}}\varrho_w^{\alpha-\frac{q(n+s)(1-\theta_1)}{s}}\left(\miint{Q_{2\varrho_w}^{\lambda_w}(w)} |Du|^p\, dz\right)^{\frac{q\theta_1}{p}} \\
        &\leq c\varrho_w^{\alpha-\frac{q(n+s)(1-\theta_1)}{s}}\lambda_w^{\frac{q(n+s)(2-p)(1-\theta_1)}{2s}+q\theta_1} \\
        &\leq c\varrho_w^{\alpha-\frac{q(n+s)(1-\theta_1)(2\mu_2+(2-p)(n+2))}{2s\mu_2}-\frac{q\theta_1(n+2)}{\mu_2}}=c\varrho_w^{\alpha-\frac{q(n+s)(1-\theta_1)}{\mu_s}-\frac{qs\theta_1(n+2)}{2\mu_s}}\\
        &= c\varrho_w^{\alpha-\frac{2q(n+s)-2q\theta_1(n+s)+qs\theta_1(n+2)}{2\mu_s}}= c\varrho_w^{\alpha-\frac{2q(n+s)-qn\theta_1(s-2)}{2\mu_s}}\leq c\varrho_w^{\alpha-\frac{q(n+s)}{\mu_s}}
    \end{aligned}
    \end{equation*}
    for some $c=c(\operatorname{data}_s)>1$. Since $-\frac{p(n+s)}{\mu_s}\leq \alpha -\frac{q(n+s)}{\mu_s}<0$, we get
    \begin{equation*}
        \mathrm{J}_1\leq c\varrho_w^{-\frac{p(n+s)}{\mu_s}}.
    \end{equation*}
    Next, \eqref{eq : decay estimate of rho and lambda}, \eqref{calculate mu_2}, \eqref{cond : main assumption with s} and H\"{o}lder's inequality imply that
    $$
    \begin{aligned}
        \mathrm{J}_2&\leq c\lambda_w^{\frac{(2-p)q}{2}}\varrho_w^{\alpha-q}\dashint_{I_{2\varrho_w}(t_0)}\dashint_{I_{2\varrho_w}(t_0)}|u_{B^{\lambda_w}_{2\varrho_w}(x_0)}(t)-u_{B^{\lambda_w}_{2\varrho_w}(x_0)}(\tilde{t})|^q\, dt d\tilde{t}\\
        &\leq c\lambda_w^{\frac{(2-p)q}{2}}\varrho_w^{\alpha-q} \dashint_{I_{2\varrho_w}(t_0)}|u_{B^{\lambda_w}_{2\varrho_w}(x_0)}(t)|^q\, dt\\
        &\leq c\lambda_w^{\frac{(2-p)q}{2}}\varrho_w^{\alpha-q}\sup_{I_{2\varrho_w}(t_0)}\left(\dashint_{B^{\lambda_w}_{2\varrho_w}(x_0)}|u|\, dx\right)^q\\
        &\leq c\lambda_w^{\frac{(2-p)q}{2}}\varrho_w^{\alpha-q}\sup_{I_{2\varrho_w}(t_0)}\left(\dashint_{B^{\lambda_w}_{2\varrho_w}(x_0)}|u|^s\, dx\right)^\frac{q}{s}\\
        &\leq c\lambda_w^{\frac{(2-p)q(n+s)}{2s}}\varrho_w^{\alpha-\frac{q(n+s)}{s}}\leq c \varrho_w^{\alpha-\frac{q(n+s)((n+2)(2-p)+2\mu_2)}{2s\mu_2}}\\
        &=c\varrho_w^{\alpha-\frac{q(n+s)}{\mu_s}}\leq c\varrho_w^{-\frac{p(n+s)}{\mu_s}}
    \end{aligned}
    $$
    for some $c=c(\operatorname{data}_s)>1$.
    Thus, we obtain
    \begin{equation}\label{eq : estimate of I_2 in lem_no occurence with q>s}
        \mathrm{I}_2\leq c \varrho_w^{-\frac{p(n+s)}{\mu_s}}.
    \end{equation}
    We then conclude from \eqref{eq : estimate of I_2 in lem_no occurence with q<=s} and \eqref{eq : estimate of I_2 in lem_no occurence with q>s} that
    \begin{equation}\label{eq : estimate of I_2 in lem_no occurence}
        \mathrm{I}_2\leq c\varrho_w^{-\frac{p(n+s)}{\mu_s}},
    \end{equation}
    where $c=c(\operatorname{data}_s)>1$.

    \textbf{Estimate of $\mathrm{I}_4$.} We obtain from H\"{o}lder's inequality, \eqref{cond : source term with s} and \eqref{eq : decay estimate of rho and lambda} that
    \begin{equation}\label{eq : estimate of I_4 in lem_no occurence}
    \begin{aligned}
        \mathrm{I}_4 \leq c\left(\miint{Q_{2\varrho_w}^{\lambda_w}(w)} [H(z,|F|)]^{\gamma_s}\,dz\right)^\frac{1}{\gamma_s} &\leq c \|H(z,|F|)\|_{L^{\gamma_s}(\Omega_T)}\lambda_w^{\frac{(2-p)n}{2\gamma_s}}\varrho_w^{-\frac{n+2}{\gamma_s}}\\
        &\leq c\varrho_w^{-\frac{2p(n+s)}{s\mu_2}}= c \varrho_w^{-\frac{p(n+s)}{\mu_s}},
    \end{aligned}
    \end{equation}
    where $c=c(n,p,q,s,\nu,L,|\Omega_T|,\|H(z,|Du|)\|_{L^1(\Omega_T)},\|H(z,|F|)\|_{L^{\gamma_s}(\Omega_T)})>1$.

    Combining \eqref{eq : Caccioppoli ineq in no occurence}, \eqref{eq : estimate of I_3 in lem_no occurence}, \eqref{eq : estimate of I_1 in lem_no occurence}, \eqref{eq : estimate of I_2 in lem_no occurence} and \eqref{eq : estimate of I_4 in lem_no occurence} gives
    $$
    \lambda_w^p\leq c\varrho_w^{-\frac{p(n+s)}{\mu_s}},
    $$
    which completes the proof.
\end{proof}

Now, we show that the case \eqref{case : no occurence} never occurs. If \eqref{case : no occurence} holds, we have
$$
K\lambda_w^p=\sup_{Q_{10\varrho_w}(w)} a(\cdot)\frac{K\lambda_w^p}{\displaystyle\sup_{Q_{10\varrho_w}(w)} a(\cdot)}\leq 40[a]_\alpha\varrho_w^\alpha \lambda_w^q.
$$
When \eqref{cond : main assumption with infty} holds, then it follows from Lemma \ref{lem : no occurence with s=infty} and \eqref{def : definition of K and kappa} that
$$
K\lambda_w^p\leq 40[a]_\alpha\varrho_w^\alpha \lambda_w^q\leq 40c_b[a]_\alpha\lambda_w^{q-\frac{\alpha(p(n+2)-2n)}{4}}\leq 40c_b[a]_\alpha\lambda_w^{p}<\frac{K}{2}\lambda_w^p,
$$
which is a contradiction. Similarly, when \eqref{cond : main assumption with s} holds, then it follows from Lemma \ref{lem : no occurence with s<infty} and \eqref{def : definition of K and kappa} that
$$
K\lambda_w^p\leq 40[a]_\alpha\varrho_w^\alpha \lambda_w^q\leq 40c_s[a]_\alpha\lambda_w^{q-\frac{\alpha\mu_s}{n+s}}\leq 40c_s[a]_\alpha\lambda_w^{p}<\frac{K}{2}\lambda_w^p,
$$
which is a contradiction. Thus, the case \eqref{case : no occurence} can never happen under either \eqref{cond : main assumption with infty} or \eqref{cond : main assumption with s}.
\section{\bf Reverse H\"{o}lder inequality}\label{section 4}
Let $z_0=(x_0,t_0)\in\Psi(\Lambda)$ be a Lebesgue point of $|Du(z)|^p+a(z)|Du(z)|^q$, where $\Lambda$ is defined in Section \ref{section 3}. In this section, we establish reverse H\"{o}lder inequalities separately in each intrinsic cylinder. For this, we need the following auxiliary lemmas, called the Gagliardo-Nirenberg inequality and a standard iteration lemma.
\begin{lemma}[\cite{Hasto_2021}, Lemma 2.12]   \label{lem : Gagliardo-Nirenberg inequality}
    For an open ball $B_{\rho}(x_0)\subset \mr^n$, take $p_1,\,p_2,\,p_3\in[1,\infty)$, $\vartheta\in(0,1)$ and let $\psi\in W^{1,p_2}(B)$. Suppose that
    $$
    -\frac{n}{p_1}\leq \vartheta\left(1-\frac{n}{p_2}\right)-(1-\vartheta)\frac{n}{p_3}.
    $$
    Then there exists a positive constant $c=c(n,p_1)$ such that
    $$
    \dashint_B \frac{|\psi|^{p_1}}{\rho^{p_1}}\, dx\leq c \left(\dashint_{B_\rho(x_0)}\left[\frac{|\psi|^{p_2}}{\rho^{p_2}}+|D\psi|^{p_2}\right]dx\right)^{\frac{\vartheta p_1}{p_2}}\left(\dashint_{B_{\rho}(x_0)}\frac{|\psi|^{p_3}}{\rho^{p_3}}\,dx\right)^{\frac{(1-\vartheta)p_1}{p_3}}.
    $$
\end{lemma}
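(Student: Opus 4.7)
The plan is to reduce the claimed inequality to the classical Gagliardo--Nirenberg interpolation inequality on the unit ball via a change of variables, and then absorb the inequality (rather than equality) in the scaling hypothesis by H\"older's inequality on the bounded set.

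First, I would rescale by setting $v(y) := \psi(x_0+\rho y)$ for $y \in B_1(0)$. Then $\nabla v(y) = \rho\,\nabla \psi(x_0+\rho y)$, and the change of variables $x = x_0+\rho y$ turns each averaged quantity in the statement into the corresponding average over $B_1$: the left-hand side equals $\rho^{-p_1}\dashint_{B_1}|v|^{p_1}\,dy$, the first right-hand factor equals $\rho^{-p_2}\dashint_{B_1}(|v|^{p_2}+|Dv|^{p_2})\,dy$, and the second right-hand factor equals $\rho^{-p_3}\dashint_{B_1}|v|^{p_3}\,dy$. Because $\vartheta p_1 + (1-\vartheta)p_1 = p_1$, the powers of $\rho$ cancel, and the problem reduces to proving the scale-invariant estimate
\[
\dashint_{B_1}|v|^{p_1}\,dy \leq c\left(\dashint_{B_1}[|v|^{p_2}+|Dv|^{p_2}]\,dy\right)^{\vartheta p_1/p_2}\left(\dashint_{B_1}|v|^{p_3}\,dy\right)^{(1-\vartheta)p_1/p_3}
\]
for $v \in W^{1,p_2}(B_1)$.

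Next, I would introduce the critical interpolation exponent $p^*$ determined by $1/p^* = \vartheta(1/p_2-1/n)+(1-\vartheta)/p_3$ (with the convention $p^* = \infty$ when the right-hand side is nonpositive), and apply the classical multiplicative Gagliardo--Nirenberg inequality on the bounded Lipschitz domain $B_1$:
\[
\|v\|_{L^{p^*}(B_1)}\leq c\,\|v\|_{W^{1,p_2}(B_1)}^{\vartheta}\,\|v\|_{L^{p_3}(B_1)}^{1-\vartheta}.
\]
A direct rearrangement of the hypothesis $-n/p_1 \leq \vartheta(1-n/p_2)-(1-\vartheta)n/p_3$ yields $1/p_1 \geq 1/p^*$, i.e.\ $p_1 \leq p^*$. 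Hence H\"older's inequality on the bounded set $B_1$ upgrades the $L^{p^*}$-bound to an $L^{p_1}$-bound at the cost of a constant depending only on $n$ and $p_1$. Raising both sides to the power $p_1$ and rewriting $\|v\|_{W^{1,p_2}(B_1)}^{p_2}$ as the average of $|v|^{p_2}+|Dv|^{p_2}$ (up to absolute constants) produces the desired bound.

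The main technical obstacle is to select a version of Gagliardo--Nirenberg that is genuinely uniform across the regimes $p_2 < n$, $p_2 = n$, and $p_2 > n$. The cleanest route is to appeal to the multiplicative form on bounded Lipschitz domains in which the $L^{p_2}$ part of the full $W^{1,p_2}$-norm is retained on the right-hand side: the associated Poincar\'e-type term absorbs the low-frequency part of $v$ and makes the inequality valid regardless of whether $p_2$ lies below, at, or above the Sobolev threshold, thereby eliminating any case analysis.
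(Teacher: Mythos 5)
This lemma is not proved in the paper at all: it is imported verbatim as Lemma 2.12 of the cited reference [Hasto\_2021], so there is no in-paper argument to compare against. Your reduction is the natural one and it does establish the inequality: the rescaling $v(y)=\psi(x_0+\rho y)$ is computed correctly (the powers of $\rho$ cancel because $\vartheta p_1+(1-\vartheta)p_1=p_1$), the hypothesis rearranges to $1/p_1\geq 1/p^*$ with $1/p^*=\vartheta(1/p_2-1/n)+(1-\vartheta)/p_3$, and H\"older on $B_1$ then passes from $L^{p^*}$ to $L^{p_1}$ with constant $1$ when written with integral averages. Two points, however, deserve attention.

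First, the constant. The lemma asserts $c=c(n,p_1)$, but routing through the classical Gagliardo--Nirenberg inequality at the exact exponent $p^*$ produces a constant depending also on $p_2$, $p_3$ and $\vartheta$, and for fixed $(n,p_1)$ the admissible parameter set is non-compact: as $p_2\uparrow n$ one has $p^*\to\infty$ and the Sobolev/GN constant degenerates, and the subsequent H\"older step does not repair this. So your argument proves the inequality for each fixed admissible $(p_2,p_3,\vartheta)$ but not with the stated uniform dependence. The way such lemmas are usually proved with $c=c(n,p_1)$ is to apply the $W^{1,1}(B_1)\hookrightarrow L^{n/(n-1)}(B_1)$ embedding to $|v|^{\gamma}$ with $\gamma$ determined by $p_1$ and $n$, and then use H\"older and Young exclusively in integral-average form, where every H\"older constant equals $1$; the inequality (rather than equality) in the scaling hypothesis is exactly what makes those H\"older applications admissible. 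In the present paper this distinction is harmless, since the lemma is only invoked for parameters ultimately fixed in terms of the data, but the statement as written is stronger than what you prove. Second, the case $\vartheta(1/p_2-1/n)+(1-\vartheta)/p_3\leq 0$, which forces $p_2>n$, is not covered by "the convention $p^*=\infty$": the classical multiplicative inequality with the prescribed exponent $\vartheta$ holds verbatim only when that quantity is exactly $0$. When it is strictly negative you must first apply GN at the exponent $\vartheta'<\vartheta$ for which the sum vanishes and then restore the exponents using $\|v\|_{L^{p_3}(B_1)}\leq c\,\|v\|_{W^{1,p_2}(B_1)}$ (Morrey, available precisely because $p_2>n$); this extra line is missing from your write-up.
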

\begin{lemma}[\cite{2003_Giusti_Direct_methods_in_the_calculus_of_variations}, Lemma 6.1]   \label{lem : a standard iteration lemma}
    Let $0<\rho<\tau<\infty$, and let $g:[\rho,\tau]\rightarrow [0,\infty)$ be a bounded function. Suppose that
    $$
    g(\rho_1)\leq \vartheta g(\rho_2)+\frac{A}{(\rho_2-\rho_1)^\gamma}+B
    $$
    holds for all $0<\rho\leq \rho_1<\rho_2\leq \tau$, where $\vartheta \in (0,1)$, $A,B\geq 0$ and $\gamma>0$. Then there exists a positive constant $c$ depending on $\vartheta$ and $\gamma$ such that
    $$
    g(\rho)\leq c\left(\frac{A}{(\tau-\rho)^{\gamma}}+B\right).
    $$
\end{lemma}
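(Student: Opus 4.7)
The plan is to run a geometric iteration on a sequence of radii converging from $\rho$ to $\tau$. I fix a parameter $\tau_0\in(0,1)$ (to be chosen momentarily) and define the increasing sequence $\rho_i=\rho+(1-\tau_0^i)(\tau-\rho)$ for $i\geq 0$, so that $\rho_0=\rho$, $\rho_i\nearrow\tau$, and $\rho_{i+1}-\rho_i=(1-\tau_0)\tau_0^i(\tau-\rho)$. Every $\rho_i$ lies in $[\rho,\tau]$, so the hypothesis applies to each consecutive pair $(\rho_i,\rho_{i+1})$ and yields
\[
g(\rho_i)\leq \vartheta\,g(\rho_{i+1})+\frac{A}{(1-\tau_0)^\gamma \tau_0^{i\gamma}(\tau-\rho)^\gamma}+B.
\]

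Next, I would iterate this bound $k$ times starting at $i=0$, obtaining the telescoping estimate
\[
g(\rho)\leq \vartheta^k g(\rho_k)+\frac{A}{(1-\tau_0)^\gamma(\tau-\rho)^\gamma}\sum_{i=0}^{k-1}\left(\frac{\vartheta}{\tau_0^\gamma}\right)^i+B\sum_{i=0}^{k-1}\vartheta^i.
\]
The decisive step is the choice of $\tau_0$. Since $\vartheta<1$, one can pick any $\tau_0\in(\vartheta^{1/\gamma},1)$ -- for instance $\tau_0=((\vartheta+1)/2)^{1/\gamma}$ -- so that $\vartheta/\tau_0^\gamma<1$. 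Both geometric series then converge absolutely as $k\to\infty$, with sums depending only on $\vartheta$ and $\gamma$, while the boundedness of $g$ guarantees $\vartheta^k g(\rho_k)\to 0$.

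Passing to the limit $k\to\infty$ yields the claimed estimate $g(\rho)\leq c\bigl(A(\tau-\rho)^{-\gamma}+B\bigr)$ with $c=c(\vartheta,\gamma)$. The only delicate point is the balancing inherent in the choice of $\tau_0$: it must be large enough that $\tau_0^\gamma>\vartheta$ so that the $A$-series is summable, while the upper bound $\tau_0<1$ is automatic given $\vartheta<1$. I therefore do not anticipate any genuine obstacle -- once the geometric sequence is in place and the admissible range for $\tau_0$ is identified, the remainder of the argument is pure bookkeeping with the two resulting geometric sums.
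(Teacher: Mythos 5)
Your proof is correct and is essentially the standard argument from the cited reference (Giusti, Lemma 6.1): interpolate a geometric sequence of radii, iterate, and choose $\tau_0\in(\vartheta^{1/\gamma},1)$ so that both geometric series converge while boundedness of $g$ kills the term $\vartheta^k g(\rho_k)$. The paper itself only cites this lemma without proof, and your write-up fills in exactly the intended argument with no gaps.
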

\subsection{\bf The $p$-phase case}
We assume \eqref{cond : p-phase condition} and estimate the last term in Lemma \ref{lem : semi-Parabolic Poincare inequality}.
\begin{lemma}   \label{lem : last term estimate in Lemma lem : semi-Parabolic Poincare inequality whenever p-intrinsic cylinder}
    Let $u$ be a weak solution to \eqref{eq : the main equation} and assume that $Q_{4\rho}^\lambda(z_0)\subset \Omega_T$ satisfies \eqref{cond : p-phase condition}. Then, for $\sigma\in[2\rho,4\rho]$, there exists a constant $c=c(\operatorname{data})>1$ such that
    $$
    \begin{aligned}
        &\miint{Q_\sigma^\lambda(z_0)} \left(|Du|^{p-1}+a(z)|Du|^{q-1}+|F|^{p-1}+a(z)|F|^{q-1}\right) dz\\
        &\qquad\leq c\miint{Q_\sigma^\lambda(z_0)} (|Du|+|F|)^{p-1}\, dz+c\lambda^{-1+\frac{p}{q}}\miint{Q_\sigma^\lambda(z_0)} a(z)^{\frac{q-1}{q}}(|Du|+|F|)^{q-1}\, dz.\\
    \end{aligned}
    $$
\end{lemma}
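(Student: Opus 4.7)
\smallskip

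\noindent\textbf{Proof plan.} The estimate is essentially a pointwise manipulation of the integrand that exploits the $p$-phase bound \eqref{cond : p-phase condition}$_1$. The plan is to split $a(z)=a(z)^{1/q}\cdot a(z)^{(q-1)/q}$, absorb the first factor via \eqref{cond : p-phase condition}$_1$, and then use the elementary inequality $|Du|^{m}+|F|^{m}\leq 2(|Du|+|F|)^{m}$, valid for any $m\geq 0$, with $m=p-1$ and $m=q-1$ (both admissible since $p,q>1$).

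First I would verify the inclusion $Q_\sigma^\lambda(z_0)\subset Q_{10\rho}(z_0)$, which is what allows the pointwise use of \eqref{cond : p-phase condition}$_1$ on the whole domain of integration. The time-radius $\sigma^2$ is at most $(4\rho)^2<(10\rho)^2$, so the time-direction inclusion is immediate; for the spatial direction one needs $\lambda^{(p-2)/2}\sigma\leq 10\rho$, which follows from $\sigma\leq 4\rho$, $p\leq 2$ and $\lambda\geq 1$. The last bound is itself a consequence of the normalization in \eqref{def : lambda_0 and Lambda_0}: the $+1$ inside the integrand gives $\lambda_0\geq 1$, and then
$$
H_w(\lambda)=\Lambda\geq \Lambda_0=\lambda_0^p+\sup_{Q_{2r}(z_0)} a(\cdot)\,\lambda_0^q\geq \lambda_0^p+a(w)\lambda_0^q=H_w(\lambda_0),
$$
together with the strict monotonicity of $H_w$ on $(0,\infty)$, yields $\lambda\geq\lambda_0\geq 1$.

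The main step is then a one-line pointwise estimate. Taking $q$-th roots in \eqref{cond : p-phase condition}$_1$ gives $a(z)^{1/q}\leq K^{1/q}\lambda^{-1+p/q}$ for every $z\in Q_{10\rho}(z_0)$. Writing $a(z)=a(z)^{1/q}\cdot a(z)^{(q-1)/q}$, inserting this bound into the two $q$-power summands, and combining with the elementary inequalities above, I obtain
$$
\begin{aligned}
&|Du|^{p-1}+a(z)|Du|^{q-1}+|F|^{p-1}+a(z)|F|^{q-1}\\
&\qquad\leq 2(|Du|+|F|)^{p-1}+2K^{1/q}\lambda^{-1+p/q}\,a(z)^{(q-1)/q}(|Du|+|F|)^{q-1}
\end{aligned}
$$
pointwise on $Q_\sigma^\lambda(z_0)$. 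Averaging in $z$ then produces the desired inequality with $c=c(q,K)=c(\operatorname{data})$.

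I do not anticipate any substantive obstacle here: the lemma is a direct repackaging of the $p$-phase bound one power of $a$ at a time, and all of the delicate stopping-time and intrinsic-geometry work has already been carried out in Section \ref{section 3}. The only subtlety worth flagging is the verification $\lambda\geq 1$, which is precisely what ensures the inclusion into $Q_{10\rho}(z_0)$ and legitimizes the pointwise use of \eqref{cond : p-phase condition}$_1$.
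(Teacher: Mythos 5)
Your proof is correct and follows essentially the same route as the paper: split $a(z)=a(z)^{1/q}a(z)^{(q-1)/q}$, bound $a(z)^{1/q}\leq\big(\sup_{Q_{10\rho}(z_0)}a\big)^{1/q}\leq K^{1/q}\lambda^{-1+p/q}$ via \eqref{cond : p-phase condition}$_1$, and finish with the elementary inequality $|Du|^{m}+|F|^{m}\leq 2(|Du|+|F|)^{m}$. Your explicit verification of the inclusion $Q_\sigma^\lambda(z_0)\subset Q_{10\rho}(z_0)$ (via $\lambda\geq 1$ and $p\leq 2$) is a detail the paper leaves implicit, but it is not a departure from the argument.
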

\begin{proof}
    By \eqref{cond : p-phase condition}$_1$, there exists a constant $c=c(\operatorname{data})>1$ such that
    \begin{align*}
        &\miint{Q_\sigma^\lambda(z_0)} \left(|Du|^{p-1}+a(z)|Du|^{q-1}+|F|^{p-1}+a(z)|F|^{q-1}\right) dz\\
        &\qquad\leq \miint{Q_\sigma^\lambda(z_0)} \left(|Du|^{p-1}+|F|^{p-1}\right)\, dz\\
        &\qquad\qquad +\sup_{w\in Q_{10\rho}(z_0)} a(w)^{\frac{1}{q}}\miint{Q_\sigma^\lambda(z_0)} a(z)^{\frac{q-1}{q}}(|Du|^{q-1}+|F|^{q-1})\, dz\\
        &\quad\quad\leq c\miint{Q_\sigma^\lambda(z_0)} (|Du|+|F|)^{p-1}\, dz+ c\lambda^{-1+\frac{p}{q}}\miint{Q_\sigma^\lambda(z_0)} a(z)^{\frac{q-1}{q}}(|Du|+|F|)^{q-1}\, dz.
    \end{align*}
\end{proof}
\subsubsection{Assumption \eqref{cond : main assumption with infty}}   Now, let $u$ be a weak solution to \eqref{eq : the main equation} and assume that $Q_{4\rho}^\lambda(z_0)\subset \Omega_T$ satisfies \eqref{cond : p-phase condition}. Moreover, we assume \eqref{cond : main assumption with infty}. First, we establish a $p$-intrinsic parabolic Poincar\'{e} inequality.
    \begin{lemma}\label{lem : p-intrinsic parabolic Poincare inequality of p-term in p-intrinsic cylinder with bounded solution}
    For $\sigma\in[2\rho,4\rho]$ and $\theta \in \left(\frac{q-1}{p},1\right]$, there exists a constant $c=c(\operatorname{data}_b)$ $>1$ such that
    \begin{align*}
        &\miint{Q_\sigma^\lambda(z_0)}\frac{\Big|u-u_{Q_\sigma^\lambda(z_0)}\Big|^{\theta p}}{(\lambda^{\frac{p-2}{2}}\sigma)^{\theta p}}\, dz\\
        &\qquad\qquad\leq c\miint{Q_\sigma^\lambda(z_0)} [H(z,|Du|)]^\theta\, dz\\
        &\qquad\qquad\quad +c\lambda^{\left(2-p+\frac{\alpha(p(n+2)-2n)}{8}\right)\theta p}\left(\miint{Q_\sigma^\lambda(z_0)} (|Du|+|F|)^{\theta p}\, dz\right)^{p-1-\frac{\alpha(p(n+2)-2n)}{8}}.
    \end{align*}
\end{lemma}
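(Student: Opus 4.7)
The plan is to invoke Lemma \ref{lem : semi-Parabolic Poincare inequality} on the $p$-intrinsic cylinder $Q_\sigma^\lambda(z_0)$ with the choice $m = p$, clean up the resulting source/gradient deficit by combining Lemma \ref{lem : last term estimate in Lemma lem : semi-Parabolic Poincare inequality whenever p-intrinsic cylinder} with the $p$-phase bound $\sup_{Q_{10\rho}(z_0)} a(\cdot) \leq K\lambda^{p-q}$ coming from \eqref{cond : p-phase condition}$_1$, and finally upgrade the leftover $(p-1)$- and $(q-1)$-averages to the desired $\theta p$-average via H\"older's inequality. The additional gain $\delta \coloneq \alpha(p(n+2)-2n)/8$ in the $\lambda$-exponent will come from recycling the intrinsic scaling $\miint{Q_\sigma^\lambda(z_0)}[H(z,|Du|)+H(z,|F|)]\,dz < \lambda^p$ provided by \eqref{cond : p-phase condition}$_2$.

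Concretely, Lemma \ref{lem : semi-Parabolic Poincare inequality} with $R = \lambda^{(p-2)/2}\sigma$ and $\ell = \sigma^2$ (so that $\ell/R^2 = \lambda^{2-p}$) yields the first term of the conclusion directly and leaves a deficit of the schematic form $c(\lambda^{2-p}\mathcal{I})^{\theta p}$, where $\mathcal{I}$ is the average of $|Du|^{p-1}+a|Du|^{q-1}+|F|^{p-1}+a|F|^{q-1}$. Lemma \ref{lem : last term estimate in Lemma lem : semi-Parabolic Poincare inequality whenever p-intrinsic cylinder} rewrites $\mathcal{I}$ as $c\miint{Q_\sigma^\lambda(z_0)}(|Du|+|F|)^{p-1}\,dz + c\lambda^{-1+p/q}\miint{Q_\sigma^\lambda(z_0)} a^{(q-1)/q}(|Du|+|F|)^{q-1}\,dz$, and inserting $a^{(q-1)/q} \leq (K\lambda^{p-q})^{(q-1)/q}$ into the second piece produces a $\lambda^{p-q}$-coefficient after elementary algebra. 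Since $\theta > (q-1)/p \geq (p-1)/p$, H\"older's inequality then promotes both averages to $J \coloneq \miint{Q_\sigma^\lambda(z_0)}(|Du|+|F|)^{\theta p}\,dz$, and raising to the $\theta p$-th power bounds the deficit by $c\lambda^{(2-p)\theta p}J^{p-1} + c\lambda^{(2-q)\theta p}J^{q-1}$.

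To reach the exponent structure stated in the lemma, I would invoke \eqref{cond : p-phase condition}$_2$ -- available since $\sigma \leq 4\rho \leq 2\kappa\rho$ by $\kappa = 20K \geq 20$ -- together with Jensen's inequality (valid because $\theta \leq 1$) to obtain $J \leq c\lambda^{\theta p}$, and then split $J^{p-1} = J^{p-1-\delta}\cdot J^{\delta}$ and $J^{q-1} = J^{p-1-\delta}\cdot J^{q-p+\delta}$. Bounding the excess factors $J^{\delta} \leq c\lambda^{\delta\theta p}$ and $J^{q-p+\delta} \leq c\lambda^{(q-p+\delta)\theta p}$ collapses both summands into $c\lambda^{(2-p+\delta)\theta p}J^{p-1-\delta}$, which matches the claim. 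The main obstacle is the exponent bookkeeping: one has to check $p - 1 - \delta > 0$ (which follows from $p > 2n/(n+2)$ and $\alpha \leq 1$), $q - p + \delta \geq 0$ (immediate), and the identity $(2-q)\theta p + (q-p+\delta)\theta p = (2-p+\delta)\theta p$ (trivial). Assumption \eqref{cond : main assumption with infty} is used only indirectly here: through Lemma \ref{lem : no occurence with s=infty} it excludes Case \eqref{case : no occurence} and hence makes the $p$-phase scaling $a \leq K\lambda^{p-q}$ actually available, and the specific value $\delta = \alpha(p(n+2)-2n)/8$ is admissible precisely because the gap bound in \eqref{cond : main assumption with infty} forces $q - p \leq 2\delta$.
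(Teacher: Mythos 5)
Your proposal is correct and takes essentially the same route as the paper's proof: apply Lemma \ref{lem : semi-Parabolic Poincare inequality} with $m=p$ on $Q^\lambda_\sigma(z_0)$, control the deficit via Lemma \ref{lem : last term estimate in Lemma lem : semi-Parabolic Poincare inequality whenever p-intrinsic cylinder} together with $\sup a\le K\lambda^{p-q}$ from \eqref{cond : p-phase condition}$_1$, promote the $(p-1)$- and $(q-1)$-averages to $J$ by H\"older (using $\theta>\frac{q-1}{p}$), and finish with the splittings $J^{p-1}=J^{p-1-\delta}J^{\delta}$ and $J^{q-1}=J^{p-1-\delta}J^{q-p+\delta}$ combined with $J\le c\lambda^{\theta p}$ from \eqref{cond : p-phase condition}$_2$. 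The exponent checks you list are exactly the ones the paper verifies, so no gap.
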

\begin{proof}
    By Lemmas \ref{lem : semi-Parabolic Poincare inequality} and \ref{lem : last term estimate in Lemma lem : semi-Parabolic Poincare inequality whenever p-intrinsic cylinder}, there exists a positive constant $c=c(\operatorname{data}_b)$ such that 
    \begin{align*}
        \miint{Q_\sigma^\lambda(z_0)}\frac{\Big|u-u_{Q_\sigma^\lambda(z_0)}\Big|^{\theta p}}{(\lambda^{\frac{p-2}{2}}\sigma)^{\theta p}}\, dz &\leq  c \miint{Q_\sigma^\lambda(z_0)}|D u|^{\theta p} \,dz\\
        &\quad +c\left(\lambda^{2-p}\miint{Q_\sigma^\lambda(z_0)} (|Du|+|F|)^{p-1}\, dz\right)^{\theta p}\\
        &\quad +c\left(\lambda^{1-p+\frac{p}{q}}\miint{Q_\sigma^\lambda(z_0)} a(z)^{\frac{q-1}{q}} (|Du|+|F|)^{q-1}\, dz\right)^{\theta p}.
    \end{align*}
    Note that 
    \begin{align*}
        p-1-\frac{\alpha(p(n+2)-2n)}{8}&> p-1 -\frac{\alpha(p(n+2)-2n)}{4}\\
        &\geq p-1-\frac{p(n+2)-2n}{4}\\
        &= \frac{(2-p)(n-2)}{4}\geq 0.
    \end{align*}
    Then it follows from \eqref{cond : p-phase condition} and H\"{o}lder's inequality that
    $$
    \begin{aligned}
        &\lambda^{(2-p)\theta p}\left(\miint{Q_\sigma^\lambda(z_0)} (|Du|+|F|)^{p-1}\, dz\right)^{\theta p}\\
        &\;  \leq \lambda^{(2-p)\theta p}\left(\miint{Q_\sigma^\lambda(z_0)} (|Du|+|F|)^{\theta p}\, dz\right)^{p-1}\\
        &\; =  \lambda^{(2-p)\theta p} \left(\miint{Q_\sigma^\lambda(z_0)} (|Du|+|F|)^{\theta p}\, dz\right)^{\frac{\alpha(p(n+2)-2n)}{8}}\\
        &\quad \times\left(\miint{Q_\sigma^\lambda(z_0)} (|Du|+|F|)^{\theta p}\, dz\right)^{p-1-\frac{\alpha(p(n+2)-2n)}{8}}\\
        &\; \leq \lambda^{\left(2-p+\frac{\alpha(p(n+2)-2n)}{8}\right)\theta p}\left(\miint{Q_\sigma^\lambda(z_0)} (|Du|+|F|)^{\theta p}\, dz\right)^{p-1-\frac{\alpha(p(n+2)-2n)}{8}}.
    \end{aligned}
    $$
    Next, using \eqref{cond : p-phase condition} and H\"{o}lder's inequality, we have
    $$
    \begin{aligned}
        &\left(\lambda^{1-p+\frac{p}{q}}\miint{Q_\sigma^\lambda(z_0)} a(z)^{\frac{q-1}{q}}(|Du|+|F|)^{q-1}\, dz\right)^{\theta p}\\
        &\qquad \leq c\lambda^{\left(1-p+\frac{p}{q}+\frac{(p-q)(q-1)}{q}\right)\theta p}\left(\miint{Q_\sigma^\lambda(z_0)} (|Du|+|F|)^{q-1}\, dz\right)^{\theta p}\\
        &\qquad \leq c\lambda^{\left(2-p+\frac{\alpha(p(n+2)-2n)}{8}\right)\theta p}\left(\miint{Q_\sigma^\lambda(z_0)} (|Du|+|F|)^{\theta p}\, dz\right)^{p-1-\frac{\alpha(p(n+2)-2n)}{8}}
    \end{aligned}
    $$
    for some $c=c(\operatorname{data}_b)>1$. This completes the proof.
\end{proof}
\begin{lemma}\label{lem : p-intrinsic parabolic Poincare inequality of q-term in p-intrinsic cylinder with bounded solution}
    For $\sigma\in[2\rho,4\rho]$ and $\theta \in \left(\frac{q-1}{p},1\right]$, there exists a constant $c=c(\operatorname{data}_b)$ $>1$ such that
    \begin{align*}
        &\miint{Q_\sigma^\lambda(z_0)}\inf_{w\in Q_\sigma^\lambda (z_0)} a(w)^\theta \frac{\Big|u-u_{Q_\sigma^\lambda(z_0)}\Big|^{\theta q}}{(\lambda^{\frac{p-2}{2}}\sigma)^{\theta q}}\, dz\\
        &\qquad\qquad\leq c\miint{Q_\sigma^\lambda(z_0)} [H(z,|Du|)]^\theta\, dz\\
        &\qquad\qquad\quad +c\lambda^{\left(2-p+\frac{\alpha(p(n+2)-2n)}{8}\right)\theta p}\left(\miint{Q_\sigma^\lambda(z_0)} (|Du|+|F|)^{\theta p}\, dz\right)^{p-1-\frac{\alpha(p(n+2)-2n)}{8}}.
    \end{align*}
\end{lemma}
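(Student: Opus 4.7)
The plan is to run the exact strategy of Lemma \ref{lem : p-intrinsic parabolic Poincare inequality of p-term in p-intrinsic cylinder with bounded solution}, but with the semi-parabolic Poincar\'e inequality (Lemma \ref{lem : semi-Parabolic Poincare inequality}) applied at the exponent $m=q$ instead of $m=p$, so that the natural $q$-th power of the oscillation appears on the left-hand side from the outset. After combining with Lemma \ref{lem : last term estimate in Lemma lem : semi-Parabolic Poincare inequality whenever p-intrinsic cylinder}, I will multiply both sides by the scalar $\inf_{Q_\sigma^\lambda(z_0)} a^\theta$ and handle the three resulting terms separately.

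For the term arising from $\miint{Q_\sigma^\lambda(z_0)} |Du|^{\theta q}\,dz$, the pointwise bound $\inf a^\theta \leq a(z)^\theta$ gives $a(z)^\theta |Du|^{\theta q} = (a(z)|Du|^q)^\theta \leq H(z,|Du|)^\theta$, producing the desired $c\miint{Q_\sigma^\lambda(z_0)} [H(z,|Du|)]^\theta\,dz$ contribution. For the remaining two terms, the $p$-phase condition provides $\inf_{Q_\sigma^\lambda(z_0)} a \leq a(z_0) \leq \sup_{Q_{10\rho}(z_0)} a \leq K\lambda^{p-q}$ together with the pointwise estimate $a(z)^{(q-1)/q}\leq K^{(q-1)/q}\lambda^{(p-q)(q-1)/q}$ inside the $q$-integral. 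H\"older's inequality, legal because $\theta > (q-1)/p \geq (p-1)/p$, then reduces $\miint{Q_\sigma^\lambda(z_0)} (|Du|+|F|)^{p-1}\,dz$ and $\miint{Q_\sigma^\lambda(z_0)} (|Du|+|F|)^{q-1}\,dz$ to powers of $\miint{Q_\sigma^\lambda(z_0)} (|Du|+|F|)^{\theta p}\,dz$ with exponents $q(p-1)/p$ and $q(q-1)/p$ respectively. I split each exponent as $\bigl[p - 1 - \tfrac{\alpha(p(n+2)-2n)}{8}\bigr]$ plus a non-negative residual, and bound the residual factor by $c\lambda^{\theta p \cdot (\text{residual})}$ via the $p$-phase consequence $\miint{Q_\sigma^\lambda(z_0)} (|Du|+|F|)^{\theta p}\,dz \leq c\lambda^{\theta p}$, exactly as in the proof of Lemma \ref{lem : p-intrinsic parabolic Poincare inequality of p-term in p-intrinsic cylinder with bounded solution}.

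The main obstacle is the $\lambda$-exponent bookkeeping: three distinct contributions must cancel into a single power, namely $\inf a^\theta \leq K^\theta\lambda^{(p-q)\theta}$, the Poincar\'e scaling factor $\lambda^{(2-p)\theta q}$ (respectively $\lambda^{(1-p+p/q)\theta q + (p-q)(q-1)\theta}$ in the $q$-integral), and the $\lambda^{\theta p \cdot (\text{residual})}$ arising from the H\"older split. The elementary identities $q(p-1)-p(p-1)=(q-p)(p-1)$ and $q(q-1)-p(p-1)=(q-p)(q+p-1)$ make the total collapse to $\theta p\bigl[2-p+\tfrac{\alpha(p(n+2)-2n)}{8}\bigr]$ in both weighted terms, matching the target right-hand side; non-negativity of the split exponents follows from $q\geq p>1$ together with $p-1-\tfrac{\alpha(p(n+2)-2n)}{8} \geq \tfrac{(2-p)(n-2)}{4}\geq 0$, already recorded in Lemma \ref{lem : p-intrinsic parabolic Poincare inequality of p-term in p-intrinsic cylinder with bounded solution}. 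Working directly at $m=q$, rather than first reducing to Lemma \ref{lem : p-intrinsic parabolic Poincare inequality of p-term in p-intrinsic cylinder with bounded solution} via the $L^\infty$ bound on $u$, is the key structural choice, as it avoids inheriting an uncontrollable factor of the form $\sigma^{-\theta(q-p)}$.
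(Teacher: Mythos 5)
Your proposal is correct and follows essentially the same route as the paper: apply Lemma \ref{lem : semi-Parabolic Poincare inequality} at exponent $m=q$ together with Lemma \ref{lem : last term estimate in Lemma lem : semi-Parabolic Poincare inequality whenever p-intrinsic cylinder}, multiply by the constant $\inf_{Q_\sigma^\lambda(z_0)}a^\theta$, absorb the gradient term into $[H(z,|Du|)]^\theta$, and convert $\inf a^\theta$ and $a(z)^{\frac{q-1}{q}}$ into powers of $\lambda$ via \eqref{cond : p-phase condition}$_1$ before the H\"older split with residual exponent bounded through $\miint{Q_\sigma^\lambda(z_0)}(|Du|+|F|)^{\theta p}\,dz\leq c\lambda^{\theta p}$. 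Your exponent bookkeeping (collapsing to $\theta p\bigl(2-p+\tfrac{\alpha(p(n+2)-2n)}{8}\bigr)$ in both weighted terms) checks out and matches the paper's computation.
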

\begin{proof}
    By Lemmas \ref{lem : semi-Parabolic Poincare inequality} and \ref{lem : last term estimate in Lemma lem : semi-Parabolic Poincare inequality whenever p-intrinsic cylinder}, there exists a constant $c=c(\operatorname{data}_b)>1$ such that 
    \begin{align*}
        &\miint{Q_\sigma^\lambda(z_0)}\inf_{w\in Q_\sigma^\lambda (z_0)} a(w)^\theta \frac{\Big|u-u_{Q_\sigma^\lambda(z_0)}\Big|^{\theta q}}{(\lambda^{\frac{p-2}{2}}\sigma)^{\theta q}}\, dz\\
        &\qquad\qquad\leq  c \miint{Q_\sigma^\lambda(z_0)} \inf_{w\in Q_\sigma^\lambda (z_0)} a(w)^\theta|D u|^{\theta q} \,dz\\
        &\qquad\qquad\quad +c\inf_{w\in Q_\sigma^\lambda (z_0)} a(w)^\theta\left(\lambda^{2-p}\miint{Q_\sigma^\lambda(z_0)} (|Du|+|F|)^{p-1}\, dz\right)^{\theta q}\\
        &\qquad\qquad\quad +c\inf_{w\in Q_\sigma^\lambda (z_0)} a(w)^\theta\left(\lambda^{1-p+\frac{p}{q}}\miint{Q_\sigma^\lambda(z_0)} a(z)^{\frac{q-1}{q}} (|Du|+|F|)^{q-1}\, dz\right)^{\theta q}.
    \end{align*}
    By \eqref{cond : p-phase condition} and H\"{o}lder's inequality, the second term on the right-hand side is estimated by
    $$
    \begin{aligned}
        &\inf_{w\in Q_\sigma^\lambda (z_0)} a(w)^\theta\left(\lambda^{2-p}\miint{Q_\sigma^\lambda(z_0)} (|Du|+|F|)^{p-1}\, dz\right)^{\theta q}\\
        &\; \leq \inf_{w\in Q_\sigma^\lambda (z_0)} a(w)^\theta\lambda^{(2-p)\theta q}\left(\miint{Q_\sigma^\lambda(z_0)} (|Du|+|F|)^{\theta p}\, dz\right)^{\frac{q(p-1)}{p}}\\
        &\; \leq  c\lambda^{\theta p\left[1-\frac{q}{p}+(2-p)\frac{q}{p}+(p-1)(\frac{q}{p}-1)+\frac{\alpha(p(n+2)-2n)}{8}\right]}\\
        &\qquad\qquad\qquad\qquad \times\left(\miint{Q_\sigma^\lambda(z_0)} (|Du|+|F|)^{\theta p}\, dz\right)^{p-1-\frac{\alpha(p(n+2)-2n)}{8}}\\
        &\;= c\lambda^{\theta p \left(2-p+\frac{\alpha(p(n+2)-2n)}{8}\right)} \left(\miint{Q_\sigma^\lambda(z_0)} (|Du|+|F|)^{\theta p}\, dz\right)^{p-1-\frac{\alpha(p(n+2)-2n)}{8}}
    \end{aligned}
    $$
    for some $c=c(\operatorname{data}_b)> 1$. Similarly, the last term on the right-hand side is estimated by
    $$
    \begin{aligned}
        &\inf_{w\in Q_\sigma^\lambda (z_0)} a(w)^\theta\left(\lambda^{1-p+\frac{p}{q}}\miint{Q_\sigma^\lambda(z_0)} a(z)^{\frac{q-1}{q}}(|Du|+|F|)^{q-1}\, dz\right)^{\theta q}\\
        &\qquad\qquad \leq c\lambda^{\theta p \left(2-p+\frac{\alpha(p(n+2)-2n)}{8}\right)} \left(\miint{Q_\sigma^\lambda(z_0)} (|Du|+|F|)^{\theta p}\, dz\right)^{p-1-\frac{\alpha(p(n+2)-2n)}{8}}
    \end{aligned}
    $$
    for some $c=c(\operatorname{data}_b)> 1$.
\end{proof}
Next, we consider 
$$
S(u,Q_\rho^\lambda(z_0))\coloneq \sup_{I_\rho (t_0)}\dashint_{B_\rho^\lambda(x_0)} \frac{\Big|u-u_{Q_\rho^\lambda (z_0)}\Big|^2}{\left(\lambda^{\frac{p-2}{2}}\rho\right)^2}\, dx.
$$
\begin{lemma}\label{lem : estimate of S in p-intrinsic cylinder of bounded solution}
    There exists a constant $c=c(\operatorname{data}_b)>1$ such that
    $$
    S(u,Q_{2\rho}^\lambda (z_0))=\sup_{I_{2\rho}(t_0)}\dashint_{B_{2\rho}^\lambda(x_0)}\frac{\Big|u-u_{Q_{2\rho}^\lambda (z_0)}\Big|^2}{\left(2\lambda^{\frac{p-2}{2}}\rho\right)^2}\, dx \leq c\lambda^2.
    $$
\end{lemma}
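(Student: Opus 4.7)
The plan is to derive the bound from the Caccioppoli inequality in Lemma~\ref{lem : Caccioppoli inequality}, applied on the pair of $p$-intrinsic cylinders $Q_{2\rho}^{\lambda}(z_0) \subset Q_{4\rho}^{\lambda}(z_0)$. Concretely, I take $r = 2\lambda^{(p-2)/2}\rho$, $\tau = 4\rho^{2}$, $R = 4\lambda^{(p-2)/2}\rho$, and $\ell = 16\rho^{2}$, so that $r = R/2$ and $\tau = \ell/4$ fall on the boundary of the admissible range in Lemma~\ref{lem : Caccioppoli inequality}. Because the supremum-in-time term on the LHS of Caccioppoli is normalized by $\tau = 4\rho^{2}$ while $S(u, Q_{2\rho}^{\lambda}(z_0))$ is normalized by $(2\lambda^{(p-2)/2}\rho)^{2} = 4\lambda^{p-2}\rho^{2}$, the rescaling converts the Caccioppoli LHS (after dropping the nonnegative gradient term) into $\lambda^{p-2}\,S(u, Q_{2\rho}^{\lambda}(z_0))$.

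To handle the RHS I use three reductions that are available in the $p$-phase setting. First, the $L^{\infty}$ hypothesis \eqref{cond : main assumption with infty}$_{1}$ gives $|u - u_{Q_{4\rho}^{\lambda}(z_0)}| \leq 2\|u\|_{L^{\infty}(\Omega_T)}$ uniformly, which trivially controls the three oscillation integrals. Second, \eqref{cond : p-phase condition}$_{1}$ yields $a(z) \leq K\lambda^{p-q}$ on $Q_{10\rho}(z_0) \supset Q_{4\rho}^{\lambda}(z_0)$, absorbing the $a(z)$-coefficient in the $q$-term. Third, the intrinsic relation \eqref{cond : p-phase condition}$_{2}$ at radius $\sigma = 4\rho$ gives $\miint_{Q_{4\rho}^{\lambda}(z_0)} H(z, |F|)\,dz < \lambda^{p}$, so the source term contributes at most $c\lambda^{p}$.

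After simplifying exponents (using, for instance, $p-q+q(2-p)/2 = p(2-q)/2$ and $p(2-p)/2 - (p-2) = (4-p^{2})/2$) and dividing the resulting estimate by $\lambda^{p-2}$, the inequality takes the form
$$
S(u, Q_{2\rho}^{\lambda}(z_0)) \leq c\Bigl( \|u\|_{L^{\infty}(\Omega_T)}^{p}\lambda^{(4-p^{2})/2}\rho^{-p} + K\|u\|_{L^{\infty}(\Omega_T)}^{q}\lambda^{(4-pq)/2}\rho^{-q} + \|u\|_{L^{\infty}(\Omega_T)}^{2}\lambda^{2-p}\rho^{-2} + \lambda^{2} \Bigr).
$$

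The remaining task, which I expect to be the main technical point, is to absorb the first three terms into $c\lambda^{2}$. Each of them has the structure $(\|u\|_{L^{\infty}(\Omega_T)}/(\lambda^{p/2}\rho))^{k}\lambda^{2}$ for $k \in \{p, q, 2\}$, so the problem reduces to bounding $\|u\|_{L^{\infty}(\Omega_T)}/(\lambda^{p/2}\rho)$ by a constant depending only on $\operatorname{data}_b$. For this I plan to combine the upper bound $\rho \leq c\lambda^{-\mu_{2}/(n+2)}$ coming from the decay estimate of Lemma~\ref{lem : decay estimate} with the lower bound on $\lambda$ arising from the stopping-time threshold $\Lambda > (\text{const})\Lambda_{0} \geq 1$ (recall $\lambda_{0}^{\mu_{2}} = \miint[H(z,|Du|)+H(z,|F|)+1]\,dz \geq 1$), and to absorb the factors of $\|u\|_{L^{\infty}(\Omega_T)}$ into the final constant, which is permitted by the definition of $\operatorname{data}_b$. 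This yields the desired inequality $S(u, Q_{2\rho}^{\lambda}(z_0)) \leq c\lambda^{2}$ with $c = c(\operatorname{data}_b)$.
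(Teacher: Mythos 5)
There is a genuine gap at the final absorption step, and it is fatal to the whole strategy. After the crude bound $|u-u_{Q_{4\rho}^{\lambda}(z_0)}|\leq 2\|u\|_{L^\infty(\Omega_T)}$ your three oscillation terms carry the factors $\rho^{-p}$, $\rho^{-q}$, $\rho^{-2}$, and you propose to control $\|u\|_{L^\infty(\Omega_T)}/(\lambda^{p/2}\rho)$ by a constant using Lemma \ref{lem : decay estimate}. But that lemma gives $\lambda\leq c\rho^{-(n+2)/\mu_2}$, i.e.\ an \emph{upper} bound $\rho\leq c\lambda^{-\mu_2/(n+2)}$, which bounds $\lambda^{p/2}\rho$ from \emph{above}, not from below. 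What you would need is a lower bound on the stopping radius $\rho$ in terms of $\lambda$, and no such bound exists: the radius $\varrho_w$ produced by the stopping-time argument in Section \ref{section 3} is only known to satisfy the exit-time identity \eqref{cond : integral of H in =varrho}, which again yields only $\rho\lesssim\lambda^{-\mu_2/(n+2)}$. The lower bound $\lambda\gtrsim 1$ (which does hold in the $p$-phase, since $K\lambda^p\geq\sup a\,\lambda^q$ gives $\lambda^p\geq\Lambda/(1+K)\geq\Lambda_0/(1+K)$) does not rescue the argument, because $\rho$ can still be arbitrarily small for fixed $\lambda$. It is also a warning sign that your argument never invokes the gap condition \eqref{cond : main assumption with infty}$_2$, which the paper's proof genuinely needs.

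The paper's proof avoids this by never paying the full price $\rho^{-p}$, $\rho^{-q}$, $\rho^{-2}$. The main parts of the oscillation integrals are estimated by the intrinsic parabolic Poincar\'e inequalities (Lemmas \ref{lem : p-intrinsic parabolic Poincare inequality of p-term in p-intrinsic cylinder with bounded solution} and \ref{lem : p-intrinsic parabolic Poincare inequality of q-term in p-intrinsic cylinder with bounded solution}) together with \eqref{cond : p-phase condition}$_2$, producing $c\lambda^p$ directly. The $L^\infty$ hypothesis is used only on the excess piece $[a]_\alpha\rho^\alpha\miint|u-u_{Q}|^q/(\lambda^{(p-2)/2}\rho)^q$, where one factors out only $|u-u_{Q}|^{q-p}/(\lambda^{(p-2)/2}\rho)^{q-p}\lesssim\|u\|_{L^\infty}^{q-p}\lambda^{(2-p)(q-p)/2}\rho^{-(q-p)}$; the small negative power $\rho^{-(q-p)}$ is then compensated by $\rho^\alpha$ precisely because $q-p\leq\alpha(p(n+2)-2n)/4$ combined with \eqref{eq : decay estimate of rho and lambda} and \eqref{calculate mu_2}. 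Finally, the quadratic term is bounded by $c\lambda^{p-1}S(u,Q_{\rho_2}^\lambda(z_0))^{1/2}$ on the larger cylinder, which is why the proof works on a sliding pair of radii $\rho_1<\rho_2$ and closes via Young's inequality and the iteration Lemma \ref{lem : a standard iteration lemma}; your single-pair application of Caccioppoli has no mechanism to reabsorb that term either.
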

\begin{proof}
    Let $2\rho\leq \rho_1<\rho_2\leq 4\rho$. By Lemma \ref{lem : Caccioppoli inequality}, there exists a constant $c=c(n,p,q,\nu,L)>1$ such that
    \begin{align*}
        &\lambda^{p-2}S(u,Q_{\rho_1}^\lambda (z_0)) \\
        &\qquad\leq \frac{c\rho_2^q}{(\rho_2-\rho_1)^q}\miint{Q_{\rho_2}^\lambda (z_0)}\left(\frac{\Big|u-u_{Q_{\rho_2}^\lambda (z_0)}\Big|^p}{\left(\lambda^{\frac{p-2}{2}}\rho_2\right)^p}+a(z)\frac{\Big|u-u_{Q_{\rho_2}^\lambda(z_0)}\Big|^q}{\left(\lambda^{\frac{p-2}{2}}\rho_2\right)^q}\right)\, dz \\
        &\qquad\quad+\frac{c\rho_2^2}{(\rho_2-\rho_1)^2}\miint{Q_{\rho_2}^\lambda (z_0)}\frac{\Big|u-u_{Q_{\rho_2}^\lambda(z_0)}\Big|^2}{\rho_2^2}\, dz+c\miint{Q_{\rho_2}^\lambda(z_0)} H(z,|F|)\, dz.
    \end{align*}
    By \eqref{cond : p-phase condition}$_2$ and Lemma \ref{lem : p-intrinsic parabolic Poincare inequality of p-term in p-intrinsic cylinder with bounded solution}, we obtain
    \begin{equation*}
    \miint{Q_{\rho_2}^\lambda (z_0)}\frac{\Big|u-u_{Q_{\rho_2}^\lambda (z_0)}\Big|^p}{\left(\lambda^{\frac{p-2}{2}}\rho_2\right)^p}\, dz \leq c\lambda^p
    \end{equation*}
    for some $c=c(\operatorname{data}_b)>1$. On the other hand, we have
    $$
    \begin{aligned}
        \miint{Q_{\rho_2}^\lambda (z_0)}a(z)\frac{\Big|u-u_{Q_{\rho_2}^\lambda(z_0)}\Big|^q}{\left(\lambda^{\frac{p-2}{2}}\rho_2\right)^q}\, dz &\leq \miint{Q_{\rho_2}^\lambda (z_0)} \inf_{w\in Q_{\rho_2}^\lambda(z_0)} a(w)\frac{\Big|u-u_{Q_{\rho_2}^\lambda(z_0)}\Big|^q}{\left(\lambda^{\frac{p-2}{2}}\rho_2\right)^q}\, dz\\
        &\qquad +[a]_\alpha \rho_2^\alpha\miint{Q_{\rho_2}^\lambda (z_0)}\frac{\Big|u-u_{Q_{\rho_2}^\lambda(z_0)}\Big|^q}{\left(\lambda^{\frac{p-2}{2}}\rho_2\right)^q}\, dz.
    \end{aligned}
    $$
    Using \eqref{cond : p-phase condition}$_2$ and Lemma \ref{lem : p-intrinsic parabolic Poincare inequality of q-term in p-intrinsic cylinder with bounded solution} gives
    $$
    \miint{Q_{\rho_2}^\lambda (z_0)} \inf_{w\in Q_{\rho_2}^\lambda(z_0)} a(w)\frac{\Big|u-u_{Q_{\rho_2}^\lambda(z_0)}\Big|^q}{\left(\lambda^{\frac{p-2}{2}}\rho_2\right)^q}\, dz\leq c\lambda^p
    $$
    for some $c=c(\operatorname{data}_b)>1$. Furthermore, since $u\in L^\infty(\Omega_T)$, it follows from \eqref{cond : main assumption with infty}, \eqref{eq : decay estimate of rho and lambda} and \eqref{calculate mu_2} that
    $$
    \begin{aligned}
        &\rho_2^\alpha\miint{Q_{\rho_2}^\lambda (z_0)}\frac{\Big|u-u_{Q_{\rho_2}^\lambda(z_0)}\Big|^q}{\left(\lambda^{\frac{p-2}{2}}\rho_2\right)^q}\, dz\\
        &\qquad\qquad = \rho_2^\alpha \miint{Q_{\rho_2}^\lambda (z_0)} \frac{\Big|u-u_{Q_{\rho_2}^\lambda(z_0)}\Big|^{q-p}}{\left(\lambda^{\frac{p-2}{2}}\rho_2\right)^{q-p}}\cdot \frac{\Big|u-u_{Q_{\rho_2}^\lambda(z_0)}\Big|^{p}}{\left(\lambda^{\frac{p-2}{2}}\rho_2\right)^{p}} \,dz\\
        &\qquad\qquad \leq c\|u\|_{L^\infty(\Omega_T)}^{q-p}\lambda^{\frac{(2-p)(q-p)}{2}}\rho_2^{\alpha-q+p}\miint{Q_{\rho_2}^\lambda (z_0)}\frac{\Big|u-u_{Q_{\rho_2}^\lambda(z_0)}\Big|^{p}}{\left(\lambda^{\frac{p-2}{2}}\rho_2\right)^{p}} \,dz\\
        &\qquad\qquad \leq c\rho_2^{\alpha-\frac{(q-p)[(2-p)(n+2)+2\mu_2]}{2\mu_2}}\miint{Q_{\rho_2}^\lambda (z_0)}\frac{\Big|u-u_{Q_{\rho_2}^\lambda(z_0)}\Big|^{p}}{\left(\lambda^{\frac{p-2}{2}}\rho_2\right)^{p}} \,dz\\
        &\qquad\qquad \leq c\rho_2^{\alpha-\frac{4(q-p)}{p(n+2)-2n}}\miint{Q_{\rho_2}^\lambda (z_0)}\frac{\Big|u-u_{Q_{\rho_2}^\lambda(z_0)}\Big|^{p}}{\left(\lambda^{\frac{p-2}{2}}\rho_2\right)^{p}} \,dz\leq c\lambda^p
    \end{aligned}
    $$
    for some $c=c(\operatorname{data}_b)>1$. Next, by applying the method in \cite[Lemma 3.6]{Wontae2024}, we obtain
    $$
    \begin{aligned}
        &\miint{Q_{\rho_2}^\lambda (z_0)}\frac{\Big|u-u_{Q_{\rho_2}^\lambda(z_0)}\Big|^2}{\rho_2^2}\, dz\leq c\lambda^{p-1}S(u,Q_{\rho_2}^\lambda(z_0))^{\frac{1}{2}}
    \end{aligned}
    $$
    for some $c=c(\operatorname{data}_b)>1$. Finally, by \eqref{cond : p-phase condition}$_2$, we have 
    $$
    c\miint{Q_{\rho_2}^\lambda(z_0)} H(z,|F|)\, dz\leq c\lambda^p.
    $$ 
    Combining the above inequalities yields
    $$
    S(u,Q_{\rho_1}^\lambda(z_0))\leq \frac{c\rho_2^q}{(\rho_2-\rho_1)^q}\lambda^2+\frac{c\rho_2^2}{(\rho_2-\rho_1)^2}\lambda S(u,Q_{\rho_2}^\lambda (z_0))^\frac{1}{2}
    $$
    for some $c=c(\operatorname{data}_b)>1$. By Young's inequality, we get
    $$
    S(u,Q_{\rho_1}^\lambda(z_0))\leq \frac{1}{2}S(u,Q_{\rho_2}^\lambda (z_0))+c\left(\frac{\rho_2^q}{(\rho_2-\rho_1)^q}+\frac{\rho_2^4}{(\rho_2-\rho_1)^4}\right)\lambda^2.
    $$
    Therefore, the conclusion follows from Lemma \ref{lem : a standard iteration lemma}.
\end{proof}
Next, we estimate the first term on the right-hand side in Lemma \ref{lem : Caccioppoli inequality} under the assumptions \eqref{cond : main assumption with infty} and \eqref{cond : p-phase condition}.
\begin{lemma}\label{lem : estimate of the first term in Lemma 2.1 in p-intrinsic cylinder with bounded solution}
    There exist constants $c=c(\operatorname{data}_b)>1$ and $\theta_1=\theta_1(n)\in (0,1)$ such that for any $\theta\in (\theta_1,1)$,
    $$
    \begin{aligned}
        &\miint{Q_{2\rho}^\lambda (z_0)}\left(\frac{\Big|u-u_{Q_{2\rho}^\lambda (z_0)}\Big|^p}{\left(2\lambda^{\frac{p-2}{2}}\rho\right)^p}+a(z)\frac{\Big|u-u_{Q_{2\rho}^\lambda (z_0)}\Big|^q}{\left(2\lambda^{\frac{p-2}{2}}\rho\right)^q}\right)\, dz\\
        &\qquad\leq c \lambda^{(1-\theta)p} \miint{Q_{2\rho}^\lambda (z_0)}\left(\frac{\Big|u-u_{Q_{2\rho}^\lambda (z_0)}\Big|^{\theta p}}{\left(2\lambda^{\frac{p-2}{2}}\rho\right)^{\theta p}}+|Du|^{\theta p}\right)\, dz\\
        &\qquad\quad + c \lambda^{(1-\theta)p} \miint{Q_{2\rho}^\lambda (z_0)}\inf_{w\in Q_{2\rho}^\lambda (z_0)} a(w)^\theta \left(\frac{\Big|u-u_{Q_{2\rho}^\lambda (z_0)}\Big|^{\theta q}}{\left(2\lambda^{\frac{p-2}{2}}\rho\right)^{\theta q}}+|Du|^{\theta q}\right)\, dz.
    \end{aligned}
    $$
\end{lemma}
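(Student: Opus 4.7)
The plan is to apply the Gagliardo--Nirenberg inequality (Lemma~\ref{lem : Gagliardo-Nirenberg inequality}) on each time slice and use the sup-estimate $S(u, Q^\lambda_{2\rho}(z_0)) \leq c\lambda^2$ from Lemma~\ref{lem : estimate of S in p-intrinsic cylinder of bounded solution} to convert the $L^2$-factor into $c\lambda^{(1-\theta)p}$. For brevity write $r = 2\lambda^{(p-2)/2}\rho$, $u_Q = u_{Q^\lambda_{2\rho}(z_0)}$, and $\underline{a} := \inf_{w \in Q^\lambda_{2\rho}(z_0)} a(w)$. First, apply Lemma~\ref{lem : Gagliardo-Nirenberg inequality} to $\psi = u(\cdot,t) - u_Q$ on $B^\lambda_{2\rho}(x_0)$ with parameters $p_1 = p$, $p_2 = \theta p$, $p_3 = 2$, $\vartheta = \theta$; the admissibility condition reduces to $\theta \geq n/(n+2)$, so set $\theta_1 = n/(n+2)$. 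Integrating in $t$ and pulling out the $S$-bound yields exactly the first right-hand side term; call this estimate $(\star)$.

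For the $a(z)$-weighted $q$-piece, split $a(z) = \underline{a} + (a(z) - \underline{a})$. The $\underline{a}$ portion receives the analogous Gagliardo--Nirenberg treatment with $p_1 = q$, $p_2 = \theta q$, $p_3 = 2$, $\vartheta = \theta$, producing the prefactor $\underline{a} \cdot \lambda^{(1-\theta)q}$. The $p$-phase bound $\underline{a} \leq \sup_{Q^\lambda_{2\rho}(z_0)} a \leq K\lambda^{p-q}$ from \eqref{cond : p-phase condition}$_1$ then lets me split $\underline{a} = \underline{a}^\theta \cdot \underline{a}^{1-\theta} \leq \underline{a}^\theta (K\lambda^{p-q})^{1-\theta}$, which converts the prefactor to $K^{1-\theta} \underline{a}^\theta \lambda^{(1-\theta)p}$ and supplies the second right-hand side term.

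The oscillation contribution $(a(z) - \underline{a}) |u-u_Q|^q/r^q$ is bounded by $[a]_\alpha d^\alpha |u - u_Q|^q/r^q$ via the H\"older continuity of $a$, where $d$ is the parabolic H\"older diameter of $Q^\lambda_{2\rho}(z_0)$. The bounded solution assumption \eqref{cond : main assumption with infty} supplies $|u - u_Q| \leq 2\|u\|_{L^\infty(\Omega_T)}$, so writing $|u-u_Q|^q = |u-u_Q|^p|u-u_Q|^{q-p}$ reduces the task to showing the prefactor $[a]_\alpha d^\alpha r^{p-q}$ is bounded by a $\operatorname{data}_b$-constant. Substituting $d^\alpha \leq c\rho^\alpha \max(\lambda^{(p-2)\alpha/2},1)$, $r^{p-q} = c\lambda^{(p-2)(p-q)/2}\rho^{p-q}$, and $\lambda \leq c\rho^{-(n+2)/\mu_2}$ from Lemma~\ref{lem : decay estimate}, the algebraic identity $1 + (n+2)(2-p)/(2\mu_2) = 2/\mu_2$ collapses the residual exponent of $\rho$ (in the regime $\lambda \geq 1$) to $\alpha - 2(q-p)/\mu_2$, which is nonnegative exactly by the gap condition \eqref{cond : main assumption with infty}$_2$. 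The case $\lambda < 1$ is handled by the lower bound $\lambda \geq (1+\|a\|_{L^\infty(\Omega_T)})^{-1/p}$ inherited from the stopping-time construction. The resulting bound on the oscillation piece is then absorbed into the first right-hand side term via $(\star)$.

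The main obstacle is precisely this final exponent analysis for the oscillation piece: the absorption succeeds only because the gap bound $q - p \leq \alpha\mu_2/2$ is tight enough to make the residual $\rho$-exponent nonnegative. All other ingredients---time-slice Gagliardo--Nirenberg, the $S$-bound, and the $p$-phase conversion of $\underline{a}$---are routine parabolic interpolation.
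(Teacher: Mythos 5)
Your proposal is correct and follows essentially the same route as the paper: the same three-way split of $a(z)$ into $\inf a$ plus the Hölder oscillation, Gagliardo--Nirenberg on time slices combined with the sup-bound $S(u,Q_{2\rho}^\lambda(z_0))\leq c\lambda^2$ for the first two pieces, and for the oscillation piece the factorization $|u-u_Q|^q=|u-u_Q|^{q-p}|u-u_Q|^p$ together with $u\in L^\infty$, the decay estimate of Lemma \ref{lem : decay estimate}, and the identity \eqref{calculate mu_2} to reduce the residual $\rho$-exponent to $\alpha-\frac{4(q-p)}{p(n+2)-2n}\geq 0$. The only cosmetic differences are that you spell out the Gagliardo--Nirenberg parameters where the paper defers to \cite[Lemma 3.7]{Wontae2024}, and your worry about $\lambda<1$ is moot since the stopping-time construction forces $\lambda\geq\lambda_0\geq 1$.
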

\begin{proof}
    Observe that
    \begin{align*}
        &\miint{Q_{2\rho}^\lambda (z_0)}\left(\frac{\Big|u-u_{Q_{2\rho}^\lambda (z_0)}\Big|^p}{\left(2\lambda^{\frac{p-2}{2}}\rho\right)^p}+a(z)\frac{\Big|u-u_{Q_{2\rho}^\lambda (z_0)}\Big|^q}{\left(2\lambda^{\frac{p-2}{2}}\rho\right)^q}\right)\, dz \\
        &\qquad\leq c \miint{Q_{2\rho}^\lambda (z_0)} \frac{\Big|u-u_{Q_{2\rho}^\lambda (z_0)}\Big|^p}{\left(2\lambda^{\frac{p-2}{2}}\rho\right)^p}\, dz + \miint{Q_{2\rho}^\lambda (z_0)} \inf_{w\in Q_{2\rho}^\lambda (z_0)} a(w) \frac{\Big|u-u_{Q_{2\rho}^\lambda (z_0)}\Big|^{q}}{\left(2\lambda^{\frac{p-2}{2}}\rho\right)^{ q}}\, dz \\
        &\qquad\qquad + [a]_\alpha (2\rho)^\alpha \miint{Q_{2\rho}^\lambda (z_0)} \frac{\Big|u-u_{Q_{2\rho}^\lambda (z_0)}\Big|^{q}}{\left(2\lambda^{\frac{p-2}{2}}\rho\right)^{q}}\, dz.
    \end{align*}
    To estimate the first and second terms on the right-hand side, we note from Lemma \ref{lem : Gagliardo-Nirenberg inequality} as in \cite[Lemma 3.7]{Wontae2024} and Lemma \ref{lem : estimate of S in p-intrinsic cylinder of bounded solution} that for $\theta \in \left(\frac{n}{n+2},1\right)$,
    $$
    \begin{aligned}
        &\miint{Q_{2\rho}^\lambda (z_0)} \frac{\Big|u-u_{Q_{2\rho}^\lambda (z_0)}\Big|^p}{\left(2\lambda^{\frac{p-2}{2}}\rho\right)^p}\, dz + \miint{Q_{2\rho}^\lambda (z_0)} \inf_{w\in Q_{2\rho}^\lambda (z_0)} a(w) \frac{\Big|u-u_{Q_{2\rho}^\lambda (z_0)}\Big|^{q}}{\left(2\lambda^{\frac{p-2}{2}}\rho\right)^{ q}}\, dz\\
        &\qquad\leq c \lambda^{(1-\theta)p} \miint{Q_{2\rho}^\lambda (z_0)}\left(\frac{\Big|u-u_{Q_{2\rho}^\lambda (z_0)}\Big|^{\theta p}}{\left(2\lambda^{\frac{p-2}{2}}\rho\right)^{\theta p}}+|Du|^{\theta p}\right)\, dz\\
        &\qquad\quad + c \lambda^{(1-\theta)p} \miint{Q_{2\rho}^\lambda (z_0)}\inf_{w\in Q_{2\rho}^\lambda (z_0)} a(w)^\theta \left(\frac{\Big|u-u_{Q_{2\rho}^\lambda (z_0)}\Big|^{\theta q}}{\left(2\lambda^{\frac{p-2}{2}}\rho\right)^{\theta q}}+|Du|^{\theta q}\right)\, dz
    \end{aligned}
    $$
    for some $c=c(\operatorname{data}_b)>1$.
    Then it follows from \eqref{cond : main assumption with infty}, \eqref{eq : decay estimate of rho and lambda} and \eqref{calculate mu_2} that
    $$
    \begin{aligned}
        &(2\rho)^\alpha \miint{Q_{2\rho}^\lambda (z_0)} \frac{\Big|u-u_{Q_{2\rho}^\lambda (z_0)}\Big|^{q}}{\left(2\lambda^{\frac{p-2}{2}}\rho\right)^{q}}\, dz\\
        &\qquad=(2\rho)^\alpha \miint{Q_{2\rho}^\lambda (z_0)} \frac{\Big|u-u_{Q_{2\rho}^\lambda (z_0)}\Big|^{q-p}}{\left(2\lambda^{\frac{p-2}{2}}\rho\right)^{q-p}}\frac{\Big|u-u_{Q_{2\rho}^\lambda (z_0)}\Big|^{p}}{\left(2\lambda^{\frac{p-2}{2}}\rho\right)^{p}}\, dz\\
        &\qquad\leq c\lambda^{\frac{(2-p)(q-p)}{2}}\rho^{\alpha-(q-p)}\miint{Q_{2\rho}^\lambda (z_0)}\frac{\Big|u-u_{Q_{2\rho}^\lambda (z_0)}\Big|^{p}}{\left(2\lambda^{\frac{p-2}{2}}\rho\right)^{p}}\, dz\\
        &\qquad\leq c\rho^{\alpha - \frac{(q-p)[(2-p)(n+2)+2\mu_2]}{2\mu_2}}\miint{Q_{2\rho}^\lambda (z_0)}\frac{\Big|u-u_{Q_{2\rho}^\lambda (z_0)}\Big|^{p}}{\left(2\lambda^{\frac{p-2}{2}}\rho\right)^{p}}\, dz\\
        &\qquad\leq c\rho^{\alpha - \frac{4(q-p)}{p(n+2)-2n}}\miint{Q_{2\rho}^\lambda (z_0)}\frac{\Big|u-u_{Q_{2\rho}^\lambda (z_0)}\Big|^{p}}{\left(2\lambda^{\frac{p-2}{2}}\rho\right)^{p}}\, dz\\
        &\qquad\leq c \lambda^{(1-\theta)p} \miint{Q_{2\rho}^\lambda (z_0)}\left(\frac{\Big|u-u_{Q_{2\rho}^\lambda (z_0)}\Big|^{\theta p}}{\left(2\lambda^{\frac{p-2}{2}}\rho\right)^{\theta p}}+|Du|^{\theta p}\right)\, dz\\
        &\qquad\quad + c \lambda^{(1-\theta)p} \miint{Q_{2\rho}^\lambda (z_0)}\inf_{w\in Q_{2\rho}^\lambda (z_0)} a(w)^\theta \left(\frac{\Big|u-u_{Q_{2\rho}^\lambda (z_0)}\Big|^{\theta q}}{\left(2\lambda^{\frac{p-2}{2}}\rho\right)^{\theta q}}+|Du|^{\theta q}\right)\, dz
    \end{aligned}
    $$
    for some $c=c(\operatorname{data}_b)>1$. Hence, we conclude that for any $\theta \in \left(\frac{n}{n+2},1\right)$,
    $$
    \begin{aligned}
        &\miint{Q_{2\rho}^\lambda (z_0)}\left(\frac{\Big|u-u_{Q_{2\rho}^\lambda (z_0)}\Big|^p}{\left(2\lambda^{\frac{p-2}{2}}\rho\right)^p}+a(z)\frac{\Big|u-u_{Q_{2\rho}^\lambda (z_0)}\Big|^q}{\left(2\lambda^{\frac{p-2}{2}}\rho\right)^q}\right)\, dz\\
        &\qquad \leq c \lambda^{(1-\theta)p} \miint{Q_{2\rho}^\lambda (z_0)}\left(\frac{\Big|u-u_{Q_{2\rho}^\lambda (z_0)}\Big|^{\theta p}}{\left(2\lambda^{\frac{p-2}{2}}\rho\right)^{\theta p}}+|Du|^{\theta p}\right)\, dz\\
        &\qquad\quad + c \lambda^{(1-\theta)p} \miint{Q_{2\rho}^\lambda (z_0)}\inf_{w\in Q_{2\rho}^\lambda (z_0)} a(w)^\theta \left(\frac{\Big|u-u_{Q_{2\rho}^\lambda (z_0)}\Big|^{\theta q}}{\left(2\lambda^{\frac{p-2}{2}}\rho\right)^{\theta q}}+|Du|^{\theta q}\right)\, dz
    \end{aligned}
    $$
    for some $c=c(\operatorname{data}_b)>1$.
\end{proof}
Now, we prove the reverse H\"{o}lder inequality in the $p$-intrinsic case.
\begin{lemma}\label{lem : the reverse Holder inequality in the p-intrinsic case of bounded solution}
    There exist constants $c=c(\operatorname{data}_b)>1$ and $\theta_0=\theta_0(n,p,q)\in (0,1)$ such that for any $\theta\in (\theta_0,1)$, 
    $$
    \miint{Q_\rho^\lambda (z_0)} H(z,|Du|)\, dz\leq c \left(\miint{Q_{2\rho}^\lambda (z_0)} [H(z,|Du|)]^\theta \, dz\right)^\frac{1}{\theta}+c\miint{Q_{2\rho}^\lambda (z_0)} H(z,|F|) \, dz.
    $$
\end{lemma}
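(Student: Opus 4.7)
The plan is to begin with the Caccioppoli inequality (Lemma \ref{lem : Caccioppoli inequality}) on the $p$-intrinsic pair $Q_\rho^\lambda(z_0)\subset Q_{2\rho}^\lambda(z_0)$, with the choice $R=2\lambda^{(p-2)/2}\rho$, $r=\lambda^{(p-2)/2}\rho$, $\ell=4\rho^2$, $\tau=\rho^2$. This yields an upper bound for $\miint{Q_\rho^\lambda(z_0)} H(z,|Du|)\,dz$ in terms of three oscillation integrals on $Q_{2\rho}^\lambda(z_0)$: the $L^p$-type quantity $|u-u_{Q_{2\rho}^\lambda(z_0)}|^p/(\lambda^{(p-2)/2}\rho)^p$, its $a$-weighted $L^q$-counterpart, and the $L^2$ time-oscillation $|u-u_{Q_{2\rho}^\lambda(z_0)}|^2/\rho^2$, plus the source-term contribution $\miint{Q_{2\rho}^\lambda(z_0)} H(z,|F|)\,dz$.

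Apply Lemma \ref{lem : estimate of the first term in Lemma 2.1 in p-intrinsic cylinder with bounded solution} to compress the first two integrands into $c\lambda^{(1-\theta)p}$ times sub-$p$ and sub-$q$ Poincaré quantities plus gradient pieces bounded pointwise by $H(z,|Du|)^\theta$. Lemmas \ref{lem : p-intrinsic parabolic Poincare inequality of p-term in p-intrinsic cylinder with bounded solution} and \ref{lem : p-intrinsic parabolic Poincare inequality of q-term in p-intrinsic cylinder with bounded solution} then bound the $u$-oscillation sub-integrals by $c\miint{Q_{2\rho}^\lambda(z_0)} H(z,|Du|)^\theta\,dz + c\lambda^{(2-p+\alpha_0)\theta p}\bigl(\miint{Q_{2\rho}^\lambda(z_0)}(|Du|+|F|)^{\theta p}\,dz\bigr)^{p-1-\alpha_0}$ with $\alpha_0:=\alpha(p(n+2)-2n)/8$. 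Jensen's inequality, combined with $(|Du|+|F|)^p\leq c(H(z,|Du|)+H(z,|F|))$ and the $p$-phase stopping bound \eqref{cond : p-phase condition}$_2$, controls this cross term by $c\lambda^p$. Lemma \ref{lem : estimate of S in p-intrinsic cylinder of bounded solution} provides the analogous bound $\miint{Q_{2\rho}^\lambda(z_0)}|u-u|^2/\rho^2\,dz\leq\lambda^{p-2}S(u,Q_{2\rho}^\lambda(z_0))\leq c\lambda^p$ for the $L^2$ time-oscillation. The resulting estimate reads
$$
\miint{Q_\rho^\lambda(z_0)} H(z,|Du|)\,dz \leq c\lambda^{(1-\theta)p}\miint{Q_{2\rho}^\lambda(z_0)} H(z,|Du|)^\theta\,dz + c\lambda^p + c\miint{Q_{2\rho}^\lambda(z_0)} H(z,|F|)\,dz.
$$

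To close, apply Young's inequality with exponents $1/(1-\theta)$ and $1/\theta$ to split $\lambda^{(1-\theta)p}\miint{Q_{2\rho}^\lambda(z_0)} H(z,|Du|)^\theta\,dz\leq\varepsilon\lambda^p+c_\varepsilon\bigl(\miint{Q_{2\rho}^\lambda(z_0)} H(z,|Du|)^\theta\,dz\bigr)^{1/\theta}$. Invoking the stopping identity $\lambda^p=\miint{Q_\rho^\lambda(z_0)}[H(z,|Du|)+H(z,|F|)]\,dz$ rewrites the combined $\lambda^p$ contributions in terms of $\miint{Q_\rho^\lambda(z_0)} H(z,|Du|)\,dz$ (absorbable into the left-hand side for small $\varepsilon$) and $\miint{Q_\rho^\lambda(z_0)} H(z,|F|)\,dz$. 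The main obstacle is the $c\lambda^p$ arising from the $L^2$ time-oscillation, which carries a fixed (not $\varepsilon$-small) constant and resists direct absorption. The remedy is to run the whole argument over a family of concentric radii $\rho\leq r_1<r_2\leq 2\rho$ while retaining the term $\sup_t\dashint_{B_{r_1\lambda^{(p-2)/2}}(x_0)}|u-u|^2/r_1^2\,dx$ on the Caccioppoli left-hand side; the $|u-u|^2/(r_2^2-r_1^2)$ piece on the right is then dominated, up to a $(r_2/(r_2-r_1))^2$ factor, by the same sup-quantity at scale $r_2$, and the standard iteration Lemma \ref{lem : a standard iteration lemma} closes the loop. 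The requirement $\theta\in(\theta_0,1)$ with $\theta_0=\theta_0(n,p,q)\in(0,1)$ reflects the exponent constraints $\theta p>q-1$ and $\theta>n/(n+2)$ built into the Poincaré and reduction lemmas invoked above.
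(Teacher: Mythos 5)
Your opening steps match the paper: Caccioppoli, then Lemma \ref{lem : estimate of the first term in Lemma 2.1 in p-intrinsic cylinder with bounded solution} followed by Lemmas \ref{lem : p-intrinsic parabolic Poincare inequality of p-term in p-intrinsic cylinder with bounded solution} and \ref{lem : p-intrinsic parabolic Poincare inequality of q-term in p-intrinsic cylinder with bounded solution}. The gap is in how you dispose of the remaining terms. You bound both the cross term $\lambda^{(2-p+\alpha_0)\theta p}\bigl(\miint{}(|Du|+|F|)^{\theta p}\,dz\bigr)^{p-1-\alpha_0}$ and the $L^2$ time-oscillation term by $c\lambda^p$ with a \emph{fixed} constant $c=c(\operatorname{data}_b)>1$. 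After invoking the stopping identity $\lambda^p=\miint{Q_\rho^\lambda(z_0)}[H(z,|Du|)+H(z,|F|)]\,dz$, this places $c\miint{Q_\rho^\lambda(z_0)}H(z,|Du|)\,dz$ with $c>1$ on the right-hand side, which cannot be absorbed into the left. You correctly identify this obstacle, but the proposed remedy does not repair it: in Lemma \ref{lem : a standard iteration lemma} the quantity $c\lambda^p$ enters as the constant term $B$, and the lemma returns $g(\rho)\leq c(A/(\tau-\rho)^\gamma+B)$, so the offending $c\lambda^p$ survives with an even larger constant. Nor can you realize $c\lambda^p$ as a small multiple of the same functional at a larger radius, since for $\sigma>\rho$ the stopping condition \eqref{cond : p-phase condition}$_2$ gives $\miint{Q_\sigma^\lambda(z_0)}[H(z,|Du|)+H(z,|F|)]\,dz<\lambda^p$, i.e.\ the inequality points the wrong way.

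The paper's proof avoids this precisely by \emph{not} collapsing these two terms to $c\lambda^p$. For the $L^2$ term it applies the Gagliardo--Nirenberg inequality (Lemma \ref{lem : Gagliardo-Nirenberg inequality} with $p_1=p_3=2$, $p_2=\theta p$, $\vartheta=\tfrac12$, which is where the additional restriction $\theta>\frac{2n}{(n+2)p}$ in $\theta_0$ originates) so that only $S(u,Q_{2\rho}^\lambda(z_0))^{1/2}\leq c\lambda$ is used, leaving a factor $\bigl(\miint{}(\cdots)^{\theta p}\,dz\bigr)^{1/(\theta p)}$; for the cross term it retains the power $\beta=\min\{p-1-\tfrac{\alpha(p(n+2)-2n)}{8},\tfrac12\}$ of the integral. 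Young's inequality applied to expressions of the form $\lambda^{p-\beta}(\cdots)^{\beta/(\theta p)}$ then yields $\tfrac12\lambda^p$ — with coefficient at most $\tfrac12$ — plus $c\bigl(\miint{}[H(z,|Du|)]^\theta\,dz\bigr)^{1/\theta}$ and the $F$-term, and only then is the stopping identity used to absorb $\tfrac12\miint{Q_\rho^\lambda(z_0)}H(z,|Du|)\,dz$ into the left-hand side. Retaining those fractional powers of the integrals is the essential mechanism your argument discards.
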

\begin{proof}
    It follows from Lemma \ref{lem : Caccioppoli inequality} that 
    \begin{align}
        \miint{Q_\rho^\lambda (z_0)} H(z,|Du|)\, dz&\leq c \miint{Q_{2\rho}^\lambda (z_0)} \left(\frac{\Big|u-u_{Q_{2\rho}^\lambda (z_0)}\Big|^p}{\left(2\lambda^{\frac{p-2}{2}}\rho\right)^p}+a(z)\frac{\Big|u-u_{Q_{2\rho}^\lambda (z_0)}\Big|^q}{\left(2\lambda^{\frac{p-2}{2}}\rho\right)^q}\right)\, dz \nonumber\\\label{eq : estimation in H in p-intrinsic cylinder of bounded solution using Caccioppoli inequality}
        &\; +c\lambda^{p-2} \miint{Q_{2\rho}^\lambda (z_0)} \frac{\Big|u-u_{Q_{2\rho}^\lambda (z_0)}\Big|^2}{\left(2\lambda^{\frac{p-2}{2}}\rho\right)^2}\, dz+c\miint{Q_{2\rho}^\lambda (z_0)} H(z,|F|)\, dz,
    \end{align}
    where $c=c(n,p,q,\nu,L)>1$. Let $\theta_2\coloneq\max\left\{\theta_1,\frac{q-1}{p}\right\}$, where $\theta_1$ is defined in Lemma \ref{lem : estimate of the first term in Lemma 2.1 in p-intrinsic cylinder with bounded solution}. For $\theta \in (\theta_2,1)$, using Lemmas \ref{lem : estimate of the first term in Lemma 2.1 in p-intrinsic cylinder with bounded solution}, \ref{lem : p-intrinsic parabolic Poincare inequality of p-term in p-intrinsic cylinder with bounded solution}, \ref{lem : p-intrinsic parabolic Poincare inequality of q-term in p-intrinsic cylinder with bounded solution} and Young's inequality yields
    $$
    \begin{aligned}
        &\miint{Q_{2\rho}^\lambda (z_0)} \left(\frac{\Big|u-u_{Q_{2\rho}^\lambda (z_0)}\Big|^p}{\left(2\lambda^{\frac{p-2}{2}}\rho\right)^p}+a(z)\frac{\Big|u-u_{Q_{2\rho}^\lambda (z_0)}\Big|^q}{\left(2\lambda^{\frac{p-2}{2}}\rho\right)^q}\right)\, dz\\
        &\qquad \leq c \lambda^{(1-\theta)p} \miint{Q_{2\rho}^\lambda (z_0)} [H(z,|Du|)]^\theta \, dz\\
        &\qquad \quad + c \lambda^{\left(1-p+\frac{\alpha(p(n+2)-2n)}{8}\right)\theta p+p} \left(\miint{Q_{2\rho}^\lambda (z_0)} (|Du|+|F|)^{\theta p}\, dz\right)^{p-1-\frac{\alpha(p(n+2)-2n)}{8}}
    \end{aligned}
    $$
    for some $c=c(\operatorname{data}_b)>1$. Recall that $p-1-\frac{\alpha(p(n+2)-2n)}{8}>0$.
    Putting
    $$
    \beta \coloneq \min\left\{p-1-\frac{\alpha(p(n+2)-2n)}{8},\frac{1}{2}\right\},
    $$
    we have
    \begin{align}
        &\miint{Q_{2\rho}^\lambda (z_0)} \left(\frac{\Big|u-u_{Q_{2\rho}^\lambda (z_0)}\Big|^p}{\left(2\lambda^{\frac{p-2}{2}}\rho\right)^p}+a(z)\frac{\Big|u-u_{Q_{2\rho}^\lambda (z_0)}\Big|^q}{\left(2\lambda^{\frac{p-2}{2}}\rho\right)^q}\right)\, dz \nonumber\\
        &\qquad \leq c \lambda^{(1-\beta\theta)p} \left(\miint{Q_{2\rho}^\lambda (z_0)} [H(z,|Du|)]^\theta \, dz\right)^\beta\nonumber\\\label{eq : estimation of p,q term in H in p-intrinsic cylinder of bounded solution using Caccioppoli inequality}
        &\qquad \quad +c\lambda^{(1-\beta\theta)p}\left(\miint{Q_{2\rho}^\lambda (z_0)} H(z,|F|)\, dz\right)^{\beta\theta}.
    \end{align}
    On the other hand, we note that 
    $$
    \begin{aligned}
    -\frac{n}{2}\leq \frac{1}{2}\left(1-\frac{n}{\theta p}\right)-\left(1-\frac{1}{2}\right)\frac{n}{2}\quad \iff \quad \frac{2n}{(n+2)p}\leq \theta.
    \end{aligned}
    $$
    Since $\frac{2n}{n+2}< p \leq 2$, the assumption of Lemma \ref{lem : Gagliardo-Nirenberg inequality} with $p_1=2$, $p_2=\theta p$, $p_3=2$ and $\vartheta=\frac{1}{2}$ is satisfied. Hence we get from Lemmas \ref{lem : Gagliardo-Nirenberg inequality} and \ref{lem : estimate of S in p-intrinsic cylinder of bounded solution} that for $\theta\in \left(\frac{2n}{(n+2)p},1\right)$,
    $$
    \begin{aligned}
    &\miint{Q_{2\rho}^\lambda (z_0)} \frac{\Big|u-u_{Q_{2\rho}^\lambda (z_0)}\Big|^2}{\left(2\lambda^{\frac{p-2}{2}}\rho\right)^2}\, dz\\
    &\qquad \leq c\dashint_{I_{2\rho}(t_0)}\left(\dashint_{B_{2\rho}^\lambda(x_0)}\left(\frac{\Big|u-u_{Q_{2\rho}^\lambda (z_0)}\Big|^{\theta p}}{\left(2\lambda^{\frac{p-2}{2}}\rho\right)^{\theta p}}+|Du|^{\theta p}\right)\, dx\right)^\frac{1}{\theta p}\, dt\\
    &\qquad\quad \times (S(u,Q_{2\rho}^\lambda (z_0)))^\frac{1}{2}\\
    &\qquad \leq c \lambda \left(\miint{Q_{2\rho}^\lambda (z_0)} \left(\frac{\Big|u-u_{Q_{2\rho}^\lambda (z_0)}\Big|^{\theta p}}{\left(2\lambda^{\frac{p-2}{2}}\rho\right)^{\theta p}}+|Du|^{\theta p}\right)\,dz\right)^\frac{1}{\theta p}
    \end{aligned}
    $$
    for some $c=c(\operatorname{data}_b)>1$. By \eqref{cond : p-phase condition}$_2$ and Lemma \ref{lem : p-intrinsic parabolic Poincare inequality of p-term in p-intrinsic cylinder with bounded solution}, we have
    \begin{align} \label{eq : estimation of 2 term in H in p-intrinsic cylinder of bounded solution using Caccioppoli inequality}
        \lambda^{p-2} \miint{Q_{2\rho}^\lambda (z_0)} \frac{\Big|u-u_{Q_{2\rho}^\lambda (z_0)}\Big|^2}{\left(2\lambda^{\frac{p-2}{2}}\rho\right)^2}\, dz &\leq c\lambda^{p-\beta}\left(\miint{Q_{2\rho}^\lambda (z_0)} [H(z,|Du|)]^\theta\, dz\right)^{\frac{\beta}{\theta p}}\nonumber\\
        &\quad+ c\lambda^{p-\beta}\left(\miint{Q_{2\rho}^\lambda (z_0)} H(z,|F|)\, dz\right)^{\frac{\beta}{p}}.
    \end{align}
    Combining \eqref{eq : estimation in H in p-intrinsic cylinder of bounded solution using Caccioppoli inequality}, \eqref{eq : estimation of p,q term in H in p-intrinsic cylinder of bounded solution using Caccioppoli inequality} and \eqref{eq : estimation of 2 term in H in p-intrinsic cylinder of bounded solution using Caccioppoli inequality} implies that for $\theta\in (\theta_0, 1)$,
    $$
    \begin{aligned}
        \miint{Q_\rho^\lambda (z_0)} H(z,|Du|)\, dz&\leq c\lambda^{p-\beta}\left(\miint{Q_{2\rho}^\lambda (z_0)} [H(z,|Du|)]^\theta\, dz\right)^{\frac{\beta}{\theta p}}\\
        &\quad+ c\lambda^{p-\beta}\left(\miint{Q_{2\rho}^\lambda (z_0)} H(z,|F|)\, dz\right)^{\frac{\beta}{p}},
    \end{aligned}
    $$
    where $\theta_0=\max\left\{\theta_2,\frac{2n}{(n+2)p}\right\}$ and $c=c(\operatorname{data}_b)>1$. It follows from Young's inequality that
    $$
    \begin{aligned}
        &\miint{Q_\rho^\lambda (z_0)} H(z,|Du|)\, dz\\
        &\qquad\leq \frac{1}{2}\lambda^p+c\left(\miint{Q_{2\rho}^\lambda (z_0)} [H(z,|Du|)]^\theta\, dz\right)^{\frac{1}{\theta}}+c\miint{Q_{2\rho}^\lambda (z_0)} H(z,|F|)\, dz.
    \end{aligned}
    $$
    Thus, we conclude from \eqref{cond : p-phase condition}$_3$ that
    $$
    \miint{Q_\rho^\lambda (z_0)} H(z,|Du|)\, dz\leq c\left(\miint{Q_{2\rho}^\lambda (z_0)} [H(z,|Du|)]^\theta\, dz\right)^{\frac{1}{\theta}}+c\miint{Q_{2\rho}^\lambda (z_0)} H(z,|F|)\, dz.
    $$
\end{proof}
\subsubsection{Assumption \eqref{cond : main assumption with s}}
From now on, we assume \eqref{cond : main assumption with s} instead of \eqref{cond : main assumption with infty}. First, we establish a $p$-intrinsic parabolic Poincar\'{e} inequality.
    \begin{lemma}\label{lem : p-intrinsic parabolic Poincare inequality of p-term in p-intrinsic cylinder with s}
    For $\sigma\in[2\rho,4\rho]$ and $\theta \in \left(\frac{q-1}{p},1\right]$, there exists a constant $c=c(\operatorname{data}_s)$ $>1$ such that
    \begin{align*}
        &\miint{Q_\sigma^\lambda(z_0)}\frac{\Big|u-u_{Q_\sigma^\lambda(z_0)}\Big|^{\theta p}}{(\lambda^{\frac{p-2}{2}}\sigma)^{\theta p}}\, dz\\
        &\qquad\qquad\leq c\miint{Q_\sigma^\lambda(z_0)} [H(z,|Du|)]^\theta\, dz\\
        &\qquad\qquad\quad +c\lambda^{\left(2-p+\frac{\alpha \mu_s}{n+s}\right)\theta p}\left(\miint{Q_\sigma^\lambda(z_0)} (|Du|+|F|)^{\theta p}\, dz\right)^{p-1-\frac{\alpha\mu_s}{n+s}}.
    \end{align*}
\end{lemma}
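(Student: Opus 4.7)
The plan is to mirror the proof of Lemma \ref{lem : p-intrinsic parabolic Poincare inequality of p-term in p-intrinsic cylinder with bounded solution}, replacing the exponent $\alpha(p(n+2)-2n)/8$ that appeared there from \eqref{cond : main assumption with infty} by the exponent $\alpha\mu_s/(n+s)$ dictated by \eqref{cond : main assumption with s}. Within the $p$-intrinsic setup, only the identities \eqref{cond : p-phase condition} are used directly; the interpolation assumption \eqref{cond : main assumption with s} enters only to guarantee the strict positivity of $p-1-\alpha\mu_s/(n+s)$ needed for the H\"older splitting below.

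First I will apply Lemma \ref{lem : semi-Parabolic Poincare inequality} with $m=p$ on $Q_\sigma^\lambda(z_0)$ and then invoke Lemma \ref{lem : last term estimate in Lemma lem : semi-Parabolic Poincare inequality whenever p-intrinsic cylinder} to rewrite the source-type average. Using $|Du|^{\theta p}\leq [H(z,|Du|)]^\theta$, this produces the good term $c\miint{Q_\sigma^\lambda(z_0)}[H(z,|Du|)]^\theta\,dz$ together with two residual contributions
\[
\mathrm{J}_1 := \left(\lambda^{2-p}\miint{Q_\sigma^\lambda(z_0)}(|Du|+|F|)^{p-1}\,dz\right)^{\theta p},\qquad
\mathrm{J}_2 := \left(\lambda^{1-p+p/q}\miint{Q_\sigma^\lambda(z_0)}a(z)^{(q-1)/q}(|Du|+|F|)^{q-1}\,dz\right)^{\theta p}.
\]

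For $\mathrm{J}_1$, H\"older's inequality (valid since $\theta>(q-1)/p\geq (p-1)/p$) gives $\mathrm{J}_1\leq \lambda^{(2-p)\theta p}\bigl(\miint{Q_\sigma^\lambda(z_0)}(|Du|+|F|)^{\theta p}\,dz\bigr)^{p-1}$. I then split the $(p-1)$-power as $\alpha\mu_s/(n+s)+(p-1-\alpha\mu_s/(n+s))$ and absorb the first piece using the $p$-intrinsic bound $\miint{Q_\sigma^\lambda(z_0)}(|Du|+|F|)^{\theta p}\,dz\leq c\lambda^{\theta p}$, which follows from \eqref{cond : p-phase condition}$_2$ and Jensen's inequality. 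For $\mathrm{J}_2$, \eqref{cond : p-phase condition}$_1$ yields $a(z)^{(q-1)/q}\leq c\lambda^{(p-q)(q-1)/q}$ pointwise on $Q_{10\rho}(z_0)$; after a further H\"older step (using $q-1\leq \theta p$) I split $q-1=(q-p+\alpha\mu_s/(n+s))+(p-1-\alpha\mu_s/(n+s))$, with the first piece nonnegative since $q\geq p$, and absorb it via the same $\lambda^{\theta p}$ bound.

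The $\lambda$-exponent in $\mathrm{J}_1$ collapses directly to $(2-p+\alpha\mu_s/(n+s))\theta p$; for $\mathrm{J}_2$ the algebraic identity $q(1-p+p/q)+(p-q)(q-1)=q(2-q)$ reduces the pre-absorption exponent to $(2-q)\theta p$, which combines with $(q-p+\alpha\mu_s/(n+s))\theta p$ from the absorbed piece to give the same target value. The only nontrivial point is verifying $p-1-\alpha\mu_s/(n+s)>0$; writing this as $4(p-1)(n+s)>\alpha s(p(n+2)-2n)$ and using $\alpha\leq 1$, it reduces to $4(p-1)n+s(n-2)(2-p)>0$, which holds since $p>1$, $p\leq 2$ and $n\geq 2$. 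Beyond this bookkeeping I do not foresee any substantive obstacle: the argument is essentially a parameter-shifted repetition of Lemma \ref{lem : p-intrinsic parabolic Poincare inequality of p-term in p-intrinsic cylinder with bounded solution}.
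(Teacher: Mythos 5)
Your proposal is correct and follows essentially the same route as the paper: apply Lemma \ref{lem : semi-Parabolic Poincare inequality} with $m=p$ together with Lemma \ref{lem : last term estimate in Lemma lem : semi-Parabolic Poincare inequality whenever p-intrinsic cylinder}, then handle both residual terms by H\"older's inequality (using $\theta>\frac{q-1}{p}$), the pointwise bound $a(z)^{\frac{q-1}{q}}\lesssim\lambda^{\frac{(p-q)(q-1)}{q}}$ from \eqref{cond : p-phase condition}$_1$, and the exponent split $p-1=\frac{\alpha\mu_s}{n+s}+\bigl(p-1-\frac{\alpha\mu_s}{n+s}\bigr)$ (resp.\ $q-1$) absorbed via $\miint{Q_\sigma^\lambda(z_0)}(|Du|+|F|)^{\theta p}\,dz\leq c\lambda^{\theta p}$ from \eqref{cond : p-phase condition}$_2$. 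Your verification that $p-1-\frac{\alpha\mu_s}{n+s}>0$ is the same computation the paper performs (reducing to $\frac{(2-p)(n-2)}{4}\geq 0$ via $\frac{s}{n+s}<1$), so no gap remains.
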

\begin{proof}
    By Lemmas \ref{lem : semi-Parabolic Poincare inequality} and \ref{lem : last term estimate in Lemma lem : semi-Parabolic Poincare inequality whenever p-intrinsic cylinder}, there exists a constant $c=c(\operatorname{data}_s)>1$ such that 
    \begin{align*}
        \miint{Q_\sigma^\lambda(z_0)}\frac{\Big|u-u_{Q_\sigma^\lambda(z_0)}\Big|^{\theta p}}{(\lambda^{\frac{p-2}{2}}\sigma)^{\theta p}}\, dz &\leq  c \miint{Q_\sigma^\lambda(z_0)}|D u|^{\theta p} \,dz\\
        &\quad +c\left(\lambda^{2-p}\miint{Q_\sigma^\lambda(z_0)} (|Du|+|F|)^{p-1}\, dz\right)^{\theta p}\\
        &\quad +c\left(\lambda^{1-p+\frac{p}{q}}\miint{Q_\sigma^\lambda(z_0)} a(z)^{\frac{q-1}{q}}(|Du|+|F|)^{q-1}\, dz\right)^{\theta p}.
    \end{align*}
    Note that $\frac{s}{n+s}<1$ implies  
    \begin{align*}
        p-1-\frac{\alpha\mu_s}{n+s}\geq p-1 -\frac{\mu_s}{n+s} > \frac{(2-p)(n-2)}{4}\geq 0.
    \end{align*}
    Using \eqref{cond : p-phase condition} and H\"{o}lder's inequality gives
    $$
    \begin{aligned}
        &\lambda^{(2-p)\theta p}\left(\miint{Q_\sigma^\lambda(z_0)} (|Du|+|F|)^{p-1}\, dz\right)^{\theta p}\\
        &\;  \leq \lambda^{(2-p)\theta p}\left(\miint{Q_\sigma^\lambda(z_0)} (|Du|+|F|)^{\theta p}\, dz\right)^{p-1}\\
        &\; =  \lambda^{(2-p)\theta p} \left(\miint{Q_\sigma^\lambda(z_0)} (|Du|+|F|)^{\theta p}\, dz\right)^{\frac{\alpha\mu_s}{n+s}} \left(\miint{Q_\sigma^\lambda(z_0)} (|Du|+|F|)^{\theta p}\, dz\right)^{p-1-\frac{\alpha\mu_s}{n+s}}\\
        &\; \leq \lambda^{\left(2-p+\frac{\alpha\mu_s}{n+s}\right)\theta p}\left(\miint{Q_\sigma^\lambda(z_0)} (|Du|+|F|)^{\theta p}\, dz\right)^{p-1-\frac{\alpha\mu_s}{n+s}}.
    \end{aligned}
    $$
    Moreover, it follows from \eqref{cond : p-phase condition} and H\"{o}lder's inequality that
    $$
    \begin{aligned}
        &\left(\lambda^{1-p+\frac{p}{q}}\miint{Q_\sigma^\lambda(z_0)} a(z)^{\frac{q-1}{q}}(|Du|+|F|)^{q-1}\, dz\right)^{\theta p}\\
        &\qquad \leq c\lambda^{\left(1-p+\frac{p}{q}+\frac{(p-q)(q-1)}{q}\right)\theta p}\left(\miint{Q_\sigma^\lambda(z_0)} (|Du|+|F|)^{q-1}\, dz\right)^{\theta p}\\
        &\qquad \leq c\lambda^{\left(2-p+\frac{\alpha\mu_s}{n+s}\right)\theta p}\left(\miint{Q_\sigma^\lambda(z_0)} (|Du|+|F|)^{\theta p}\, dz\right)^{p-1-\frac{\alpha\mu_s}{n+s}}
    \end{aligned}
    $$
    for some $c=c(\operatorname{data}_s)>1$. This completes the proof.
\end{proof}
\begin{lemma}\label{lem : p-intrinsic parabolic Poincare inequality of q-term in p-intrinsic cylinder with s}
    For $\sigma\in[2\rho,4\rho]$ and $\theta \in \left(\frac{q-1}{p},1\right]$, there exists a constant $c=c(\operatorname{data}_s)$ $>1$ such that
    \begin{align*}
        &\miint{Q_\sigma^\lambda(z_0)}\inf_{w\in Q_\sigma^\lambda (z_0)} a(w)^\theta \frac{\Big|u-u_{Q_\sigma^\lambda(z_0)}\Big|^{\theta q}}{(\lambda^{\frac{p-2}{2}}\sigma)^{\theta q}}\, dz\\
        &\qquad\qquad\leq c\miint{Q_\sigma^\lambda(z_0)} [H(z,|Du|)]^\theta\, dz\\
        &\qquad\qquad\quad +c\lambda^{\left(2-p+\frac{\alpha\mu_s}{n+s}\right)\theta p}\left(\miint{Q_\sigma^\lambda(z_0)} (|Du|+|F|)^{\theta p}\, dz\right)^{p-1-\frac{\alpha\mu_s}{n+s}}.
    \end{align*}
\end{lemma}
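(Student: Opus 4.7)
The plan is to mirror the proof of Lemma \ref{lem : p-intrinsic parabolic Poincare inequality of q-term in p-intrinsic cylinder with bounded solution} line by line, with the exponent $\frac{\alpha(p(n+2)-2n)}{8}$ systematically replaced by $\frac{\alpha\mu_s}{n+s}$. The assumption \eqref{cond : main assumption with s} enters only at one place, through the splitting of a power of the $\theta p$-average; this is precisely the replacement already implemented for the $p$-term in Lemma \ref{lem : p-intrinsic parabolic Poincare inequality of p-term in p-intrinsic cylinder with s}.

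First, I would apply Lemma \ref{lem : semi-Parabolic Poincare inequality} with $m=q$ and the prescribed $\theta$, then invoke Lemma \ref{lem : last term estimate in Lemma lem : semi-Parabolic Poincare inequality whenever p-intrinsic cylinder} to dispose of the source-term integral. Multiplying through by $\inf_{w\in Q_\sigma^\lambda(z_0)} a(w)^\theta$, this produces three contributions: a pure gradient term bounded by $c\miint{Q_\sigma^\lambda(z_0)}[H(z,|Du|)]^\theta\,dz$ via $\inf_{w\in Q_\sigma^\lambda(z_0)}a(w)^\theta\leq a(z)^\theta$; and two error terms of the form
$$
\inf_{w\in Q_\sigma^\lambda(z_0)} a(w)^\theta\Bigl(\lambda^{2-p}\miint{Q_\sigma^\lambda(z_0)}(|Du|+|F|)^{p-1}\,dz\Bigr)^{\theta q}
$$
and
$$
\inf_{w\in Q_\sigma^\lambda(z_0)} a(w)^\theta\Bigl(\lambda^{1-p+\frac{p}{q}}\miint{Q_\sigma^\lambda(z_0)} a(z)^{\frac{q-1}{q}}(|Du|+|F|)^{q-1}\,dz\Bigr)^{\theta q}.
$$

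Second, I would exploit the $p$-intrinsic stopping condition \eqref{cond : p-phase condition}$_1$ in two ways: first to obtain $\inf a(w)^\theta\leq c\lambda^{\theta(p-q)}$, and second to convert $a(z)^{(q-1)/q}$ pointwise into a power of $\lambda$. H\"older's inequality then elevates the internal $(p-1)$- and $(q-1)$-averages to the $\theta p$-average. After collecting powers of $\lambda$, both error terms reduce to
$$
c\lambda^{\theta p(2-p)}\Bigl(\miint{Q_\sigma^\lambda(z_0)}(|Du|+|F|)^{\theta p}\,dz\Bigr)^{q(p-1)/p},
$$
by the exact algebraic manipulation carried out in the bounded case.

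Third, I would rewrite the remaining exponent as
$$
\frac{q(p-1)}{p}=\Bigl(p-1-\frac{\alpha\mu_s}{n+s}\Bigr)+\Bigl(\frac{(q-p)(p-1)}{p}+\frac{\alpha\mu_s}{n+s}\Bigr),
$$
and use \eqref{cond : p-phase condition}$_3$ together with H\"older's inequality to absorb the second bracket via $\miint{Q_\sigma^\lambda(z_0)}(|Du|+|F|)^{\theta p}\,dz\leq c\lambda^{\theta p}$. A direct bookkeeping of the $\lambda$-exponents, identical to that in Lemma \ref{lem : p-intrinsic parabolic Poincare inequality of q-term in p-intrinsic cylinder with bounded solution}, telescopes the total power of $\lambda$ to $\bigl(2-p+\frac{\alpha\mu_s}{n+s}\bigr)\theta p$, giving the claimed bound. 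There is no substantive obstacle: the only input from \eqref{cond : main assumption with s} is the inequality $p-1-\frac{\alpha\mu_s}{n+s}>0$, which was already verified in the proof of Lemma \ref{lem : p-intrinsic parabolic Poincare inequality of p-term in p-intrinsic cylinder with s}, so the splitting and H\"older steps are legitimate. The only point demanding attention is tracking the $\lambda$-exponents carefully through each application of H\"older's inequality.
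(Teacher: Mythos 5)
Your proposal follows the paper's proof of this lemma essentially line by line: Lemma \ref{lem : semi-Parabolic Poincare inequality} with $m=q$, Lemma \ref{lem : last term estimate in Lemma lem : semi-Parabolic Poincare inequality whenever p-intrinsic cylinder} for the source term, the bound $\inf_{w}a(w)^\theta\leq c\lambda^{\theta(p-q)}$ coming from \eqref{cond : p-phase condition}$_1$, H\"older's inequality to raise the $(p-1)$- and $(q-1)$-averages to the $\theta p$-average, and the splitting of the excess exponent using $\miint{Q_\sigma^\lambda(z_0)}(|Du|+|F|)^{\theta p}\, dz\leq c\lambda^{\theta p}$; the only input from \eqref{cond : main assumption with s} is indeed $p-1-\frac{\alpha\mu_s}{n+s}>0$, and the final exponent you state is the correct one. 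There is, however, one concrete bookkeeping slip in your intermediate display: after applying $\inf_w a(w)^\theta\leq c\lambda^{\theta(p-q)}$ and H\"older, the $\lambda$-prefactor of the error terms is $\lambda^{\theta(p-q)+(2-p)\theta q}=\lambda^{\theta p(2-p)-\theta(p-1)(q-p)}$, not $\lambda^{\theta p(2-p)}$. The factor $\lambda^{-\theta(p-1)(q-p)}$ you dropped is exactly what cancels the $\lambda^{\theta(p-1)(q-p)}$ generated in your third step when the excess power $\frac{(q-p)(p-1)}{p}$ of the $\theta p$-average is converted into a power of $\lambda$ via \eqref{cond : p-phase condition}; if one computes literally from your displayed intermediate form, the final bound carries the spurious extra factor $\lambda^{\theta(p-1)(q-p)}\geq 1$ and so is strictly weaker than the lemma. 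With the prefactor corrected, the exponents telescope to $\left(2-p+\frac{\alpha\mu_s}{n+s}\right)\theta p$ exactly as in the bounded-solution lemma you are mirroring, so the argument closes as claimed.
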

\begin{proof}
    By Lemmas \ref{lem : semi-Parabolic Poincare inequality} and \ref{lem : last term estimate in Lemma lem : semi-Parabolic Poincare inequality whenever p-intrinsic cylinder}, there exists a constant $c=c(\operatorname{data}_s)>1$ such that 
    \begin{align*}
        &\miint{Q_\sigma^\lambda(z_0)}\inf_{w\in Q_\sigma^\lambda (z_0)} a(w)^\theta \frac{\Big|u-u_{Q_\sigma^\lambda(z_0)}\Big|^{\theta q}}{(\lambda^{\frac{p-2}{2}}\sigma)^{\theta q}}\, dz\\
        &\qquad\qquad\leq  c \miint{Q_\sigma^\lambda(z_0)} \inf_{w\in Q_\sigma^\lambda (z_0)} a(w)^\theta|D u|^{\theta q} \,dz\\
        &\qquad\qquad\quad +c\inf_{w\in Q_\sigma^\lambda (z_0)} a(w)^\theta\left(\lambda^{2-p}\miint{Q_\sigma^\lambda(z_0)} (|Du|+|F|)^{p-1}\, dz\right)^{\theta q}\\
        &\qquad\qquad\quad +c\inf_{w\in Q_\sigma^\lambda (z_0)} a(w)^\theta\left(\lambda^{1-p+\frac{p}{q}}\miint{Q_\sigma^\lambda(z_0)} a(z)^{\frac{q-1}{q}}(|Du|+|F|)^{q-1}\, dz\right)^{\theta q}.
    \end{align*}
    By \eqref{cond : p-phase condition} and H\"{o}lder's inequality, we have
    $$
    \begin{aligned}
        &\inf_{w\in Q_\sigma^\lambda (z_0)} a(w)^\theta\left(\lambda^{2-p}\miint{Q_\sigma^\lambda(z_0)} (|Du|+|F|)^{p-1}\, dz\right)^{\theta q}\\
        &\qquad \leq \inf_{w\in Q_\sigma^\lambda (z_0)} a(w)^\theta\lambda^{(2-p)\theta q}\left(\miint{Q_\sigma^\lambda(z_0)} (|Du|+|F|)^{\theta p}\, dz\right)^{\frac{q(p-1)}{p}}\\
        &\qquad \leq  c\lambda^{\theta p\left[1-\frac{q}{p}+(2-p)\frac{q}{p}+(p-1)(\frac{q}{p}-1)+\frac{\alpha\mu_s}{n+s}\right]}\left(\miint{Q_\sigma^\lambda(z_0)} (|Du|+|F|)^{\theta p}\, dz\right)^{p-1-\frac{\alpha\mu_s}{n+s}}\\
        &\qquad= c\lambda^{\theta p \left(2-p+\frac{\alpha\mu_s}{n+s}\right)} \left(\miint{Q_\sigma^\lambda(z_0)} (|Du|+|F|)^{\theta p}\, dz\right)^{p-1-\frac{\alpha\mu_s}{n+s}}
    \end{aligned}
    $$
    for some $c=c(\operatorname{data}_s)> 1$. Also, arguing in the same way, we get the following:
    $$
    \begin{aligned}
        &\inf_{w\in Q_\sigma^\lambda (z_0)} a(w)^\theta\left(\lambda^{1-p+\frac{p}{q}}\miint{Q_\sigma^\lambda(z_0)} a(z)^{\frac{q-1}{q}}(|Du|+|F|)^{q-1}\, dz\right)^{\theta q}\\
        &\qquad\qquad \leq c\lambda^{\theta p \left(2-p+\frac{\alpha\mu_s}{n+s}\right)} \left(\miint{Q_\sigma^\lambda(z_0)} (|Du|+|F|)^{\theta p}\, dz\right)^{p-1-\frac{\alpha\mu_s}{n+s}}
    \end{aligned}
    $$
    for some $c=c(\operatorname{data}_s)> 1$.
\end{proof}
Now, we proceed to estimate 
$$
S(u,Q_{2\rho}^\lambda(z_0))=\sup_{I_{2\rho}(t_0)}\dashint_{B_{2\rho}^\lambda(x_0)}\frac{\Big|u-u_{Q_{2\rho}^\lambda (z_0)}\Big|^2}{\left(2\lambda^{\frac{p-2}{2}}\rho\right)^2}\, dx.
$$
\begin{lemma}\label{lem : estimate of S in p-intrinsic cylinder of s}
    There exists a constant $c=c(\operatorname{data}_s)>1$ such that
    $$
    S(u,Q_{2\rho}^\lambda (z_0))=\sup_{I_{2\rho}(t_0)}\dashint_{B_{2\rho}^\lambda(x_0)}\frac{\Big|u-u_{Q_{2\rho}^\lambda (z_0)}\Big|^2}{\left(2\lambda^{\frac{p-2}{2}}\rho\right)^2}\, dx \leq c\lambda^2.
    $$
\end{lemma}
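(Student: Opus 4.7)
The proof follows the template of Lemma \ref{lem : estimate of S in p-intrinsic cylinder of bounded solution}, with the caveat that the bounded-solution estimate used there to control the $[a]_\alpha\rho^\alpha$-weighted $q$-contribution is no longer available. The plan is to replace that step by a spatial/temporal interpolation argument modeled on the derivation of \eqref{eq : estimate of I_2 in lem_no occurence with q<=s}--\eqref{eq : estimate of I_2 in lem_no occurence with q>s} in Lemma \ref{lem : no occurence with s<infty}, and keep every other step essentially identical.

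\textbf{Caccioppoli and routine terms.} For $2\rho\leq \rho_1<\rho_2\leq 4\rho$, applying Lemma \ref{lem : Caccioppoli inequality} on the $p$-intrinsic cylinders $Q_{\rho_1}^\lambda(z_0)\subset Q_{\rho_2}^\lambda(z_0)$ yields the usual splitting
$$
\lambda^{p-2}S(u,Q_{\rho_1}^\lambda(z_0))\leq \frac{c\rho_2^q}{(\rho_2-\rho_1)^q}(\mathrm{I}_p+\mathrm{I}_q)+\frac{c\rho_2^2}{(\rho_2-\rho_1)^2}\mathrm{I}_2+c\,\mathrm{I}_F,
$$
where $\mathrm{I}_p,\mathrm{I}_q,\mathrm{I}_2,\mathrm{I}_F$ denote the $p$-, $q$-, $L^2$- and source-integral averages on $Q_{\rho_2}^\lambda(z_0)$. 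I would treat $\mathrm{I}_p$ via Lemma \ref{lem : p-intrinsic parabolic Poincare inequality of p-term in p-intrinsic cylinder with s} (with $\theta=1$) combined with \eqref{cond : p-phase condition}$_2$ to get $\mathrm{I}_p\leq c\lambda^p$. For $\mathrm{I}_q$, split $a(z)\leq \inf_{Q_{\rho_2}^\lambda(z_0)}a(\cdot)+[a]_\alpha(2\rho_2)^\alpha$; the infimum piece is bounded by $c\lambda^p$ using Lemma \ref{lem : p-intrinsic parabolic Poincare inequality of q-term in p-intrinsic cylinder with s} and \eqref{cond : p-phase condition}$_2$. The $L^2$ term $\mathrm{I}_2$ is handled exactly as in the bounded-solution case via the argument in \cite[Lemma 3.6]{Wontae2024}, producing $c\lambda^{p-1}S(u,Q_{\rho_2}^\lambda(z_0))^{1/2}$; and $\mathrm{I}_F\leq c\lambda^p$ directly from \eqref{cond : p-phase condition}$_2$. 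After these reductions, Young's inequality and Lemma \ref{lem : a standard iteration lemma} applied to the function $\rho_1\mapsto S(u,Q_{\rho_1}^\lambda(z_0))$ give the claimed bound $S(u,Q_{2\rho}^\lambda(z_0))\leq c\lambda^2$.

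\textbf{Main obstacle: the $[a]_\alpha\rho_2^\alpha$ piece of $\mathrm{I}_q$.} This is the only substantively new calculation. I would mimic the proof of Lemma \ref{lem : no occurence with s<infty} and split on $q$ versus $s$. If $q\leq s$, H\"older in space combined with the $C(0,T;L^s(\Omega))$ bound gives
$$
\rho_2^\alpha\miint{Q_{\rho_2}^\lambda(z_0)}\frac{\big|u-u_{Q_{\rho_2}^\lambda(z_0)}\big|^q}{(\lambda^{\frac{p-2}{2}}\rho_2)^q}\,dz\leq c\lambda^{\frac{(2-p)(n+s)q}{2s}}\rho_2^{\alpha-\frac{q(n+s)}{s}},
$$
and then \eqref{eq : decay estimate of rho and lambda}, \eqref{calculate mu_2} together with \eqref{cond : main assumption with s}$_2$ collapse the right-hand side to $\leq c\lambda^p$. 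If $q>s$, I would decompose $|u-u_{Q_{\rho_2}^\lambda(z_0)}|^q\lesssim |u-u_{B_{\rho_2}^\lambda(x_0)}(t)|^q+|u_{B_{\rho_2}^\lambda(x_0)}(t)-u_{Q_{\rho_2}^\lambda(z_0)}|^q$, apply the Gagliardo-Nirenberg multiplicative embedding of \cite[Theorem 2.1, Remark 2.1, Section I]{1993_Degenerate_parabolic_equations_DiBenedetto} to the spatial oscillation with $\theta_1=(s^{-1}-q^{-1})(n^{-1}+s^{-1}-p^{-1})^{-1}$, absorb the resulting $|Du|^p$-factor via \eqref{cond : p-phase condition}$_3$ (which gives $\miint |Du|^p\,dz\leq \lambda^p$), and treat the time-oscillation piece through Jensen and the $C(0,T;L^s)$-bound, just as in the derivation of the estimate of $\mathrm{J}_1,\mathrm{J}_2$ in Lemma \ref{lem : no occurence with s<infty}. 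Both subcases produce the same power $\rho_2^{\alpha-q(n+s)/\mu_s}\lesssim \rho_2^{-p(n+s)/\mu_s}$, which combines with the decay estimate to yield $\leq c\lambda^p$.

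The hardest step will be tracking the numerous exponents in the $q>s$ Gagliardo-Nirenberg estimate and verifying, via the identity \eqref{calculate mu_2} and $2\mu_s=s\mu_2$, that the resulting powers of $\lambda$ and $\rho_2$ assemble into the sharp bound $c\lambda^p$; everything else is bookkeeping parallel to the bounded case.
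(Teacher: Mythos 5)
Your overall architecture (Caccioppoli on $2\rho\leq\rho_1<\rho_2\leq4\rho$, the split of the $q$-term via $a(z)\leq\inf a+[a]_\alpha\rho_2^\alpha$, the treatment of $\mathrm{I}_p$, the infimum piece, $\mathrm{I}_2$, $\mathrm{I}_F$, then Young and Lemma \ref{lem : a standard iteration lemma}) coincides with the paper's proof. The divergence — and the gap — is entirely in your treatment of the $[a]_\alpha\rho_2^\alpha$-weighted $q$-term.

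You propose to transplant the estimates \eqref{eq : estimate of I_2 in lem_no occurence with q<=s}--\eqref{eq : estimate of I_2 in lem_no occurence with q>s} from Lemma \ref{lem : no occurence with s<infty}, ending with a bound of the form $c\rho_2^{\alpha-q(n+s)/\mu_s}\leq c\rho_2^{-p(n+s)/\mu_s}$, and you assert this "combines with the decay estimate to yield $\leq c\lambda^p$." That last step is false. The decay estimate \eqref{eq : decay estimate of rho and lambda} reads $\lambda\leq c\rho^{-(n+2)/\mu_2}$, i.e.\ it bounds $\rho$ \emph{from above} in terms of $\lambda$; converting $\rho_2^{-p(n+s)/\mu_s}\leq c\lambda^p$ would instead require a \emph{lower} bound $\rho_2\gtrsim\lambda^{-\mu_s/(n+s)}$, which is not available (the stopping radius can be arbitrarily small relative to $\lambda$ when $H(z,|Du|)$ concentrates). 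The transplant fails precisely because the logical roles are reversed: in Lemma \ref{lem : no occurence with s<infty} the bound $\lambda_w^p\leq c\varrho_w^{-p(n+s)/\mu_s}$ \emph{is} the desired conclusion, whereas here you need a bound \emph{by} $c\lambda^p$, so you cannot afford to end with a surplus negative power of $\rho_2$.

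The paper's fix is to factor out $\lambda^p$ \emph{before} invoking the $C(0,T;L^s)$ norm: it applies Lemma \ref{lem : Gagliardo-Nirenberg inequality} (not DiBenedetto's version) with $p_1=q$, $p_2=p$, $p_3=s$ and $\vartheta=p/q$ — admissible because the gap bound forces $q\leq p+ps/n$ — which yields
$$
\rho_2^\alpha\miint{Q_{\rho_2}^\lambda(z_0)}\frac{\bigl|u-u_{Q_{\rho_2}^\lambda(z_0)}\bigr|^q}{\bigl(\lambda^{\frac{p-2}{2}}\rho_2\bigr)^q}\,dz
\leq c\,\rho_2^\alpha\,\lambda^p\left(\sup_{I_{\rho_2}(t_0)}\dashint_{B_{\rho_2}^\lambda(x_0)}\frac{|u|^s}{\bigl(\lambda^{\frac{p-2}{2}}\rho_2\bigr)^s}\,dx\right)^{\frac{q-p}{s}},
$$
the first factor being exactly the $p$-Poincar\'e bound $\leq c\lambda^p$ raised to the power $1$. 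Only the excess $q-p$ powers pass through the $L^s$ bound, so the residual exponent of $\rho_2$ is $\alpha-(q-p)(n+s)/\mu_s\geq0$ by \eqref{cond : main assumption with s}, hence harmless. No case split between $q\leq s$ and $q>s$ is needed. Your $q>s$ branch could in principle be repaired by keeping $\lambda^{q\theta_1}$ from the gradient factor and converting only the excess $\lambda$-power beyond $p$ into $\rho_2$-powers (a computation shows the residual exponent is then $\geq \frac{n(s-2)(p+q\theta_1)}{2\mu_s}\geq0$), but your $q\leq s$ branch, which pushes all $q$ powers through the $L^s$ norm, cannot be repaired and must be replaced by the interpolation above.
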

\begin{proof}
    Let $2\rho\leq \rho_1<\rho_2\leq 4\rho$. By Lemma \ref{lem : Caccioppoli inequality}, there exists a constant $c=c(n,p,q,\nu,L)>1$ such that
    \begin{align*}
        &\lambda^{p-2}S(u,Q_{\rho_1}^\lambda (z_0)) \\
        &\qquad\leq \frac{c\rho_2^q}{(\rho_2-\rho_1)^q}\miint{Q_{\rho_2}^\lambda (z_0)}\left(\frac{\Big|u-u_{Q_{\rho_2}^\lambda (z_0)}\Big|^p}{\left(\lambda^{\frac{p-2}{2}}\rho_2\right)^p}+a(z)\frac{\Big|u-u_{Q_{\rho_2}^\lambda(z_0)}\Big|^q}{\left(\lambda^{\frac{p-2}{2}}\rho_2\right)^q}\right)\, dz \\
        &\qquad\quad+\frac{c\rho_2^2}{(\rho_2-\rho_1)^2}\miint{Q_{\rho_2}^\lambda (z_0)}\frac{\Big|u-u_{Q_{\rho_2}^\lambda(z_0)}\Big|^2}{\rho_2^2}\, dz+c\miint{Q_{\rho_2}^\lambda (z_0)} H(z,|F|)\, dz.
    \end{align*}
    By \eqref{cond : p-phase condition}$_2$ and Lemma \ref{lem : p-intrinsic parabolic Poincare inequality of p-term in p-intrinsic cylinder with s}, we obtain
    \begin{equation}    \label{eq : p-term in S in p-intrinsic cylinder}
    \miint{Q_{\rho_2}^\lambda (z_0)}\frac{\Big|u-u_{Q_{\rho_2}^\lambda (z_0)}\Big|^p}{\left(\lambda^{\frac{p-2}{2}}\rho_2\right)^p}\, dz \leq c\lambda^p
    \end{equation}
    for some $c=c(\operatorname{data}_s)>1$. Note that
    $$
    \begin{aligned}
        \miint{Q_{\rho_2}^\lambda (z_0)}a(z)\frac{\Big|u-u_{Q_{\rho_2}^\lambda(z_0)}\Big|^q}{\left(\lambda^{\frac{p-2}{2}}\rho_2\right)^q}\, dz &\leq \miint{Q_{\rho_2}^\lambda (z_0)} \inf_{w\in Q_{\rho_2}^\lambda(z_0)} a(w)\frac{\Big|u-u_{Q_{\rho_2}^\lambda(z_0)}\Big|^q}{\left(\lambda^{\frac{p-2}{2}}\rho_2\right)^q}\, dz\\
        &\qquad +[a]_\alpha \rho_2^\alpha\miint{Q_{\rho_2}^\lambda (z_0)}\frac{\Big|u-u_{Q_{\rho_2}^\lambda(z_0)}\Big|^q}{\left(\lambda^{\frac{p-2}{2}}\rho_2\right)^q}\, dz.
    \end{aligned}
    $$
    We deduce from \eqref{cond : p-phase condition}$_2$ and Lemma \ref{lem : p-intrinsic parabolic Poincare inequality of q-term in p-intrinsic cylinder with s} that
    $$
    \miint{Q_{\rho_2}^\lambda (z_0)} \inf_{w\in Q_{\rho_2}^\lambda(z_0)} a(w)\frac{\Big|u-u_{Q_{\rho_2}^\lambda(z_0)}\Big|^q}{\left(\lambda^{\frac{p-2}{2}}\rho_2\right)^q}\, dz\leq c\lambda^p
    $$
    for some $c=c(\operatorname{data}_s)>1$. Note that
    $$
    \begin{aligned}
        -\frac{n}{q}\leq \frac{p}{q}\left(1-\frac{n}{p}\right)-\left(1-\frac{p}{q}\right)\frac{n}{s}\quad \iff\quad q\leq p+\frac{ps}{n}.
    \end{aligned}
    $$
    Since $n< 2(n+s)$, $\alpha\leq 1$ and $p-2 \leq 0$, we obtain from \eqref{cond : main assumption with s} that
    $$
    q\leq p +\frac{\alpha \mu_s}{n+s}\leq p+\frac{(n(p-2)+2p)s}{4(n+s)}\leq p+\frac{ps}{n},
    $$
    which implies that the assumption of Lemma \ref{lem : Gagliardo-Nirenberg inequality} with $p_1=q$, $p_2=p$, $p_3=s$ and $\vartheta=\frac{p}{q}$ is satisfied. Thus, it follows from \eqref{cond : main assumption with s}, \eqref{eq : p-term in S in p-intrinsic cylinder}, \eqref{cond : p-phase condition}$_2$ and Lemma \ref{lem : Gagliardo-Nirenberg inequality} that
    $$
    \begin{aligned}
        \rho_2^\alpha\miint{Q_{\rho_2}^\lambda (z_0)}\frac{\Big|u-u_{Q_{\rho_2}^\lambda(z_0)}\Big|^q}{\left(\lambda^{\frac{p-2}{2}}\rho_2\right)^q}\, dz &\leq c\rho_2^\alpha \left(\miint{Q_{\rho_2}^\lambda (z_0)}\left(\frac{\Big|u-u_{Q_{\rho_2}^\lambda(z_0)}\Big|^p}{\left(\lambda^{\frac{p-2}{2}}\rho_2\right)^p}+|Du|^p\right)\, dz\right)\\
        &\qquad\times \left(\sup_{I_{\rho_2}(t_0)}\dashint_{B_{\rho_2}^\lambda (x_0)} \frac{\Big|u-u_{Q_{\rho_2}^\lambda(z_0)}\Big|^s}{\left(\lambda^{\frac{p-2}{2}}\rho_2\right)^s}\, dx\right)^{\frac{q-p}{s}}\\
        &\leq c \rho_2^\alpha \lambda^p\left(\sup_{I_{\rho_2}(t_0)}\dashint_{B_{\rho_2}^\lambda (x_0)} \frac{|u|^s}{\left(\lambda^{\frac{p-2}{2}}\rho_2\right)^s}\, dx\right)^{\frac{q-p}{s}}\\
        &\leq c\rho_2^{\alpha-\frac{(q-p)(n+s)}{s}}\lambda^{\frac{(2-p)(q-p)(n+s)}{2s}}\lambda^p.
    \end{aligned}
    $$
    Since $2\rho\leq \rho_2\leq 4\rho$ and $\mu_s=\frac{s\mu_2}{2}$, we observe from \eqref{eq : decay estimate of rho and lambda} and \eqref{calculate mu_2} that
    $$
    \begin{aligned}
        \rho_2^{\alpha-\frac{(q-p)(n+s)}{s}}\lambda^{\frac{(2-p)(q-p)(n+s)}{2s}} \leq c \rho_2^{\alpha -\frac{(q-p)(n+s)(2\mu_2+(2-p)(n+2))}{2s\mu_2}}=c\rho_2^{\alpha - \frac{(q-p)(n+s)}{\mu_s}}.
    \end{aligned}
    $$
    Since $q\leq p+ \frac{\alpha \mu_s}{n+s}$ implies $\alpha -\frac{(q-p)(n+s)}{\mu_s}\geq 0$, we get
    $$
    \begin{aligned}
        \rho_2^\alpha\miint{Q_{\rho_2}^\lambda (z_0)}\frac{\Big|u-u_{Q_{\rho_2}^\lambda(z_0)}\Big|^q}{\left(\lambda^{\frac{p-2}{2}}\rho_2\right)^q}\, dz \leq c \lambda^p
    \end{aligned}
    $$
    for some $c=c(\operatorname{data}_s)>1$. Next, in the same manner as in \cite[Lemma 3.6]{Wontae2024}, we obtain
    $$
    \begin{aligned}
        &\miint{Q_{\rho_2}^\lambda (z_0)}\frac{\Big|u-u_{Q_{\rho_2}^\lambda(z_0)}\Big|^2}{\rho_2^2}\, dz\leq c\lambda^{p-1}S(u,Q_{\rho_2}^\lambda(z_0))^{\frac{1}{2}}
    \end{aligned}
    $$
    for some $c=c(\operatorname{data}_s)>1$. Finally, we obtain from \eqref{cond : p-phase condition}$_2$ that 
    $$
    c\miint{Q_{\rho_2}^\lambda (z_0)} H(z,|F|)\, dz\leq c\lambda^p.
    $$
    Combining the above inequalities gives
    $$
    S(u,Q_{\rho_1}^\lambda(z_0))\leq \frac{c\rho_2^q}{(\rho_2-\rho_1)^q}\lambda^2+\frac{c\rho_2^2}{(\rho_2-\rho_1)^2}\lambda S(u,Q_{\rho_2}^\lambda (z_0))^\frac{1}{2}
    $$
    for some $c=c(\operatorname{data}_s)>1$. By Young's inequality, we have
    $$
    S(u,Q_{\rho_1}^\lambda(z_0))\leq \frac{1}{2}S(u,Q_{\rho_2}^\lambda (z_0))+c\left(\frac{\rho_2^q}{(\rho_2-\rho_1)^q}+\frac{\rho_2^4}{(\rho_2-\rho_1)^4}\right)\lambda^2.
    $$
    Therefore, the conclusion follows from Lemma \ref{lem : a standard iteration lemma}.
\end{proof}
We now estimate the first term on the right-hand side in Lemma \ref{lem : Caccioppoli inequality}, assuming \eqref{cond : main assumption with s} and \eqref{cond : p-phase condition}.
\begin{lemma}\label{lem : estimate of the first term in Lemma 2.1 in p-intrinsic cylinder with s}
    There exist constants $c=c(\operatorname{data}_s)>1$ and $\theta_1=\theta_1(n,p,q,s)\in (0,1)$ such that for any $\theta\in (\theta_1,1)$, we obtain
    $$
    \begin{aligned}
        &\miint{Q_{2\rho}^\lambda (z_0)}\left(\frac{\Big|u-u_{Q_{2\rho}^\lambda (z_0)}\Big|^p}{\left(2\lambda^{\frac{p-2}{2}}\rho\right)^p}+a(z)\frac{\Big|u-u_{Q_{2\rho}^\lambda (z_0)}\Big|^q}{\left(2\lambda^{\frac{p-2}{2}}\rho\right)^q}\right)\, dz\\
        &\qquad\leq c \lambda^{(1-\theta)p} \miint{Q_{2\rho}^\lambda (z_0)}\left(\frac{\Big|u-u_{Q_{2\rho}^\lambda (z_0)}\Big|^{\theta p}}{\left(2\lambda^{\frac{p-2}{2}}\rho\right)^{\theta p}}+|Du|^{\theta p}\right)\, dz\\
        &\qquad\quad + c \lambda^{(1-\theta)p} \miint{Q_{2\rho}^\lambda (z_0)}\inf_{w\in Q_{2\rho}^\lambda (z_0)} a(w)^\theta \left(\frac{\Big|u-u_{Q_{2\rho}^\lambda (z_0)}\Big|^{\theta q}}{\left(2\lambda^{\frac{p-2}{2}}\rho\right)^{\theta q}}+|Du|^{\theta q}\right)\, dz.
    \end{aligned}
    $$
\end{lemma}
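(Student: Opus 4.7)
The plan is to follow the architecture of Lemma \ref{lem : estimate of the first term in Lemma 2.1 in p-intrinsic cylinder with bounded solution}, replacing the use of $u \in L^\infty(\Omega_T)$ by the $L^s$-in-time control available under \eqref{cond : main assumption with s}. First I would split the left-hand side into three pieces via the oscillation bound
$$
a(z) \leq \inf_{w \in Q_{2\rho}^\lambda(z_0)} a(w) + [a]_\alpha (2\rho)^\alpha,
$$
namely a $p$-part $\miint{Q_{2\rho}^\lambda(z_0)} \Big|u - u_{Q_{2\rho}^\lambda(z_0)}\Big|^p / (2\lambda^{\frac{p-2}{2}}\rho)^p\, dz$, an $\inf a$-weighted $q$-part, and a residual $[a]_\alpha (2\rho)^\alpha$-coupled $q$-part.

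For the first two pieces, I would apply Lemma \ref{lem : Gagliardo-Nirenberg inequality} with parameters $(p_1,p_2,p_3,\vartheta)$ as in \cite[Lemma 3.7]{Wontae2024} (in particular with $p_3 = 2$), combined with the supremum estimate $S(u, Q_{2\rho}^\lambda(z_0)) \leq c\lambda^2$ from Lemma \ref{lem : estimate of S in p-intrinsic cylinder of s}. This step carries over verbatim from the bounded-solution case since it uses only the $L^2$-in-time bound, which has already been established under \eqref{cond : main assumption with s} in Lemma \ref{lem : estimate of S in p-intrinsic cylinder of s}.

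The main obstacle is the residual $[a]_\alpha (2\rho)^\alpha$-term, because the proof under \eqref{cond : main assumption with infty} absorbed $\Big|u - u_{Q_{2\rho}^\lambda(z_0)}\Big|^{q-p}$ into $\|u\|_{L^\infty}^{q-p}$, which is now unavailable. Following the remark in Section \ref{section 1}, I would split the argument according to $s$. For $4 < s < \infty$, we have $q \leq p + \alpha \mu_s/(n+s) \leq p + 1 \leq 3 < s$, so H\"older's inequality in the $s$-direction reduces the $q$-average of $\big|u - u_{Q_{2\rho}^\lambda(z_0)}\big|$ to its $s$-average, which is then controlled by $\|u\|_{C(0,T;L^s(\Omega))}^s / |B_{2\rho}^\lambda(x_0)|$. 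For $2 \leq s \leq 4$, the value of $q$ may exceed $s$; in this regime I would mimic the strategy of Lemma \ref{lem : no occurence with s<infty} for $q > s$, namely split
$$
u - u_{Q_{2\rho}^\lambda(z_0)} = \big(u - u_{B_{2\rho}^\lambda(x_0)}(t)\big) + \big(u_{B_{2\rho}^\lambda(x_0)}(t) - u_{Q_{2\rho}^\lambda(z_0)}\big),
$$
apply Lemma \ref{lem : Gagliardo-Nirenberg inequality} slice by slice with $p_1 = q$, $p_2 = p$, $p_3 = s$, $\vartheta = p/q$ on the first summand, and use the $L^s$-control of the time oscillation of spatial averages on the second summand. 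In both subcases, invoking Lemma \ref{lem : decay estimate} to get $\lambda \leq c \rho^{-(n+2)/\mu_2}$ and the identities \eqref{calculate mu_2} and $2\mu_s = s \mu_2$, the resulting $\rho$-exponent collapses to $\alpha - (q-p)(n+s)/\mu_s \geq 0$ by \eqref{cond : main assumption with s}$_2$.

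Consequently the residual term is absorbed into a constant multiple of the $p$-part, and the proof concludes via one further application of Lemma \ref{lem : Gagliardo-Nirenberg inequality} together with Lemma \ref{lem : estimate of S in p-intrinsic cylinder of s}, exactly as in the first step, to reduce the exponents on $\big|u - u_{Q_{2\rho}^\lambda(z_0)}\big|$ from $p$ and $q$ to $\theta p$ and $\theta q$ respectively. The threshold $\theta_1 = \theta_1(n, p, q, s) \in (0, 1)$ is then determined by the admissibility constraints on $\vartheta$ and the $p_i$'s across all invocations of Lemma \ref{lem : Gagliardo-Nirenberg inequality} above.
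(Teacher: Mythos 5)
Your decomposition of the left-hand side into a $p$-part, an $\inf a$-weighted $q$-part and a residual $[a]_\alpha(2\rho)^\alpha$-weighted $q$-part, and your treatment of the first two pieces via Lemma \ref{lem : Gagliardo-Nirenberg inequality} and Lemma \ref{lem : estimate of S in p-intrinsic cylinder of s}, match the paper. The gap is in the residual term, which is the crux of the lemma. The conclusion requires that this term be bounded by $c\lambda^{(1-\theta)p}$ \emph{times the $\theta p$-energy average}; a pure bound of the form $c\rho^{\text{exponent}}$ or $c\lambda^{p}$ is useless here, since there is no lower bound on $\miint{Q_{2\rho}^\lambda(z_0)}(\cdots)^{\theta p}\,dz$ with which to convert back. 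For $s>4$, applying H\"older to pass from the $q$-average to the $s$-average of $|u-u_{Q_{2\rho}^\lambda(z_0)}|/(2\lambda^{\frac{p-2}{2}}\rho)$ produces the $\rho$-exponent $\alpha-\frac{q(n+s)}{\mu_s}$, not $\alpha-\frac{(q-p)(n+s)}{\mu_s}$: the factor $q-p$ can only appear if $\theta p$ of the $q$ powers are retained inside an energy average, which requires the sup-interpolation of Lemma \ref{lem : Gagliardo-Nirenberg inequality} with $p_2=\theta p$ and $\vartheta=\theta p/q$, not plain H\"older. For $2\le s\le 4$, your slicewise Gagliardo--Nirenberg with $p_2=p$, $\vartheta=p/q$ (as in Lemma \ref{lem : no occurence with s<infty}) yields the full $p$-energy $\miint{Q_{2\rho}^\lambda(z_0)}\big[|u-u_{Q_{2\rho}^\lambda(z_0)}|^p/(2\lambda^{\frac{p-2}{2}}\rho)^p+|Du|^p\big]\,dz$ to the first power; the extra $\miint{Q_{2\rho}^\lambda(z_0)}|Du|^p\,dz$ cannot be rewritten as $\lambda^{(1-\theta)p}\miint{Q_{2\rho}^\lambda(z_0)}|Du|^{\theta p}\,dz$ (H\"older goes the wrong way), and bounding it by $\lambda^p$ via \eqref{cond : p-phase condition}$_3$ destroys the reverse-H\"older structure needed in Lemma \ref{lem : the reverse Holder inequality in the p-intrinsic case of s}. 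Your closing plan --- ``absorb into a constant multiple of the $p$-part, then reapply Gagliardo--Nirenberg'' --- fails for the same reason: absorption at exponent $p$ requires peeling off $|u-u_{Q_{2\rho}^\lambda(z_0)}|^{q-p}$ pointwise, which is exactly the step that $L^\infty$ provided and $C(0,T;L^s)$ does not.

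The case split at $s=4$ is also there for a different reason than the dichotomy $q\lessgtr s$ you invoke (that dichotomy belongs to Lemma \ref{lem : no occurence with s<infty}). The paper applies Lemma \ref{lem : Gagliardo-Nirenberg inequality} to the residual with $p_1=q$, $p_2=\theta p$, $\vartheta=\theta p/q$ in both regimes; the admissibility requirement with $p_3=2$, namely $\frac{nq}{p(n+2)}<1$, is verified from the gap bound only when $s\le 4$. For $s>4$ the paper introduces the auxiliary exponent $\tilde{p}=\frac{2s(q-p\theta)}{ps(1-\theta)+2(q-p)}\in(s-1,s)$ as $p_3$, and then interpolates the $L^{\tilde{p}}$-sup between the $L^2$-sup (controlled by $S(u,Q_{2\rho}^\lambda(z_0))\le c\lambda^2$ and contributing exactly $\lambda^{(1-\theta)p}$, since $(q-p\theta)\tilde{\theta}=p(1-\theta)$) and the $L^s$-sup (contributing the $q-p$ powers that are neutralized by $\rho^\alpha$, \eqref{eq : decay estimate of rho and lambda} and \eqref{cond : main assumption with s}$_2$). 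This construction is the essential new ingredient for $s>4$ and is absent from your proposal.
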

\begin{proof}
    Observe that
    \begin{align*}
        &\miint{Q_{2\rho}^\lambda (z_0)}\left(\frac{\Big|u-u_{Q_{2\rho}^\lambda (z_0)}\Big|^p}{\left(2\lambda^{\frac{p-2}{2}}\rho\right)^p}+a(z)\frac{\Big|u-u_{Q_{2\rho}^\lambda (z_0)}\Big|^q}{\left(2\lambda^{\frac{p-2}{2}}\rho\right)^q}\right)\, dz \\
        &\qquad\leq c \miint{Q_{2\rho}^\lambda (z_0)} \frac{\Big|u-u_{Q_{2\rho}^\lambda (z_0)}\Big|^p}{\left(2\lambda^{\frac{p-2}{2}}\rho\right)^p}\, dz + \miint{Q_{2\rho}^\lambda (z_0)} \inf_{w\in Q_{2\rho}^\lambda (z_0)} a(w) \frac{\Big|u-u_{Q_{2\rho}^\lambda (z_0)}\Big|^{q}}{\left(2\lambda^{\frac{p-2}{2}}\rho\right)^{ q}}\, dz \\
        &\qquad\qquad + [a]_\alpha (2\rho)^\alpha \miint{Q_{2\rho}^\lambda (z_0)} \frac{\Big|u-u_{Q_{2\rho}^\lambda (z_0)}\Big|^{q}}{\left(2\lambda^{\frac{p-2}{2}}\rho\right)^{q}}\, dz.
    \end{align*}
    To estimate the first and second terms on the right-hand side, we deduce from Lemma \ref{lem : Gagliardo-Nirenberg inequality}, similarly to \cite[Lemma 3.7]{Wontae2024}, and Lemma \ref{lem : estimate of S in p-intrinsic cylinder of s} that for $\theta \in \left(\frac{n}{n+2},1\right)$,
    $$
    \begin{aligned}
        &\miint{Q_{2\rho}^\lambda (z_0)} \frac{\Big|u-u_{Q_{2\rho}^\lambda (z_0)}\Big|^p}{\left(2\lambda^{\frac{p-2}{2}}\rho\right)^p}\, dz + \miint{Q_{2\rho}^\lambda (z_0)} \inf_{w\in Q_{2\rho}^\lambda (z_0)} a(w) \frac{\Big|u-u_{Q_{2\rho}^\lambda (z_0)}\Big|^{q}}{\left(2\lambda^{\frac{p-2}{2}}\rho\right)^{ q}}\, dz\\
        &\qquad\leq c \lambda^{(1-\theta)p} \miint{Q_{2\rho}^\lambda (z_0)}\left(\frac{\Big|u-u_{Q_{2\rho}^\lambda (z_0)}\Big|^{\theta p}}{\left(2\lambda^{\frac{p-2}{2}}\rho\right)^{\theta p}}+|Du|^{\theta p}\right)\, dz\\
        &\qquad\quad + c \lambda^{(1-\theta)p} \miint{Q_{2\rho}^\lambda (z_0)}\inf_{w\in Q_{2\rho}^\lambda (z_0)} a(w)^\theta \left(\frac{\Big|u-u_{Q_{2\rho}^\lambda (z_0)}\Big|^{\theta q}}{\left(2\lambda^{\frac{p-2}{2}}\rho\right)^{\theta q}}+|Du|^{\theta q}\right)\, dz
    \end{aligned}
    $$
    for some $c=c(\operatorname{data}_s)>1$. On the other hand, to estimate the last term, we treat the cases $2\leq s \leq  4$ and $4<s<\infty$ separately. First, we assume $2\leq s\leq 4$. To use Lemma \ref{lem : Gagliardo-Nirenberg inequality} with $p_1=q$, $p_2=\theta p$, $p_3=2$ and $\vartheta=\frac{\theta p}{q}$ for any $\theta\in \left(\frac{nq}{p(n+2)},1\right)$, we check that $\frac{nq}{p(n+2)}<1$ and the assumption in Lemma \ref{lem : Gagliardo-Nirenberg inequality} is satisfied. Since $\mu_s=\frac{(p(n+2)-2n)s}{4}$ and $\alpha\leq 1$, \eqref{cond : main assumption with s} implies
    $$
    \begin{aligned}
        \frac{nq}{p(n+2)}\leq \frac{n}{n+2}\left(1+\frac{\alpha \mu_s}{p(n+s)}\right)&\leq \frac{n}{n+2}\left(1+\frac{(p(n+2)-2n)s}{4p(n+s)}\right)\\
        &=\frac{(4p+ps-2s)n^2+6psn}{4pn^2+(4ps+8p)n+8ps}.
    \end{aligned}
    $$
    Since $4pn^2+(4ps+8p)n+8ps-((4p+ps-2s)n^2+6psn)=s(2-p)n^2 + 2p(4-s)n + 8ps>0$,
    $$
    \frac{nq}{p(n+2)}\leq \frac{(4p+ps-2s)n^2+6psn}{4pn^2+(4ps+8p)n+8ps} <1.
    $$
    Next, we note that
    $$
    \begin{aligned}
        -\frac{n}{q}\leq \frac{\theta p}{q}\left(1-\frac{n}{\theta p}\right)-\left(1-\frac{\theta p}{q}\right)\frac{n}{2}\quad \iff \quad \frac{n q}{p(n+2)}\leq \theta,
    \end{aligned}
    $$
    and so, the assumption of Lemma \ref{lem : Gagliardo-Nirenberg inequality} holds for $\theta \in \left(\frac{nq}{p(n+2)},1\right)$. Thus, we obtain from Lemmas \ref{lem : Gagliardo-Nirenberg inequality} and \ref{lem : estimate of S in p-intrinsic cylinder of s} that
    \begin{align*}
    &(2\rho)^\alpha \miint{Q_{2\rho}^\lambda (z_0)} \frac{\Big|u-u_{Q_{2\rho}^\lambda (z_0)}\Big|^{q}}{\left(2\lambda^{\frac{p-2}{2}}\rho\right)^{q}}\, dz\\
    &\qquad\leq c \miint{Q_{2\rho}^\lambda (z_0)} \left(\frac{\Big|u-u_{Q_{2\rho}^\lambda (z_0)}\Big|^{\theta p}}{\left(2\lambda^{\frac{p-2}{2}}\rho\right)^{\theta p} } + |Du|^{\theta p}\right)\, dz\\
    &\qquad\quad \times \sup_{I_{2\rho}(t_0)}\left(\dashint_{B_{2\rho}^\lambda (x_0)} \frac{\Big|u-u_{Q_{2\rho}^\lambda (z_0)}\Big|^{2}}{\left(2\lambda^{\frac{p-2}{2}}\rho\right)^{2}}\, dx\right)^{\frac{(1-\theta)p}{2}}\\
    &\qquad\quad \times (2\rho)^\alpha\sup_{I_{2\rho}(t_0)}\left(\dashint_{B_{2\rho}^\lambda (x_0)} \frac{\Big|u-u_{Q_{2\rho}^\lambda (z_0)}\Big|^{2}}{\left(2\lambda^{\frac{p-2}{2}}\rho\right)^{2}}\, dx\right)^{\frac{q-p}{2}}\\
    &\qquad\leq c \lambda^{(1-\theta)p}\miint{Q_{2\rho}^\lambda (z_0)} \left(\frac{\Big|u-u_{Q_{2\rho}^\lambda (z_0)}\Big|^{\theta p}}{\left(2\lambda^{\frac{p-2}{2}}\rho\right)^{\theta p} } + |Du|^{\theta p}\right)\, dz\\
    &\qquad\quad \times (2\rho)^\alpha\sup_{I_{2\rho}(t_0)}\left(\dashint_{B_{2\rho}^\lambda (x_0)} \frac{\Big|u-u_{Q_{2\rho}^\lambda (z_0)}\Big|^{s}}{\left(2\lambda^{\frac{p-2}{2}}\rho\right)^{s}}\, dx\right)^{\frac{q-p}{s}}.
    \end{align*}
    As in the proof of Lemma \ref{lem : estimate of S in p-intrinsic cylinder of s}, we obtain
    $$(2\rho)^\alpha\sup_{I_{2\rho}(t_0)}\left(\dashint_{B_{2\rho}^\lambda (x_0)} \frac{\Big|u-u_{Q_{2\rho}^\lambda (z_0)}\Big|^{s}}{\left(2\lambda^{\frac{p-2}{2}}\rho\right)^{s}}\, dx\right)^{\frac{q-p}{s}}\leq c(\operatorname{data}_s),
    $$
    and hence
    \begin{equation*}
    \begin{aligned}
        &(2\rho)^\alpha \miint{Q_{2\rho}^\lambda (z_0)} \frac{\Big|u-u_{Q_{2\rho}^\lambda (z_0)}\Big|^{q}}{\left(2\lambda^{\frac{p-2}{2}}\rho\right)^{q}}\, dz\\
        &\qquad\qquad\qquad\qquad\leq c\lambda^{(1-\theta)p}\miint{Q_{2\rho}^\lambda (z_0)} \left(\frac{\Big|u-u_{Q_{2\rho}^\lambda (z_0)}\Big|^{\theta p}}{\left(2\lambda^{\frac{p-2}{2}}\rho\right)^{\theta p} } + |Du|^{\theta p}\right)\, dz.
    \end{aligned}
    \end{equation*}
    Next, we assume that $4<s<\infty$. Let
    $$
    \theta \in \left(\frac{ps(s-3)-2(q-p)}{ps(s-3)},1\right)\quad\text{and}\quad \tilde{p}=\frac{2s(q-p\theta)}{ps(1-\theta)+2(q-p)}.
    $$
    Since $q-p<1$ and $s>4> 2$, we get $\theta >0$ and $\tilde{p}<s$. Also, by the range of $\theta$, we obtain $s-1<\tilde{p}$. Since $2<s-1<\tilde{p}$ and $\frac{q}{p}\leq 1 +\frac{\mu_s}{(n+s)p}$, we have
    $$
    \begin{aligned}
        \frac{nq}{p(n+\tilde{p})}< \frac{nq}{(n+s-1)p}&< \frac{n}{n+s-1}\left(1+\frac{(p(n+2)-2n)s}{4p(n+s)}\right)\\
        &=\frac{(4p+ps-2s)n^2+6psn}{4pn^2+(8ps-4p)n+4ps(s-1)}.
    \end{aligned}
    $$
    Since $4pn^2+(8ps-4p)n+4ps(s-1)-((4p+ps-2s)n^2+6psn)=s(2-p)n^2 + 2p(s-2)n +4ps(s-1)>0$, we see that
    $$
    \begin{aligned}
        \frac{nq}{p(n+\tilde{p})}<\frac{(4p+ps-2s)n^2+6psn}{4pn^2+(8ps-4p)n+4ps(s-1)}<1.
    \end{aligned}
    $$
    Since 
    $$
    \begin{aligned}
        -\frac{n}{q}\leq \frac{\theta p}{q}\left(1-\frac{n}{\theta p}\right)-\left(1-\frac{\theta p}{q}\right)\frac{n}{\tilde{p}}\quad \iff \quad \frac{n q}{p(n+\tilde{p})}\leq \theta,
    \end{aligned}
    $$
    the assumption in Lemma \ref{lem : Gagliardo-Nirenberg inequality} with $p_1=q$, $p_2=\theta p$, $p_3=\tilde{p}$ and $\vartheta=\frac{\theta p}{q}$ is satisfied for any $\theta \in \left(\frac{ps(s-3)-2(q-p)}{ps(s-3)},1\right)$. Thus, we deduce from Lemma \ref{lem : Gagliardo-Nirenberg inequality} that
    \begin{align*}
    &(2\rho)^\alpha \miint{Q_{2\rho}^\lambda (z_0)} \frac{\Big|u-u_{Q_{2\rho}^\lambda (z_0)}\Big|^{q}}{\left(2\lambda^{\frac{p-2}{2}}\rho\right)^{q}}\, dz\\
    &\qquad\leq c \miint{Q_{2\rho}^\lambda (z_0)} \left(\frac{\Big|u-u_{Q_{2\rho}^\lambda (z_0)}\Big|^{\theta p}}{\left(2\lambda^{\frac{p-2}{2}}\rho\right)^{\theta p} } + |Du|^{\theta p}\right)\, dz\\
    &\qquad\quad \times (2\rho)^\alpha\sup_{I_{2\rho}(t_0)}\left(\dashint_{B_{2\rho}^\lambda (x_0)} \frac{\Big|u-u_{Q_{2\rho}^\lambda (z_0)}\Big|^{\tilde{p}}}{\left(2\lambda^{\frac{p-2}{2}}\rho\right)^{\tilde{p}}}\, dx\right)^{\frac{q-p\theta}{\tilde{p}}}.
    \end{align*}
    The interpolation inequality for $L^p$-norms implies that
    $$
    \begin{aligned}
        &(2\rho)^\alpha\sup_{I_{2\rho}(t_0)}\left(\dashint_{B_{2\rho}^\lambda (x_0)} \frac{\Big|u-u_{Q_{2\rho}^\lambda (z_0)}\Big|^{\tilde{p}}}{\left(2\lambda^{\frac{p-2}{2}}\rho\right)^{\tilde{p}}}\, dx\right)^{\frac{q-p\theta}{\tilde{p}}}\\
        &\qquad \leq \sup_{I_{2\rho}(t_0)} \left(\dashint_{B_{2\rho}(x_0)^\lambda} \frac{\Big|u-u_{Q_{2\rho}^\lambda (z_0)}\Big|^{2}}{\left(2\lambda^{\frac{p-2}{2}}\rho\right)^{2}}\, dx\right)^{\frac{q-p\theta}{2}\tilde{\theta}}\\
        &\qquad\quad \times (2\rho)^\alpha\sup_{I_{2\rho}(t_0)} \left(\dashint_{B_{2\rho}^\lambda(x_0)} \frac{\Big|u-u_{Q_{2\rho}^\lambda (z_0)}\Big|^{s}}{\left(2\lambda^{\frac{p-2}{2}}\rho\right)^{s}}\, dx\right)^{\frac{q-p\theta}{s}(1-\tilde{\theta})},
    \end{aligned}
    $$
    where $\tilde{\theta}=\frac{2(s-\tilde{p})}{\tilde{p}(s-2)}$ and $1-\tilde{\theta}=\frac{s(\tilde{p}-2)}{\tilde{p}(s-2)}$. Note that
    $$
    (q-p\theta)\tilde{\theta}=p(1-\theta)\quad\text{and}\quad (q-p\theta)(1-\tilde{\theta})=q-p.
    $$
    By Lemma \ref{lem : estimate of S in p-intrinsic cylinder of s}, \eqref{eq : decay estimate of rho and lambda} and \eqref{calculate mu_2}, we have
    $$
    \begin{aligned}
        &(2\rho)^\alpha\sup_{I_{2\rho}(t_0)}\left(\dashint_{B_{2\rho}^\lambda (x_0)} \frac{\Big|u-u_{Q_{2\rho}^\lambda (z_0)}\Big|^{\tilde{p}}}{\left(2\lambda^{\frac{p-2}{2}}\rho\right)^{\tilde{p}}}\, dx\right)^{\frac{q-p\theta}{\tilde{p}}}\\
        &\qquad \leq c\lambda^{(q-p\theta)\tilde{\theta}} (2\rho)^\alpha \lambda^{\frac{(2-p)(n+s)}{2s}(q-p\theta)(1-\tilde{\theta})}\rho^{-\frac{n+s}{s}(q-p\theta)(1-\tilde{\theta})}\\
        &\qquad \leq c\lambda^{(1-\theta)p}\rho^{\alpha - \frac{(n+s)(q-p)((n+2)(2-p)+2\mu_2)}{2s\mu_2}}\\
        &\qquad \leq c\lambda^{(1-\theta)p}\rho^{\alpha - \frac{(n+s)(q-p)}{\mu_s}}\\
        &\qquad \leq c\lambda^{(1-\theta)p},
    \end{aligned}
    $$
    where $c=c(\operatorname{data}_s)$.
    Thus, we obtain
    \begin{align*}
    &(2\rho)^\alpha \miint{Q_{2\rho}^\lambda (z_0)} \frac{\Big|u-u_{Q_{2\rho}^\lambda (z_0)}\Big|^{q}}{\left(2\lambda^{\frac{p-2}{2}}\rho\right)^{q}}\, dz \\
    &\qquad\leq c \lambda^{(1-\theta)p}\miint{Q_{2\rho}^\lambda (z_0)} \left(\frac{\Big|u-u_{Q_{2\rho}^\lambda (z_0)}\Big|^{\theta p}}{\left(2\lambda^{\frac{p-2}{2}}\rho\right)^{\theta p} } + |Du|^{\theta p}\right)\, dz.
    \end{align*}

    Therefore, we conclude that for any $\theta \in (\theta_1,1)$,
    $$
    \begin{aligned}
        &\miint{Q_{2\rho}^\lambda (z_0)}\left(\frac{\Big|u-u_{Q_{2\rho}^\lambda (z_0)}\Big|^p}{\left(2\lambda^{\frac{p-2}{2}}\rho\right)^p}+a(z)\frac{\Big|u-u_{Q_{2\rho}^\lambda (z_0)}\Big|^q}{\left(2\lambda^{\frac{p-2}{2}}\rho\right)^q}\right)\, dz\\
        &\qquad \leq c \lambda^{(1-\theta)p} \miint{Q_{2\rho}^\lambda (z_0)}\left(\frac{\Big|u-u_{Q_{2\rho}^\lambda (z_0)}\Big|^{\theta p}}{\left(2\lambda^{\frac{p-2}{2}}\rho\right)^{\theta p}}+|Du|^{\theta p}\right)\, dz\\
        &\qquad\quad + c \lambda^{(1-\theta)p} \miint{Q_{2\rho}^\lambda (z_0)}\inf_{w\in Q_{2\rho}^\lambda (z_0)} a(w)^\theta \left(\frac{\Big|u-u_{Q_{2\rho}^\lambda (z_0)}\Big|^{\theta q}}{\left(2\lambda^{\frac{p-2}{2}}\rho\right)^{\theta q}}+|Du|^{\theta q}\right)\, dz
    \end{aligned}
    $$
    for some $c=c(\operatorname{data}_s)>1$, where 
    $$
    \theta_1=\begin{cases}
        \frac{nq}{p(n+2)}\quad&\text{if }2\leq s\leq 4,\\
        \\
        \max\left\{\frac{n}{n+2},\frac{ps(s-3)-2(q-p)}{ps(s-3)}\right\}\quad &\text{if }4<s<\infty.
    \end{cases}
    $$
\end{proof}
Now, we prove the reverse H\"{o}lder inequality in the $p$-intrinsic case.
\begin{lemma}\label{lem : the reverse Holder inequality in the p-intrinsic case of s}
    There exist constants $c=c(\operatorname{data}_s)>1$ and $\theta_0=\theta_0(n,p,q,s)\in (0,1)$ such that for any $\theta\in (\theta_0,1)$, 
    $$
    \miint{Q_\rho^\lambda (z_0)} H(z,|Du|)\, dz\leq c \left(\miint{Q_{2\rho}^\lambda (z_0)} [H(z,|Du|)]^\theta \, dz\right)^\frac{1}{\theta}+c\miint{Q_{2\rho}^\lambda (z_0)} H(z,|F|)\, dz.
    $$
\end{lemma}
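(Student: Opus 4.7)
The plan is to follow the blueprint of Lemma \ref{lem : the reverse Holder inequality in the p-intrinsic case of bounded solution}, simply replacing each bounded-case ingredient by its freshly proven $s$-counterpart and adjusting the gap exponent from $\frac{\alpha(p(n+2)-2n)}{8}$ to $\frac{\alpha\mu_s}{n+s}$. Start with Caccioppoli's inequality (Lemma \ref{lem : Caccioppoli inequality}) applied on $Q_{2\rho}^\lambda(z_0)$ to split the left-hand side into three blocks: a zeroth-order $(p,q)$-block with normalizations $\lambda^{\frac{p-2}{2}}\rho$, an $L^2$-block weighted by the factor $\lambda^{p-2}$ coming from the $\ell^{-1}$ denominator, and the source-term block $\miint H(z,|F|)\,dz$.

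For the $(p,q)$-block, apply Lemma \ref{lem : estimate of the first term in Lemma 2.1 in p-intrinsic cylinder with s} to reduce the exponents to the sub-critical level $\theta p$ and $\theta q$ at the price of a factor $\lambda^{(1-\theta)p}$, and then invoke Lemmas \ref{lem : p-intrinsic parabolic Poincare inequality of p-term in p-intrinsic cylinder with s} and \ref{lem : p-intrinsic parabolic Poincare inequality of q-term in p-intrinsic cylinder with s} to transfer these averages into $\miint [H(z,|Du|)]^\theta\,dz$ plus a tail of the form
$$
\lambda^{\left(2-p+\frac{\alpha\mu_s}{n+s}\right)\theta p}\left(\miint{Q_{2\rho}^\lambda(z_0)} (|Du|+|F|)^{\theta p}\,dz\right)^{p-1-\frac{\alpha\mu_s}{n+s}}.
$$
Setting $\beta\coloneq \min\left\{p-1-\frac{\alpha\mu_s}{n+s},\frac{1}{2}\right\}>0$ (positive by the same elementary calculation used in Lemma \ref{lem : p-intrinsic parabolic Poincare inequality of p-term in p-intrinsic cylinder with s}), I split $p-1-\frac{\alpha\mu_s}{n+s} = \beta + \text{remainder}$ and absorb the remaining power of $(|Du|+|F|)^{\theta p}$ into a power of $\lambda$ via \eqref{cond : p-phase condition}$_2$, yielding a bound of the form $c\lambda^{(1-\beta\theta)p}[(\miint[H(z,|Du|)]^\theta)^\beta + (\miint H(z,|F|))^{\beta\theta}]$.

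For the $L^2$-block, apply the Gagliardo–Nirenberg inequality (Lemma \ref{lem : Gagliardo-Nirenberg inequality}) with $p_1=2,\,p_2=\theta p,\,p_3=2,\,\vartheta=\tfrac12$, whose hypothesis $\frac{2n}{(n+2)p}\le\theta$ is available because $p>\frac{2n}{n+2}$, and bound the resulting supremum by $S(u,Q_{2\rho}^\lambda(z_0))^{1/2}\le c\lambda$ using Lemma \ref{lem : estimate of S in p-intrinsic cylinder of s}; a further application of Lemma \ref{lem : p-intrinsic parabolic Poincare inequality of p-term in p-intrinsic cylinder with s} to the remaining $|u-\bar u|^{\theta p}$-factor then produces a bound of the form $c\lambda^{p-\beta}[(\miint[H(z,|Du|)]^\theta)^{\beta/(\theta p)} + (\miint H(z,|F|))^{\beta/p}]$, mirroring \eqref{eq : estimation of 2 term in H in p-intrinsic cylinder of bounded solution using Caccioppoli inequality}.

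Set
$$
\theta_0 \coloneq \max\left\{\theta_1,\,\tfrac{q-1}{p},\,\tfrac{2n}{(n+2)p}\right\},
$$
where $\theta_1$ is the threshold from Lemma \ref{lem : estimate of the first term in Lemma 2.1 in p-intrinsic cylinder with s}, so that all invoked sublemmas apply simultaneously for $\theta\in(\theta_0,1)$. Combining the two blocks with the source-term block and applying Young's inequality on both factors $\lambda^{(1-\beta\theta)p}(\cdots)^\beta$ and $\lambda^{p-\beta}(\cdots)^{\beta/(\theta p)}$ (with exponents $1/\beta$ and $p/\beta$ respectively), all the pure-$\lambda$ residues aggregate into a term $\tfrac12\lambda^p$, which by \eqref{cond : p-phase condition}$_3$ equals $\tfrac12\miint{Q_\rho^\lambda(z_0)}[H(z,|Du|)+H(z,|F|)]\,dz$ and can be reabsorbed on the left-hand side, delivering the claim. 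The main obstacle I foresee is purely bookkeeping: verifying that the exponent identities $2\mu_s = s\mu_2$ and \eqref{calculate mu_2} make the tail factors produced by Lemmas \ref{lem : p-intrinsic parabolic Poincare inequality of p-term in p-intrinsic cylinder with s}–\ref{lem : p-intrinsic parabolic Poincare inequality of q-term in p-intrinsic cylinder with s} land exactly at the exponent $p-1-\frac{\alpha\mu_s}{n+s}$ and not off by the scaling deficit, and that the compound threshold $\theta_0<1$ is genuinely attainable under the gap bound \eqref{cond : main assumption with s}$_2$.
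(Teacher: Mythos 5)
Your proposal is correct and follows essentially the same route as the paper: Caccioppoli to split into the $(p,q)$-block, the $\lambda^{p-2}$-weighted $L^2$-block, and the $F$-block; Lemma \ref{lem : estimate of the first term in Lemma 2.1 in p-intrinsic cylinder with s} combined with Lemmas \ref{lem : p-intrinsic parabolic Poincare inequality of p-term in p-intrinsic cylinder with s}--\ref{lem : p-intrinsic parabolic Poincare inequality of q-term in p-intrinsic cylinder with s} for the first block; Gagliardo--Nirenberg with $p_1=p_3=2$, $p_2=\theta p$, $\vartheta=\tfrac12$ together with Lemma \ref{lem : estimate of S in p-intrinsic cylinder of s} for the second; and the same $\beta$, the same threshold $\theta_0=\max\{\theta_1,\tfrac{q-1}{p},\tfrac{2n}{(n+2)p}\}$, and the same absorption of $\tfrac12\lambda^p$ via \eqref{cond : p-phase condition}$_3$. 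No gaps.
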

\begin{proof}
    It follows from Lemma \ref{lem : Caccioppoli inequality} that 
    \begin{align}
        \miint{Q_\rho^\lambda (z_0)} H(z,|Du|)\, dz&\leq c \miint{Q_{2\rho}^\lambda (z_0)} \left(\frac{\Big|u-u_{Q_{2\rho}^\lambda (z_0)}\Big|^p}{\left(2\lambda^{\frac{p-2}{2}}\rho\right)^p}+a(z)\frac{\Big|u-u_{Q_{2\rho}^\lambda (z_0)}\Big|^q}{\left(2\lambda^{\frac{p-2}{2}}\rho\right)^q}\right)\, dz \nonumber\\
        &\quad +c\lambda^{p-2} \miint{Q_{2\rho}^\lambda (z_0)} \frac{\Big|u-u_{Q_{2\rho}^\lambda (z_0)}\Big|^2}{\left(2\lambda^{\frac{p-2}{2}}\rho\right)^2}\, dz\nonumber\\ \label{eq : estimation in H in p-intrinsic cylinder of s using Caccioppoli inequality}
        &\quad+c\miint{Q_{2\rho}^\lambda (z_0)} H(z,|F|)\, dz,
    \end{align}
    where $c=c(n,p,q,\nu,L)>1$. Take $\theta_2\coloneq \max\left\{\theta_1,\frac{q-1}{p}\right\}$, where $\theta_1$ is defined in Lemma \ref{lem : estimate of the first term in Lemma 2.1 in p-intrinsic cylinder with s}. For $\theta \in (\theta_2,1)$, using Lemmas \ref{lem : estimate of the first term in Lemma 2.1 in p-intrinsic cylinder with s}, \ref{lem : p-intrinsic parabolic Poincare inequality of p-term in p-intrinsic cylinder with s}, \ref{lem : p-intrinsic parabolic Poincare inequality of q-term in p-intrinsic cylinder with s} and Young's inequality yields
    $$
    \begin{aligned}
        &\miint{Q_{2\rho}^\lambda (z_0)} \left(\frac{\Big|u-u_{Q_{2\rho}^\lambda (z_0)}\Big|^p}{\left(2\lambda^{\frac{p-2}{2}}\rho\right)^p}+a(z)\frac{\Big|u-u_{Q_{2\rho}^\lambda (z_0)}\Big|^q}{\left(2\lambda^{\frac{p-2}{2}}\rho\right)^q}\right)\, dz\\
        &\qquad \leq c \lambda^{(1-\theta)p} \miint{Q_{2\rho}^\lambda (z_0)} [H(z,|Du|)]^\theta \, dz\\
        &\qquad \quad + c \lambda^{\left(1-p+\frac{\alpha \mu_s}{n+s}\right)\theta p+p} \left(\miint{Q_{2\rho}^\lambda (z_0)} (|Du|+|F|)^{\theta p}\, dz\right)^{p-1-\frac{\alpha \mu_s}{n+s}}
    \end{aligned}
    $$
    for some $c=c(\operatorname{data}_s)>1$. Recall that $p-1-\frac{\alpha \mu_s}{n+s}>0$.
    Putting
    $$
    \beta \coloneq \min\left\{p-1-\frac{\alpha\mu_s}{n+s},\frac{1}{2}\right\},
    $$
    we obtain
    \begin{align}
        &\miint{Q_{2\rho}^\lambda (z_0)} \left(\frac{\Big|u-u_{Q_{2\rho}^\lambda (z_0)}\Big|^p}{\left(2\lambda^{\frac{p-2}{2}}\rho\right)^p}+a(z)\frac{\Big|u-u_{Q_{2\rho}^\lambda (z_0)}\Big|^q}{\left(2\lambda^{\frac{p-2}{2}}\rho\right)^q}\right)\, dz \nonumber\\
        &\qquad \leq c \lambda^{(1-\beta\theta)p} \left(\miint{Q_{2\rho}^\lambda (z_0)} [H(z,|Du|)]^\theta \, dz\right)^\beta \nonumber\\\label{eq : estimation of p,q term in H in p-intrinsic cylinder of s using Caccioppoli inequality}
        &\qquad\quad +c \lambda^{(1-\beta\theta)p} \left(\miint{Q_{2\rho}^\lambda (z_0)} H(z,|F|) \, dz\right)^{\beta \theta}.
    \end{align}
    In the same way as in Lemma \ref{lem : the reverse Holder inequality in the p-intrinsic case of bounded solution}, we have
    \begin{align} 
        \lambda^{p-2} \miint{Q_{2\rho}^\lambda (z_0)} \frac{\Big|u-u_{Q_{2\rho}^\lambda (z_0)}\Big|^2}{\left(2\lambda^{\frac{p-2}{2}}\rho\right)^2}\, dz &\leq c\lambda^{p-\beta}\left(\miint{Q_{2\rho}^\lambda (z_0)} [H(z,|Du|)]^\theta\, dz\right)^{\frac{\beta}{\theta p}}\nonumber\\\label{eq : estimation of 2 term in H in p-intrinsic cylinder of s using Caccioppoli inequality}
        &\quad + c\lambda^{p-\beta}\left(\miint{Q_{2\rho}^\lambda (z_0)} H(z,|F|)\, dz\right)^{\frac{\beta}{p}}.
    \end{align}
    Combining \eqref{eq : estimation in H in p-intrinsic cylinder of s using Caccioppoli inequality}, \eqref{eq : estimation of p,q term in H in p-intrinsic cylinder of s using Caccioppoli inequality} and \eqref{eq : estimation of 2 term in H in p-intrinsic cylinder of s using Caccioppoli inequality} implies that for $\theta\in (\theta_0,1)$,
    $$
    \begin{aligned}
        \miint{Q_\rho^\lambda (z_0)} H(z,|Du|)\, dz&\leq c\lambda^{p-\beta}\left(\miint{Q_{2\rho}^\lambda (z_0)} [H(z,|Du|)]^\theta\, dz\right)^{\frac{\beta}{\theta p}}\\
        &\quad +c\lambda^{p-\beta}\left(\miint{Q_{2\rho}^\lambda (z_0)} H(z,|F|)\, dz\right)^{\frac{\beta}{p}},
    \end{aligned}
    $$
    where $\theta_0=\max\{\theta_2,\frac{2n}{(n+2)p}\}$ and $c=c(\operatorname{data}_s)>1$. It follows from Young's inequality that
    $$
    \begin{aligned}
        &\miint{Q_\rho^\lambda (z_0)} H(z,|Du|)\, dz\\
        &\qquad\leq \frac{1}{2}\lambda^p+c\left(\miint{Q_{2\rho}^\lambda (z_0)} [H(z,|Du|)]^\theta\, dz\right)^{\frac{1}{\theta}}+c\miint{Q_{2\rho}^\lambda (z_0)} H(z,|F|)\, dz.
    \end{aligned}
    $$
    Thus, we conclude from \eqref{cond : p-phase condition}$_3$ that
    $$
    \miint{Q_\rho^\lambda (z_0)} H(z,|Du|)\, dz\leq c\left(\miint{Q_{2\rho}^\lambda (z_0)} [H(z,|Du|)]^\theta\, dz\right)^{\frac{1}{\theta}}+c\miint{Q_{2\rho}^\lambda (z_0)} H(z,|F|)\, dz.
    $$
\end{proof}

Lastly, the following lemma will be used in the proof of the gradient higher integrability results. For the proof of this lemma, we refer to \cite[Lemma 3.9]{Wontae2024}.
\begin{lemma}\label{lem : level set in p-intrinsic cylinder}
    Let $u$ be a weak solution to \eqref{eq : the main equation} and assume that $Q_{4\rho}^\lambda(z_0)\subset \Omega_T$ satisfies \eqref{cond : p-phase condition}. Moreover, we assume either \eqref{cond : main assumption with s} or \eqref{cond : main assumption with infty}. Then there exist constants $c=c(\operatorname{data})>1$ and $\theta_0\in (0,1)$ such that for any $\theta\in(\theta_0,1)$,
    $$
    \begin{aligned}
        \iints{Q_{2\kappa \rho}^\lambda (z_0)} H(z,|Du|)\, dz &\leq c\Lambda^{1-\theta} \iints{Q_{2\rho}^\lambda(z_0)\cap \Psi(c^{-1}\Lambda)} [H(z,|Du|)]^\theta\, dz\\
        &\quad + c \iints{Q_{2\rho}^\lambda (z_0)\cap \Phi(c^{-1}\Lambda)} H(z,|F|)\, dz,
    \end{aligned}
    $$
    where $$\theta_0=\begin{cases}
        \theta_0(n,p,q)\quad &\text{if \eqref{cond : main assumption with infty} holds},\\
        \theta_0(n,p,q,s)\quad &\text{if \eqref{cond : main assumption with s} holds}.
    \end{cases}$$
\end{lemma}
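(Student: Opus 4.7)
The plan is to deduce the claimed level-set inequality from the reverse Hölder estimate already in place --- Lemma \ref{lem : the reverse Holder inequality in the p-intrinsic case of bounded solution} under \eqref{cond : main assumption with infty} or Lemma \ref{lem : the reverse Holder inequality in the p-intrinsic case of s} under \eqref{cond : main assumption with s} --- combined with the stopping-time properties \eqref{cond : p-phase condition}$_2$ and \eqref{cond : p-phase condition}$_3$. A central observation used throughout is the two-sided comparability $\lambda^p \leq \Lambda \leq (K+1)\lambda^p$, which follows from the $p$-phase condition \eqref{cond : p-phase condition}$_1$ together with $\Lambda = H_{z_0}(\lambda)$.

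First I would apply \eqref{cond : p-phase condition}$_2$ at $\sigma = 2\kappa\rho$ to bound the left-hand side directly by
$$
\iints{Q_{2\kappa\rho}^\lambda(z_0)} H(z,|Du|)\, dz \leq (2\kappa)^{n+2}|Q_\rho^\lambda(z_0)|\,\lambda^p,
$$
which reduces matters to estimating $\lambda^p$. By \eqref{cond : p-phase condition}$_3$, $\lambda^p$ equals the average of $H(z,|Du|)+H(z,|F|)$ over $Q_\rho^\lambda(z_0)$; applying the reverse Hölder inequality to the $H(z,|Du|)$-part and the trivial measure-cancellation $\miint{Q_\rho^\lambda(z_0)} H(z,|F|)\,dz \leq 2^{n+2}\miint{Q_{2\rho}^\lambda(z_0)} H(z,|F|)\,dz$ to the $H(z,|F|)$-part gives, for any $\theta \in (\theta_0,1)$,
$$
\lambda^p \leq c\left(\miint{Q_{2\rho}^\lambda(z_0)} [H(z,|Du|)]^\theta\,dz\right)^{1/\theta} + c\miint{Q_{2\rho}^\lambda(z_0)} H(z,|F|)\,dz.
$$

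Next I would split each average at the threshold $\eta\Lambda$ for a small $\eta>0$ to be chosen. On the complements of $\Psi(\eta\Lambda)$ and $\Phi(\eta\Lambda)$ the integrands are pointwise bounded by $\eta\Lambda$, and the subadditivity $(a+b)^{1/\theta} \leq 2^{1/\theta}(a^{1/\theta}+b^{1/\theta})$ leads to
$$
\lambda^p \leq c\,I_1^{1/\theta} + c\,J_1 + c\,\eta\Lambda,
$$
where $I_1 \coloneq \frac{1}{|Q_{2\rho}^\lambda(z_0)|}\iints{Q_{2\rho}^\lambda(z_0)\cap\Psi(\eta\Lambda)}[H(z,|Du|)]^\theta\,dz$ and $J_1 \coloneq \frac{1}{|Q_{2\rho}^\lambda(z_0)|}\iints{Q_{2\rho}^\lambda(z_0)\cap\Phi(\eta\Lambda)} H(z,|F|)\,dz$. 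Using $\Lambda \leq (K+1)\lambda^p$ I pick $\eta$ so small that $c(K+1)\eta \leq \tfrac{1}{4}$, absorb the $c\eta\Lambda$ contribution into the left-hand side, and set $c^{-1} \coloneq \eta$ to match the threshold in the statement.

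The decisive maneuver, which produces the prefactor $\Lambda^{1-\theta}$, is a Jensen-type bound: by concavity of $x \mapsto x^\theta$ and \eqref{cond : p-phase condition}$_2$ at $\sigma=2\rho$,
$$
\miint{Q_{2\rho}^\lambda(z_0)} [H(z,|Du|)]^\theta\,dz \leq \left(\miint{Q_{2\rho}^\lambda(z_0)} H(z,|Du|)\,dz\right)^\theta \leq \lambda^{p\theta},
$$
so $I_1 \leq \lambda^{p\theta}$ and consequently $I_1^{1/\theta} = I_1^{(1-\theta)/\theta}\cdot I_1 \leq \lambda^{p(1-\theta)} I_1 \leq \Lambda^{1-\theta} I_1$. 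Multiplying the refined bound for $\lambda^p$ by $|Q_\rho^\lambda(z_0)|$ and absorbing the constant ratio $|Q_\rho^\lambda(z_0)|/|Q_{2\rho}^\lambda(z_0)| = 2^{-(n+2)}$ into $c$ converts the $I_1^{1/\theta}$-term into $c\Lambda^{1-\theta}\iints{Q_{2\rho}^\lambda(z_0)\cap\Psi(c^{-1}\Lambda)}[H(z,|Du|)]^\theta\,dz$ and the $J_1$-term into $c\iints{Q_{2\rho}^\lambda(z_0)\cap\Phi(c^{-1}\Lambda)} H(z,|F|)\,dz$, which is exactly the claimed inequality. The only real subtlety is quantitative bookkeeping: verifying that $\eta$, $\theta_0$, and the final constant $c$ depend solely on $\operatorname{data}$, which is automatic since the inputs (the reverse Hölder lemma, the value of $K$, and the comparability $\lambda^p\approx\Lambda$) each yield constants of precisely this dependence.
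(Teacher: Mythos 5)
Your proof is correct and follows essentially the same route as the paper, which defers to the standard argument of \cite[Lemma 3.9]{Wontae2024}: reduce the left-hand side to $\lambda^p$ via the stopping-time conditions \eqref{cond : p-phase condition}$_2$--\eqref{cond : p-phase condition}$_3$, invoke the reverse H\"{o}lder inequality (Lemma \ref{lem : the reverse Holder inequality in the p-intrinsic case of bounded solution} or \ref{lem : the reverse Holder inequality in the p-intrinsic case of s}), split at the level $\eta\Lambda$ and absorb the small part using $\lambda^p\leq\Lambda\leq(K+1)\lambda^p$, and extract the factor $\Lambda^{1-\theta}$ by the Jensen step $I_1^{1/\theta}\leq\lambda^{p(1-\theta)}I_1$. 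All the quantitative dependencies you track (on $\operatorname{data}$, $K$, and the choice of $\theta_0$ in each of the two cases) are as claimed.
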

\subsection{The $(p,q)$-phase case} 
Let $u$ be a weak solution to \eqref{eq : the main equation} and assume that $G_{2\kappa \rho}^\lambda (z_0)\subset \Omega_T$ satisfies \eqref{cond : p,q-phase condition}. Furthermore, we assume either \eqref{cond : main assumption with infty} or \eqref{cond : main assumption with s}. By \eqref{cond : p,q-phase condition}$_1$, \eqref{cond : p,q-phase condition}$_2$ and \eqref{cond : p,q-phase condition}$_3$, we have
$$
\miint{G_{4\rho}^\lambda  (z_0)} [H_{z_0}(|Du|)+H_{z_0}(|F|)]\, dz < 4 a(z_0)\lambda^q,
$$ 
and hence
$$
\miint{G_{4\rho}^\lambda  (z_0)} [|Du|^q+|F|^q]\, dz < 4 \lambda^q.
$$
The following lemma is a $(p,q)$-intrinsic parabolic Poincar\'{e} inequality, and its proof is similar to that of \cite[Lemma 3.10]{Wontae2024}.
\begin{lemma}\label{lem : (p,q)-parabolic Poincare inequality}
    For $\sigma\in[2\rho,4\rho]$ and $\theta\in \left(\frac{q-1}{p},1\right]$, there exists a constant $c=c(n,p,q,L)>1$ such that
    $$
    \begin{aligned}
        \miint{G_\sigma^\lambda (z_0)} H_{z_0}\left(\frac{\Big|u-u_{G_\sigma^\lambda(z_0)}\Big|}{\lambda^{\frac{p-2}{2} }\sigma}\right)^\theta \, dz&\leq c\Lambda^{(2-p)\theta}\left(\miint{G_\sigma^\lambda (z_0)} [H_{z_0}(|Du|)]^\theta\, dz \right)^{p-1}\\
        &\quad + c\Lambda^{(2-p)\theta}\left(\miint{G_\sigma^\lambda (z_0)} H_{z_0}(|F|)\, dz \right)^{\theta(p-1)}.
    \end{aligned}
    $$
\end{lemma}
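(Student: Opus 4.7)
The strategy is to decompose $H_{z_0}^\theta$ into its two monomial pieces and apply the semi-parabolic Poincar\'{e} inequality of Lemma~\ref{lem : semi-Parabolic Poincare inequality} on $G_\sigma^\lambda(z_0) = Q_{R,\ell}(z_0)$ twice, once with $m = p$ and once with $m = q$, where $R = \lambda^{\frac{p-2}{2}}\sigma$ and $\ell = \frac{\lambda^p}{H_{z_0}(\lambda)}\sigma^2$. The key scaling identity is $\ell/R^2 = \lambda^2/\Lambda$. Since $\theta\leq 1$, subadditivity gives
$$
H_{z_0}\!\left(\frac{\bigl|u - u_{G_\sigma^\lambda(z_0)}\bigr|}{\lambda^{\frac{p-2}{2}}\sigma}\right)^{\!\theta} \leq \frac{\bigl|u - u_{G_\sigma^\lambda(z_0)}\bigr|^{\theta p}}{(\lambda^{\frac{p-2}{2}}\sigma)^{\theta p}} + a(z_0)^\theta\frac{\bigl|u - u_{G_\sigma^\lambda(z_0)}\bigr|^{\theta q}}{(\lambda^{\frac{p-2}{2}}\sigma)^{\theta q}},
$$
reducing the task to a separate estimate of each summand by Lemma~\ref{lem : semi-Parabolic Poincare inequality}.

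The first piece produced by Lemma~\ref{lem : semi-Parabolic Poincare inequality} in each application is the $|Du|^{\theta m}$ average, which is absorbed directly into $\miint{G_\sigma^\lambda(z_0)} H_{z_0}(|Du|)^\theta\,dz$ using $|Du|^{\theta p}\leq H_{z_0}(|Du|)^\theta$ for $m = p$ and $a(z_0)^\theta|Du|^{\theta q}\leq H_{z_0}(|Du|)^\theta$ for $m = q$, the factor $a(z_0)^\theta$ already sitting outside from the subadditivity step. For the second piece, I would invoke \eqref{cond : p,q-phase condition}$_1$ to replace $a(z)$ by $a(z_0)$ up to an absolute constant, and then apply H\"older's inequality. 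The constraint $\theta > (q-1)/p$ gives $\theta p > q - 1 > p - 1$, which legitimizes bounds such as
$$
\miint{G_\sigma^\lambda(z_0)}|Du|^{p-1}\,dz \leq \Big(\miint{G_\sigma^\lambda(z_0)}|Du|^{\theta p}\,dz\Big)^{\frac{p-1}{\theta p}},
$$
and analogous estimates for $a(z_0)|Du|^{q-1}$, $|F|^{p-1}$ and $a(z_0)|F|^{q-1}$ in terms of $|Du|^{\theta q}$, $|F|^p$, $|F|^q$ averages respectively.

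The prefactor $\Lambda^{(2-p)\theta}$ is extracted from $(\lambda^2/\Lambda)^{\theta m}$ via $\lambda^p \leq \Lambda$, which yields $(\lambda^2/\Lambda)^{\theta m} \leq \Lambda^{(2-p)\theta}\lambda^{(m-p)\theta}$; the residual $\lambda^{(m-p)\theta}$ combines with $a(z_0)$-powers produced by H\"older and is absorbed using the $(p,q)$-intrinsic bound $a(z_0)\lambda^{q-p}\geq 1$, a direct consequence of \eqref{cond : p,q-phase condition}$_1$ and $K\geq 1$. When H\"older naturally produces the exponent $(q-1)p/q$ in place of $p-1$ (in the cross contributions $m = p$ paired with the $|Du|^{q-1}$ monomial and $m = q$ paired with $|Du|^{p-1}$), I would interpolate using the energy bound
$$
\miint{G_\sigma^\lambda(z_0)} H_{z_0}(|Du|)\,dz \leq c\Lambda,
$$
inherited from \eqref{cond : p,q-phase condition}$_3$ together with the comparability of $a(\cdot)$ from \eqref{cond : p,q-phase condition}$_1$, splitting the exponent as $(p-1)$ plus a residual that is absorbed into further $\Lambda$-factors; the parallel computation for the source term yields $\theta(p-1)$ on the $F$-side. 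The main technical obstacle is exactly this combined bookkeeping: four different exponent profiles (two choices of $m$ crossed with the two monomials of $H_{z_0}$) must all collapse into the single, uniform prefactor $\Lambda^{(2-p)\theta}$, and verifying this collapse is where the $(p,q)$-intrinsic geometry is essentially used, following the scheme of \cite[Lemma~3.10]{Wontae2024}.
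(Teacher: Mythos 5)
Your proposal is correct and follows essentially the same route as the paper's intended proof (the paper itself only defers to \cite[Lemma~3.10]{Wontae2024}): subadditivity of $\varkappa\mapsto\varkappa^\theta$ to split $H_{z_0}^\theta$ into its two monomials, two applications of Lemma~\ref{lem : semi-Parabolic Poincare inequality} with $m=p$ and $m=q$ on $G_\sigma^\lambda(z_0)$ using $\ell/R^2=\lambda^2/\Lambda$, H\"{o}lder's inequality via $\theta p>q-1$, the comparability of $a(\cdot)$, the energy bound $\miint{G_\sigma^\lambda(z_0)} H_{z_0}(|Du|)\,dz\leq c\Lambda$, and the phase inequality $a(z_0)\lambda^{q-p}\gtrsim 1$ from \eqref{cond : p,q-phase condition}. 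The only step to make explicit is that the $|Du|^{\theta m}$ averages are not bounded by the right-hand side ``directly'': since $p-1<1$, one must additionally write $\miint{G_\sigma^\lambda(z_0)}[H_{z_0}(|Du|)]^\theta\,dz\leq \bigl(\miint{G_\sigma^\lambda(z_0)}[H_{z_0}(|Du|)]^\theta\,dz\bigr)^{p-1}(c\Lambda^{\theta})^{2-p}$ via the energy bound, an ingredient you already have in hand for the cross terms.
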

Also, as in \cite[Lemma 3.11]{Wontae2024}, by replacing $H_{z_0}(\varkappa)^\theta$ with $\varkappa^{\theta p}$, we obtain the following result.
\begin{lemma}
    For $\sigma \in [2\rho,4\rho]$ and $\theta \in \left(\frac{q-1}{p},1\right]$, there exists a constant $c=c(n,p,q,L)>1$ such that
    $$
    \begin{aligned}
        \miint{G_\sigma^\lambda(z_0)} \left(\frac{\Big|u-u_{G_\sigma^\lambda(z_0)}\Big|}{\lambda^{\frac{p-2}{2} }\sigma}\right)^{\theta p}\, dz&\leq c\lambda^{(2-p)\theta p}\left(\miint{G_\sigma^\lambda(z_0)} |Du|^{\theta p}\, dz\right)^{p-1}\\
        &\quad +c\lambda^{(2-p)\theta p}\left(\miint{G_\sigma^\lambda(z_0)} |F|^{p}\, dz\right)^{\theta(p-1)}.
    \end{aligned}
    $$
\end{lemma}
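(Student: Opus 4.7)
The plan is to follow the template of the proof of Lemma~\ref{lem : (p,q)-parabolic Poincare inequality} (cf.~\cite[Lemma~3.11]{Wontae2024}), performing the formal substitution $H_{z_0}(\varkappa)^\theta\leadsto \varkappa^{\theta p}$ announced in the remark preceding the statement. First I would apply the semi-parabolic Poincar\'e inequality (Lemma~\ref{lem : semi-Parabolic Poincare inequality}) on the $(p,q)$-intrinsic cylinder $G_\sigma^\lambda(z_0)=B_\sigma^\lambda(x_0)\times J_\sigma^\lambda(t_0)$ with $m=p$, noting that
\[
R=\lambda^{\frac{p-2}{2}}\sigma,\qquad \ell=\tfrac{\lambda^p}{H_{z_0}(\lambda)}\sigma^2,\qquad \tfrac{\ell}{R^2}=\tfrac{\lambda^2}{H_{z_0}(\lambda)}\le \lambda^{2-p},
\]
the last inequality being a consequence of $H_{z_0}(\lambda)\ge \lambda^p$. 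This yields an estimate of the shape
\[
\miint{G_\sigma^\lambda(z_0)}\Big(\tfrac{|u-u_{G_\sigma^\lambda(z_0)}|}{R}\Big)^{\theta p}dz \le c\miint{G_\sigma^\lambda(z_0)}|Du|^{\theta p}dz + c\Big(\tfrac{\lambda^2}{H_{z_0}(\lambda)}\miint{G_\sigma^\lambda(z_0)}[|Du|^{p-1}+a(z)|Du|^{q-1}+|F|^{p-1}+a(z)|F|^{q-1}]dz\Big)^{\theta p},
\]
which already carries the expected outer prefactor $\lambda^{(2-p)\theta p}$.

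Next I would bound the complicated second term using H\"older's inequality, the local comparability $a(z)\le 2a(z_0)$ on $G_{4\rho}^\lambda(z_0)$ supplied by \eqref{cond : p,q-phase condition}$_1$, and the intrinsic energy bound \eqref{cond : p,q-phase condition}$_2$. The assumption $\theta>(q-1)/p$ legalises H\"older with exponent $\theta p/(q-1)\ge 1$ on the $q$-phase pieces; the spurious factor $a(z_0)^{\theta p}$ that this generates is absorbed by replacing the crude bound $\lambda^2/H_{z_0}(\lambda)\le\lambda^{2-p}$ with the sharper one $\lambda^2/H_{z_0}(\lambda)\le \lambda^{2-q}/a(z_0)$ (valid because $H_{z_0}(\lambda)\ge a(z_0)\lambda^q$). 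The $F$-contribution, regrouped by Jensen, produces the desired $(\miint|F|^{p})^{\theta(p-1)}$. The leftover term $c\miint|Du|^{\theta p}$ from the Poincar\'e step is then recast into the target form $c\lambda^{(2-p)\theta p}(\miint|Du|^{\theta p})^{p-1}$ by splitting $\miint|Du|^{\theta p}=(\miint|Du|^{\theta p})^{p-1}(\miint|Du|^{\theta p})^{2-p}$ and controlling the remaining $(2-p)$-power factor via Jensen together with \eqref{cond : p,q-phase condition}$_2$, namely $(\miint|Du|^{\theta p})^{1/\theta}\le\miint|Du|^{p}\le H_{z_0}(\lambda)$, once again cancelling excess $a(z_0)$ factors against $1/H_{z_0}(\lambda)$ as above.

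The main obstacle is exactly this bookkeeping in the $(p,q)$-phase: tracking how the $a(z_0)$-powers introduced by the $q$-phase H\"older steps cancel against the $1/H_{z_0}(\lambda)$ factor arising from the $(p,q)$-intrinsic time-scaling, so that only $\lambda^{(2-p)\theta p}$ survives as the prefactor on both resulting terms. Beyond this transcription of the previous lemma's proof, no new analytical ingredient is required.
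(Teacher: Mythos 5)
Your overall route is the one the paper intends (the paper gives no proof here, deferring to \cite[Lemma 3.11]{Wontae2024}): apply Lemma \ref{lem : semi-Parabolic Poincare inequality} with $m=p$ on $G_\sigma^\lambda(z_0)$, use $\ell/R^2=\lambda^2/H_{z_0}(\lambda)\le\lambda^{2-p}$, invoke H\"older with exponent $\theta p/(q-1)$ on the $q$-phase pieces (exactly where $\theta>(q-1)/p$ enters), use $a(z)\le 2a(z_0)$ from \eqref{cond : p,q-phase condition}$_1$, and cancel the resulting $a(z_0)$ against $H_{z_0}(\lambda)\ge a(z_0)\lambda^q$. Your treatment of the drift term is sound.

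However, the step recasting the leftover Poincar\'e term $c\,\miint{G_\sigma^\lambda(z_0)}|Du|^{\theta p}\,dz$ into $c\lambda^{(2-p)\theta p}\bigl(\miint{G_\sigma^\lambda(z_0)}|Du|^{\theta p}\,dz\bigr)^{p-1}$ would fail as written. That term carries no factor $\lambda^2/H_{z_0}(\lambda)$, so there is nothing for the excess to ``cancel against,'' and your chain
$$
\left(\miint{G_\sigma^\lambda(z_0)}|Du|^{\theta p}\,dz\right)^{2-p}\le\left(\miint{G_\sigma^\lambda(z_0)}|Du|^{p}\,dz\right)^{\theta(2-p)}\le H_{z_0}(\lambda)^{\theta(2-p)}
$$
lands on the wrong quantity: in the $(p,q)$-phase $H_{z_0}(\lambda)$ is comparable to $a(z_0)\lambda^q$, which may be arbitrarily larger than $\lambda^p$, so $H_{z_0}(\lambda)^{\theta(2-p)}\not\le c\lambda^{(2-p)\theta p}$. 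The correct ingredient is the sharper consequence of \eqref{cond : p,q-phase condition} displayed immediately before the lemma, namely $\miint{G_{4\rho}^\lambda(z_0)}[|Du|^q+|F|^q]\,dz<4\lambda^q$, which by Jensen gives $\miint{G_\sigma^\lambda(z_0)}|Du|^{\theta p}\,dz\le c\lambda^{\theta p}$ and hence the needed bound $\bigl(\miint{G_\sigma^\lambda(z_0)}|Du|^{\theta p}\,dz\bigr)^{2-p}\le c\lambda^{(2-p)\theta p}$. The same $\lambda^q$-bound (for both $|Du|^q$ and $|F|^q$) is also what reduces the exponents $q-1$ produced by the $q$-phase H\"older steps down to $p-1$ and $\theta(p-1)$; your sketch leaves that interpolation implicit for the $a(z)|F|^{q-1}$ piece. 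With these repairs the argument goes through.
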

Next, consider the quadratic term
$$
S(u,G_\rho^\lambda(z_0)) = \sup_{J^\lambda_\rho (t_0)}\dashint_{B_\rho^\lambda (x_0)} \frac{\Big|u-u_{G_\rho^\lambda(z_0)}\Big|^2}{\left(\lambda^{\frac{p-2}{2}}\rho\right)^2}\, dx
$$
in a $(p,q)$-intrinsic cylinder. The proofs of the following lemmas can be found in \cite{Wontae2024}.
\begin{lemma}
    There exists a constant $c=c(n,p,q,\nu,L)>1$ such that
    $$
    S(u,G_{2\rho}^\lambda (z_0)) \leq c \lambda^2.
    $$
\end{lemma}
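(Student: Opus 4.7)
The plan is to mimic the Caccioppoli-plus-iteration strategy used in Lemmas \ref{lem : estimate of S in p-intrinsic cylinder of bounded solution} and \ref{lem : estimate of S in p-intrinsic cylinder of s}, but transferred from the $p$-intrinsic cylinder $Q_\rho^\lambda(z_0)$ to the $(p,q)$-intrinsic cylinder $G_\rho^\lambda(z_0)$, whose time half-length is $\tau := \lambda^p \rho^2 / H_{z_0}(\lambda)$. For any $2\rho \leq \rho_1 < \rho_2 \leq 4\rho$, I apply Lemma \ref{lem : Caccioppoli inequality} with $R = \lambda^{(p-2)/2}\rho_2$, $r = \lambda^{(p-2)/2}\rho_1$, $\ell = \lambda^p\rho_2^2/H_{z_0}(\lambda)$ and $\tau = \lambda^p\rho_1^2/H_{z_0}(\lambda)$. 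The scaling identity
\[
\frac{1}{\tau}\;=\;\frac{H_{z_0}(\lambda)}{\lambda^2}\cdot\frac{1}{(\lambda^{(p-2)/2}\rho_1)^2}
\]
rewrites the supremum term on the left-hand side of Caccioppoli as $(H_{z_0}(\lambda)/\lambda^2)\,S(u, G_{\rho_1}^\lambda(z_0))$, which is precisely the quantity to be controlled.

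Next I estimate the three contributions on the right-hand side. For the $(p,q)$-difference quotient term, I apply the $(p,q)$-intrinsic parabolic Poincar\'e inequality (the first lemma of this subsection) with $\theta = 1$, combined with the energy bound $\miint_{G_{\rho_2}^\lambda(z_0)}[H_{z_0}(|Du|) + H_{z_0}(|F|)]\,dz \leq c\,H_{z_0}(\lambda)$ coming from \eqref{cond : p,q-phase condition}$_2$ and the comparability $a(z_0)/2 \leq a(z) \leq 2a(z_0)$ on $G_{4\rho}^\lambda(z_0)$; this yields a bound of size $c\,\rho_2^q(\rho_2-\rho_1)^{-q}\,H_{z_0}(\lambda)$. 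The source term is dominated by $c\,H_{z_0}(\lambda)$ directly via \eqref{cond : p,q-phase condition}$_2$.

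The main obstacle is the quadratic term $\frac{1}{\ell - \tau}\miint_{G_{\rho_2}^\lambda(z_0)}|u - u_{G_{\rho_2}^\lambda(z_0)}|^2\,dz$. Following the route suggested by \cite[Lemma 3.6]{Wontae2024} (already invoked in Lemmas \ref{lem : estimate of S in p-intrinsic cylinder of bounded solution} and \ref{lem : estimate of S in p-intrinsic cylinder of s}), I would split $|u - u|^2 = |u-u|^p\,|u-u|^{2-p}$ and apply H\"older's inequality to trade the deficit factor $|u-u|^{2-p}$ against $S(u, G_{\rho_2}^\lambda(z_0))^{(2-p)/2}$, while absorbing the $|u-u|^p$ factor into the $(p,q)$-intrinsic Poincar\'e estimate, which in turn is bounded by $c\,\lambda^p$ thanks to \eqref{cond : p,q-phase condition}$_2$. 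After multiplying by $\lambda^2/H_{z_0}(\lambda)$ to put everything on the scale of $S$, this contribution takes the form $c\,\rho_2^2(\rho_2^2-\rho_1^2)^{-1}\lambda\cdot S(u, G_{\rho_2}^\lambda(z_0))^{1/2}$. The delicate bookkeeping here is to check that every constant appearing can be absorbed into $c(n,p,q,\nu,L)$; this works because the $(p,q)$-Poincar\'e constant and the factor $2$ from the $a(\cdot)$-comparability introduce no further data dependence.

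Assembling the three estimates produces the iteration
\[
S(u, G_{\rho_1}^\lambda(z_0))\;\leq\;\tfrac{1}{2}\,S(u, G_{\rho_2}^\lambda(z_0)) + c\Bigl(\tfrac{\rho_2^q}{(\rho_2-\rho_1)^q} + \tfrac{\rho_2^4}{(\rho_2^2-\rho_1^2)^2}\Bigr)\lambda^2
\]
for all $2\rho \leq \rho_1 < \rho_2 \leq 4\rho$, the additive $\lambda^2$ arising from Young's inequality applied to the $S^{1/2}$-term. Invoking the standard iteration Lemma \ref{lem : a standard iteration lemma} on the interval $[2\rho, 4\rho]$ then delivers the claimed estimate $S(u, G_{2\rho}^\lambda(z_0)) \leq c\lambda^2$ with $c = c(n,p,q,\nu,L)$.
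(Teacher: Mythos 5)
Your argument is correct and is essentially the proof the paper relies on (the paper simply cites \cite{Wontae2024} for this lemma, whose proof is exactly the Caccioppoli--$(p,q)$-Poincar\'e--interpolation--iteration scheme you describe, mirroring the paper's own Lemmas \ref{lem : estimate of S in p-intrinsic cylinder of bounded solution} and \ref{lem : estimate of S in p-intrinsic cylinder of s}). Your observation that the $a(z_0)/2\leq a(z)\leq 2a(z_0)$ comparability from \eqref{cond : p,q-phase condition} replaces the $[a]_\alpha\rho^\alpha$-splitting and is what keeps the constant down to $c(n,p,q,\nu,L)$ is precisely the right point.
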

\begin{lemma}
    There exists a constant $c=c(n,p,q)>1$ such that for any $\theta \in \left(\frac{n}{n+2},1\right)$, 
    $$
    \begin{aligned}
        &\miint{G_{2\rho}^\lambda (z_0)}\left(\frac{\Big|u-u_{G_\rho^\lambda(z_0)}\Big|^p}{\left(2\lambda^{\frac{p-2}{2}}\rho\right)^p}+a(z)\frac{\Big|u-u_{G_\rho^\lambda(z_0)}\Big|^q}{\left(2\lambda^{\frac{p-2}{2}}\rho\right)^q}\right)\, dz\\
        &\qquad \leq c \Lambda^{1-\theta}\miint{G_{2\rho}^\lambda (z_0)}\left(\left[H_{z_0}\left(\frac{\Big|u-u_{G_\rho^\lambda(z_0)}\Big|}{2\lambda^{\frac{p-2}{2}}\rho}\right)\right]^\theta + [H_{z_0}(|Du|)]^\theta\right)\, dz.
    \end{aligned}
    $$
\end{lemma}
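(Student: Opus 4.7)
The plan is to follow the template of the $p$-intrinsic Lemma \ref{lem : estimate of the first term in Lemma 2.1 in p-intrinsic cylinder with bounded solution}, adapted to the $(p,q)$-intrinsic geometry. The crucial simplification here is that \eqref{cond : p,q-phase condition}$_1$ makes $a(\cdot)$ essentially constant on $G_{4\rho}^\lambda(z_0)$, so replacing $a(z)$ by $a(z_0)$ in the $q$-integral on the left-hand side costs only a universal constant. Consequently, the task reduces to separately bounding the pure $p$- and $q$-power integrals of $|u-u_{G_\rho^\lambda(z_0)}|/r$, where $r\coloneq 2\lambda^{\frac{p-2}{2}}\rho$.

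For each such integral, I would apply the Gagliardo--Nirenberg inequality (Lemma \ref{lem : Gagliardo-Nirenberg inequality}) slice-wise in $x$, choosing $(p_1,p_2,p_3,\vartheta)=(p,\theta p,2,\theta)$ in one case and $(p_1,p_2,p_3,\vartheta)=(q,\theta q,2,\theta)$ in the other. With these choices the exponent $\vartheta p_1/p_2$ equals $1$, and the admissibility condition $-n/p_1\leq \vartheta(1-n/p_2)-(1-\vartheta)n/p_3$ reduces in both cases to $\theta\geq n/(n+2)$, matching the stated range exactly. Integrating the resulting slice-wise inequalities in time over $J_{2\rho}^\lambda(t_0)$ and invoking the sup-in-time bound $S(u,G_{2\rho}^\lambda(z_0))\leq c\lambda^2$ from the previous lemma to control $\sup_t \dashint|u-u_{G_\rho^\lambda(z_0)}|^2/r^2\,dx$ by $c\lambda^2$, one obtains the slab estimate
$$
\miint{G_{2\rho}^\lambda(z_0)}\frac{|u-u_{G_\rho^\lambda(z_0)}|^p}{r^p}\,dz\leq c\lambda^{(1-\theta)p}\miint{G_{2\rho}^\lambda(z_0)}\Bigl[\frac{|u-u_{G_\rho^\lambda(z_0)}|^{\theta p}}{r^{\theta p}}+|Du|^{\theta p}\Bigr]\,dz,
$$
together with its direct $q$-analog carrying prefactor $\lambda^{(1-\theta)q}$.

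The final step is to repackage the prefactors via $\lambda^p\leq\Lambda$ and $a(z_0)\lambda^q\leq\Lambda$. For the $p$-term, $\lambda^{(1-\theta)p}=(\lambda^p)^{1-\theta}\leq\Lambda^{1-\theta}$. For the $q$-term, after multiplication by $a(z_0)$, splitting $a(z_0)=a(z_0)^\theta\cdot a(z_0)^{1-\theta}$ gives $a(z_0)^{1-\theta}\lambda^{(1-\theta)q}=(a(z_0)\lambda^q)^{1-\theta}\leq\Lambda^{1-\theta}$, leaving the factor $a(z_0)^\theta$ attached to the $\theta q$-power terms. Summing the two contributions and invoking the pointwise equivalence $\varkappa^{\theta p}+a(z_0)^\theta\varkappa^{\theta q}\sim [H_{z_0}(\varkappa)]^\theta$ produces the desired inequality.

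No serious obstacle is anticipated, since the near-constancy of $a(\cdot)$, the sup bound, and the Gagliardo--Nirenberg inequality are all in place from the preceding lemmas in this subsection. The main bookkeeping point is that $\vartheta=\theta$ is the natural choice that makes $\vartheta p_1/p_2=1$ in both applications of Lemma \ref{lem : Gagliardo-Nirenberg inequality}, and the threshold $\theta>n/(n+2)$ arises precisely as the admissibility requirement for both the $p$- and $q$-exponents simultaneously; any mismatch between $u_{G_\rho^\lambda(z_0)}$ and $u_{G_{2\rho}^\lambda(z_0)}$ is harmlessly absorbed by the triangle and Jensen inequalities.
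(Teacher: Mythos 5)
Your proposal is correct and follows essentially the same route as the paper's (outsourced to \cite{Wontae2024}) proof and its visible $p$-intrinsic analogues: replace $a(z)$ by $a(z_0)$ via \eqref{cond : p,q-phase condition}$_2$, apply Lemma \ref{lem : Gagliardo-Nirenberg inequality} slice-wise with $\vartheta p_1/p_2=1$ and $p_3=2$ so that the second factor is controlled by $S(u,G_{2\rho}^\lambda(z_0))\lesssim\lambda^2$, and then convert $\lambda^{(1-\theta)p}$ and $a(z_0)\lambda^{(1-\theta)q}=a(z_0)^\theta(a(z_0)\lambda^q)^{1-\theta}$ into $\Lambda^{1-\theta}$ times $[H_{z_0}(\cdot)]^\theta$. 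The identification of $\theta>\tfrac{n}{n+2}$ as the common admissibility threshold for both exponent choices, and the remark that the mean-value mismatch is absorbed by triangle and Jensen inequalities, are exactly the right bookkeeping.
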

\begin{lemma}\label{lem : level set in p,q-intrinsic cylinder}
    There exist constants $c=c(n,p,q,\nu,L)>1$ and $\theta_0=\theta_0(n,p,q)\in (0,1)$ such that for any $\theta\in (\theta_0,1)$,
    $$
    \miint{G_\rho^\lambda(z_0)} H_{z_0}(|Du|)\, dz \leq c\left(\miint{G_{2\rho}^\lambda (z_0)} [H_{z_0}(|Du|)]^\theta\, dz\right)^\frac{1}{\theta}+c\miint{G_{2\rho}^\lambda (z_0)} H_{z_0}(|F|)\,dz.
    $$
    Furthermore, we have
    $$
    \begin{aligned}
        \iints{G_{2\kappa \rho}^\lambda (z_0)} H(z,|Du|)\, dz&\leq c\Lambda^{1-\theta} \iints{G_{2\rho}^\lambda(z_0)\cap \Psi(c^{-1}\Lambda)} [H(z,|Du|)]^\theta \, dz\\
        &\quad+c\iints{G_{2\rho}^\lambda(z_0)\cap \Phi(c^{-1}\Lambda)} H(z,|F|) \, dz.
    \end{aligned}
    $$
\end{lemma}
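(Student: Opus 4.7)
The plan is to mirror the proofs of Lemmas \ref{lem : the reverse Holder inequality in the p-intrinsic case of bounded solution} and \ref{lem : the reverse Holder inequality in the p-intrinsic case of s}, replacing the $p$-intrinsic cylinder $Q_\rho^\lambda(z_0)$ throughout by the $(p,q)$-intrinsic cylinder $G_\rho^\lambda(z_0)$. I apply Lemma \ref{lem : Caccioppoli inequality} to the concentric pair $G_\rho^\lambda(z_0)\subset G_{2\rho}^\lambda(z_0)$ (whose temporal half-length is $\tfrac{\lambda^p}{H_{z_0}(\lambda)}(2\rho)^2$), producing three terms: the $L^p + a(z)L^q$ oscillation, the quadratic $\lambda^{p-2}S(u,G_{2\rho}^\lambda(z_0))$, and the source term. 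The oscillation is controlled by the last of the preceding unlabeled lemmas (which produces a factor $c\Lambda^{1-\theta}$ in front of the $L^\theta$-averages) followed by Lemma \ref{lem : (p,q)-parabolic Poincare inequality}, while the quadratic term is handled by combining $S(u,G_{2\rho}^\lambda(z_0))\le c\lambda^2$ with Lemma \ref{lem : Gagliardo-Nirenberg inequality} for parameters $p_1=2,\, p_2=\theta p,\, p_3=2,\, \vartheta=1/2$, exactly as in the $p$-intrinsic case. Young's inequality then yields a bound of the form $\miint{G_\rho^\lambda(z_0)} H_{z_0}(|Du|)\,dz \leq \tfrac12\lambda^p + c(\miint{G_{2\rho}^\lambda(z_0)}[H_{z_0}(|Du|)]^\theta\,dz)^{1/\theta} + c\miint{G_{2\rho}^\lambda(z_0)} H_{z_0}(|F|)\,dz$, and the $\tfrac12\lambda^p$ is absorbed via \eqref{cond : p,q-phase condition}$_3$. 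The comparability $H(z,\cdot)\sim H_{z_0}(\cdot)$ on $G_{4\rho}^\lambda(z_0)$ granted by \eqref{cond : p,q-phase condition}$_1$ then lets me convert all $H_{z_0}$-integrals into $H(z,\cdot)$-integrals.

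\textbf{Level-set part.} By \eqref{cond : p,q-phase condition}$_2$ one has $\iints{G_{2\kappa\rho}^\lambda(z_0)} H(z,|Du|)\,dz \leq c|G_{2\rho}^\lambda(z_0)|\Lambda$, so it suffices to bound $|G_{2\rho}^\lambda(z_0)|\Lambda$. Combining \eqref{cond : p,q-phase condition}$_3$, the comparability, and the first part of the lemma gives $\Lambda \leq c(\miint{G_{2\rho}^\lambda(z_0)}[H(z,|Du|)]^\theta\,dz)^{1/\theta} + c\miint{G_{2\rho}^\lambda(z_0)} H(z,|F|)\,dz$. Raising to the $\theta$-th power and multiplying by $|G_{2\rho}^\lambda(z_0)|$ yields $|G_{2\rho}^\lambda(z_0)|\Lambda^\theta \leq c\iints{G_{2\rho}^\lambda(z_0)}[H(z,|Du|)]^\theta\,dz + c|G_{2\rho}^\lambda(z_0)|^{1-\theta}(\iints{G_{2\rho}^\lambda(z_0)} H(z,|F|)\,dz)^\theta$. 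Splitting each integral at the threshold $c^{-1}\Lambda$, the portion of the first outside $\Psi(c^{-1}\Lambda)$ is at most $c\cdot c^{-\theta}\Lambda^\theta|G_{2\rho}^\lambda(z_0)|$, which is absorbed on the left after fixing $c$ large enough; the analogous split for the $F$-integral handles its complement. Multiplying through by $\Lambda^{1-\theta}$ and applying Young's inequality with exponents $1/(1-\theta),\,1/\theta$ to the remaining cross term $(|G_{2\rho}^\lambda(z_0)|\Lambda)^{1-\theta}(\iints{G_{2\rho}^\lambda(z_0)\cap\Phi(c^{-1}\Lambda)} H(z,|F|)\,dz)^\theta$ absorbs a further $\varepsilon|G_{2\rho}^\lambda(z_0)|\Lambda$ on the left and delivers the stated inequality.

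\textbf{Main obstacle.} The delicate part is the bookkeeping of the universal constant $c$ appearing in $\Psi(c^{-1}\Lambda)$ and $\Phi(c^{-1}\Lambda)$: it must be fixed after all absorption steps and must exceed an explicit threshold coming from the reverse-H\"older constant of the first part. It is also crucial to raise the estimate for $\Lambda$ to the $\theta$-th power before splitting, so that Young's inequality produces a linear rather than super-linear dependence on $\Lambda$ on the right-hand side. All the analytic input is already packaged by the lemmas preceding Lemma \ref{lem : level set in p,q-intrinsic cylinder} in this subsection together with Lemma \ref{lem : (p,q)-parabolic Poincare inequality}, so the remaining work is largely a careful consolidation.
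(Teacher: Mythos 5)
Your route is the intended one: the paper does not prove this lemma itself but defers to \cite{Wontae2024}, and the argument there is precisely the $(p,q)$-intrinsic analogue of Lemmas \ref{lem : the reverse Holder inequality in the p-intrinsic case of bounded solution} and \ref{lem : level set in p-intrinsic cylinder}, assembled from Lemma \ref{lem : Caccioppoli inequality}, Lemma \ref{lem : (p,q)-parabolic Poincare inequality} and the unlabeled lemmas preceding the statement, exactly as you describe. Two corrections to your bookkeeping, though. First, in the $(p,q)$-phase the quantity produced by Young's inequality and absorbed via \eqref{cond : p,q-phase condition}$_3$ is $\varepsilon\Lambda=\varepsilon H_{z_0}(\lambda)$, not $\tfrac12\lambda^p$: Lemma \ref{lem : (p,q)-parabolic Poincare inequality} and the oscillation lemma carry powers of $\Lambda$, and since $K\lambda^p\le \sup a\,\lambda^q$ in this phase, $\Lambda$ may be far larger than $\lambda^p$, so absorbing only $\lambda^p$ would leave the $a(z_0)\lambda^q$ portion uncontrolled; writing $\varepsilon\Lambda=\varepsilon\miint{G_\rho^\lambda(z_0)}[H(z,|Du|)+H(z,|F|)]\,dz$ with $\varepsilon$ small (small enough to survive the factor $2$ lost in passing between $H(z,\cdot)$ and $H_{z_0}(\cdot)$ via \eqref{cond : p,q-phase condition}$_1$) makes the absorption work. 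Second, in the level-set split the threshold constant inside $\Psi(\cdot^{-1}\Lambda)$ must be a \emph{new} constant, e.g.\ $\tilde c=(2c)^{1/\theta}$, chosen after the outer constant $c$ is known: with the same letter in both places your complement contribution reads $c\cdot c^{-\theta}\Lambda^\theta=c^{1-\theta}\Lambda^\theta\ge\Lambda^\theta$, which cannot be absorbed no matter how large $c$ is taken. Both are standard repairs and do not affect the viability of the proof.
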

\section{Proof of the main results}\label{section 5}
In this section, we prove Theorems \ref{thm : main theorem for infty} and \ref{thm : main theorem for s<infty}. First, we construct a Vitali type covering for the collection of intrinsic cylinders defined in Section \ref{section 3}. Thereafter, using this, we complete the proof of Theorems \ref{thm : main theorem for infty} and \ref{thm : main theorem for s<infty}.
\subsection{Vitali type covering argument}\label{subsection 5.1}
For each $w\in \Psi(\Lambda,r_1)$, we consider
$$
\mathcal{Q}(w)\coloneq \begin{cases}
    Q_{2\varrho_w}^{\lambda_w} (w)\quad &\text{if \eqref{case : p-phase} holds,}\\
    G_{2\varsigma_w}^{\lambda_w} (w)\quad &\text{if \eqref{case : p,q-phase} holds,}
\end{cases}
$$
where $\lambda_w$, $\varrho_w$ and $\varsigma_w$ are defined in Section \ref{section 3}. Denote $\ell_w$ as
$$
\ell_w=\begin{cases}
    2\varrho_{w}\quad &\text{if \eqref{case : p-phase} holds,}\\
    2\varsigma_{w}\quad &\text{if \eqref{case : p,q-phase} holds.}
\end{cases}
$$
By following the same argument as in \cite[Subsection 4.2]{Wontae2024}, we obtain a countable collection $\mathcal{G}$ of pairwise disjoint cylinders in $\mathcal{F}\coloneq \{\mathcal{Q}(w): w\in \Psi(\Lambda,r_1)\}$, where $\mathcal{G}$ satisfies the following two conditions:
\begin{itemize}
    \item For each $\mathcal{Q}(z_1)\in \mathcal{F}$, there exists $\mathcal{Q}(z_2)\in \mathcal{G}$ such that
    $$
    \mathcal{Q}(z_1)\cap \mathcal{Q}(z_2)\neq \emptyset.
    $$
    \item For such points $z_1$ and $z_2$, we get
    \begin{equation}\label{eq : comparison of ell_1 and ell_2}
    \ell_{z_1}\leq 2 \ell_{z_2}.
    \end{equation}
\end{itemize}
Then, we only need to prove that for such points $z_1$ and $z_2$,
\begin{equation}\label{eq : covering}
\mathcal{Q}(z_1) \subset \kappa \mathcal{Q}(z_2).
\end{equation}
For this, we want a comparison condition between $\lambda_{z_1}$ and $\lambda_{z_2}$. Indeed, referring to \cite[Subsetion 6.1]{kim2025boundedsolutionsinterpolativegap}, we get
\begin{equation}\label{eq : a comparison condition between lambda_ z_1 and lambda_ z_2}
(4K)^{-\frac{1}{p}}\lambda_{z_1}\leq \lambda_{z_2}\leq (4K)^{\frac{1}{p}}\lambda_{z_1}.
\end{equation}
We show that \eqref{eq : covering} is satisfied in all four possible cases:
\begin{enumerate}[label = (\roman*)]
    \item\label{case : two p-intrinsic cylinders} $\mathcal{Q}(z_2)=Q_{\ell_{z_2}}^{\lambda_{z_2}}(z_2)$\quad and\quad $\mathcal{Q}(z_1)=Q_{\ell_{z_1}}^{\lambda_{z_1}}(z_1)$,
    \item\label{case : two p,q-intrinsic cylinder} $\mathcal{Q}(z_2)=G_{\ell_{z_2}}^{\lambda_{z_2}}(z_2)$\quad and\quad $\mathcal{Q}(z_1)=G_{\ell_{z_1}}^{\lambda_{z_1}}(z_1)$,
    \item\label{case : one p,q-intrinsic cylinder and one p-intrinsic cylinder} $\mathcal{Q}(z_2)=G_{\ell_{z_2}}^{\lambda_{z_2}}(z_2)$\quad and\quad $\mathcal{Q}(z_1)=Q_{\ell_{z_1}}^{\lambda_{z_1}}(z_1)$,
    \item\label{case : one p-intrinsic cylinder and one p,q-intrinsic cylinder} $\mathcal{Q}(z_2)=Q_{\ell_{z_2}}^{\lambda_{z_2}}(z_2)$\quad and\quad $\mathcal{Q}(z_1)=G_{\ell_{z_1}}^{\lambda_{z_1}}(z_1)$.
\end{enumerate}
To prove this, we denote $z_1=(x_1,t_1)$ and $z_2=(x_2,t_2)$ for $x_1,\, x_2\in \Omega$ and $t_1,\,t_2\in (0,T)$. First, we prove the spatial inclusion. Since in all cases, the spatial part of $\mathcal{Q}(z_i)$ $(i=1,\,2)$ is the same as $B_{\ell_{z_i}}^{\lambda_{z_i}}(x_i)$, we only need to show that $B_{\ell_{z_1}}^{\lambda_{z_1}}(x_1)\subset \kappa B_{\ell_{z_2}}^{\lambda_{z_2}}(x_2)$. Indeed, for any $x\in B_{\ell_{z_1}}^{\lambda_{z_1}}(x_1)$, it follows from \eqref{eq : comparison of ell_1 and ell_2} and \eqref{eq : a comparison condition between lambda_ z_1 and lambda_ z_2} that
$$
\begin{aligned}
|x-x_2|&\leq |x-x_1|+|x_1-x_2|\leq 2\ell_{z_1}\lambda_{z_1}^{\frac{p-2}{2}}+\ell_{z_2}\lambda_{z_2}^{\frac{p-2}{2}}\\
&\leq 4(4K)^{\frac{2-p}{2p}}\ell_{z_2}\lambda_{z_2}^{\frac{p-2}{2}}+\ell_{z_2}\lambda_{z_2}^{\frac{p-2}{2}}.
\end{aligned}
$$
Since $\frac{2n}{n+2}< p$ implies $\frac{1}{p}-\frac{1}{2}< \frac{1}{n}<1$, we get
$$
|x-x_2|\leq 4(4K)^{\frac{1}{p}-\frac{1}{2}}\ell_{z_2}\lambda_{z_2}^{\frac{p-2}{2}}+\ell_{z_2}\lambda_{z_2}^{\frac{p-2}{2}}< 17K\ell_{z_2}\lambda_{z_2}^{\frac{p-2}{2}}.
$$
Hence, $B_{\ell_{z_1}}^{\lambda_{z_1}}(x_1)\subset 17K B_{\ell_{z_2}}^{\lambda_{z_2}}(x_2) \subset \kappa B_{\ell_{z_2}}^{\lambda_{z_2}}(x_2)$.

Now, we prove the time inclusion in each case.

\textit{Case $\rm{\ref{case : two p-intrinsic cylinders}}$}. For any $\tau\in I_{\ell_{z_1}}(t_1)$, we have
$$
|\tau-t_2|\leq |\tau - t_1|+|t_1-t_2|\leq 2\ell_{z_1}^2+\ell_{z_2}^2 \leq 9\ell_{z_2}^2<(4\ell_{z_2})^2,
$$
and hence $I_{\ell_{z_1}}(t_1)\subset 4I_{\ell_{z_2}}(t_2)$.

\textit{Case $\rm{\ref{case : two p,q-intrinsic cylinder}}$}. For any $\tau\in J^{\lambda_{z_1}}_{\ell_{z_1}}(t_1)$, we have
$$
|\tau-t_2|\leq |\tau - t_1|+|t_1-t_2|\leq 2\frac{\lambda_{z_1}^p}{\Lambda}\ell_{z_1}^2+\frac{\lambda_{z_2}^p}{\Lambda}\ell_{z_2}^2 \leq (32K+1)\frac{\lambda_{z_2}^p}{\Lambda}\ell_{z_2}^2<\frac{\lambda_{z_2}^p}{\Lambda}(6K\ell_{z_{z_2}})^2,
$$
and hence $J^{\lambda_{z_1}}_{\ell_{z_1}}(t_1)\subset 6K J^{\lambda_{z_2}}_{\ell_{z_2}}(t_2)$.

\textit{Case $\rm{\ref{case : one p,q-intrinsic cylinder and one p-intrinsic cylinder}}$}. In this case, since $\displaystyle K\lambda_{z_1}^p\geq \sup_{Q_{10\varrho_{z_1}}(z_1)} a(\cdot)\lambda_{z_1}^q$, we see from \eqref{eq : a comparison condition between lambda_ z_1 and lambda_ z_2} that
$$
1=\frac{2\lambda_{z_2}^p}{2\lambda_{z_2}^p}\leq \frac{8K\lambda_{z_2}^p}{2\lambda_{z_1}^p} \leq \frac{8K\lambda_{z_2}^p}{\lambda_{z_1}^p + K^{-1}a(z_1)\lambda_{z_1}^q}\leq \frac{8K^2 \lambda_{z_2}^p}{\Lambda}.
$$
Thus, for any $\tau\in I_{\ell_{z_1}}(t_1)$, we have
$$
|\tau-t_2|\leq |\tau - t_1|+|t_1-t_2|\leq 2\ell_{z_1}^2+\frac{\lambda_{z_2}^p}{\Lambda}\ell_{z_2}^2 \leq (64K^2+1)\frac{\lambda_{z_2}^p}{\Lambda}\ell_{z_2}^2<\frac{\lambda_{z_2}^p}{\Lambda}(10K\ell_{z_2})^2,
$$
and hence $I_{\ell_{z_1}}(t_1)\subset 10K J^{\lambda_{z_2}}_{\ell_{z_2}}(t_2)$.

\textit{Case $\rm{\ref{case : one p-intrinsic cylinder and one p,q-intrinsic cylinder}}$}. For any $\tau\in J^{\lambda_{z_1}}_{\ell_{z_1}}(t_1)$, we obtain from \eqref{eq : comparison of ell_1 and ell_2} that
$$
|\tau-t_2|\leq |\tau - t_1|+|t_1-t_2|\leq 2\frac{\lambda_{z_1}^p}{\Lambda}\ell_{z_1}^2+\ell_{z_2}^2 \leq 9\ell_{z_2}^2<(4\ell_{z_2})^2,
$$
and hence $J^{\lambda_{z_1}}_{\ell_{z_1}}(t_1)\subset 4 I_{\ell_{z_2}}(t_2)$. Therefore, we conclude \eqref{eq : covering}.
\subsection{Proof of Theorems \ref{thm : main theorem for infty} and \ref{thm : main theorem for s<infty}}\label{subsection 5.2}
We denote the intrinsic cylinders in the countable pairwise disjoint collection $\mathcal{G}$ by
$$
\mathcal{Q}_k \equiv \mathcal{Q}_k(z_k)\quad (k\in\mathbb{N})
$$
for any $z_k\in \Psi(\Lambda,r_1)$. Using Lemmas \ref{lem : level set in p-intrinsic cylinder} and \ref{lem : level set in p,q-intrinsic cylinder}, we get
$$
    \begin{aligned}
        \iints{\kappa \mathcal{Q}_k} H(z,|Du|)\, dz&\leq c\Lambda^{1-\theta} \iints{\mathcal{Q}_k\cap \Psi(c^{-1}\Lambda)} [H(z,|Du|)]^\theta \, dz\\
        &\quad +c\iints{\mathcal{Q}_k\cap \Phi(c^{-1}\Lambda)} H(z,|F|)\, dz
    \end{aligned}
$$
for any $k\in\mathbb{N}$, where $c=c(\operatorname{data})>1$ and $\theta=\frac{\theta_0+1}{2}$. Here, $$
\theta_0=\begin{cases}
    \theta_0(n,p,q)\qquad & \text{if \eqref{cond : p-phase condition}$_1$ and \eqref{cond : main assumption with infty} hold},\\
    \theta_0(n,p,q,s)\qquad & \text{if \eqref{cond : p-phase condition}$_1$ and \eqref{cond : main assumption with s} hold},\\
    \theta_0(n,p,q)\qquad & \text{if \eqref{cond : p,q-phase condition}$_1$ holds}.\\
\end{cases}
$$  
Using the Vitali type covering argument and Fubini's theorem as in \cite[Subsection 4.3]{Wontae2024}, we deduce that for any $\varepsilon\in (0,\varepsilon_0)$,
$$
\miint{Q_r (z_0)} [H(z,|Du|)]^{1+\varepsilon}\, dz\leq c\Lambda_0^{\varepsilon}\miint{Q_{2r}(z_0)} H(z,|Du|)\, dz+\miint{Q_{2r}(z_0)} [H(z,|F|)]^{1+\varepsilon}\, dz,
$$
where $c=c(\operatorname{data})>1$ and $\varepsilon_0=\varepsilon_0(\operatorname{data})\in (0,1)$. Here, $\Lambda_0$ is defined in \eqref{def : lambda_0 and Lambda_0}. Since $\lambda_0\geq 1$ and $p\leq q$, we have $\Lambda_0^\varepsilon\leq c\lambda_0^{\varepsilon q}$ for some $c=c(\operatorname{data},\|a\|_{L^\infty(\Omega_T)})$. Thus, by the definition of $\lambda_0$, we obtain
$$
\begin{aligned}
    \Lambda_0^{\varepsilon}\miint{Q_{2r}(z_0)} H(z,|Du|)\, dz&\leq c\left(\miint{Q_{2r}(z_0)} H(z,|Du|)\, dz\right)^{1+\frac{2q\varepsilon}{p(n+2)-2n}}\\
    &\quad +c\left(\miint{Q_{2r}(z_0)} [H(z,|F|)+1]^{1+\varepsilon}\, dz\right)^{\frac{2q}{p(n+2)-2n}}.
\end{aligned}
$$
Combining the above inequalities, we complete the proofs of Theorems \ref{thm : main theorem for infty} and \ref{thm : main theorem for s<infty}.\qed

\bibliographystyle{abbrv}
\bibliography{ref}{}
\end{document}